\documentclass[11pt, leqno]{amsart}

\makeatletter
\let\c@author\relax
\makeatother

\usepackage[backend=biber, citestyle=numeric-comp, bibstyle=trad-abbrv, url=false, doi=false, isbn=false]{biblatex}

\usepackage{fancyhdr}
\usepackage{marginnote}
\usepackage{todonotes}
\usepackage{amsmath}
\usepackage{hyperref}
\usepackage{comment}
\usepackage{soul}
\usepackage{mathtools}
\usepackage{amsfonts}
\usepackage{amsthm}
\usepackage{amssymb}
\usepackage{color}
\usepackage{comment}
\usepackage{enumerate}
\usepackage{dsfont}
\usepackage{geometry}
\usepackage{scalerel}[2014/03/10]
\usepackage[usestackEOL]{stackengine}
\newgeometry{tmargin=2.8cm, bmargin=3.5cm, lmargin=1.7cm, rmargin=1.7cm}
\mathtoolsset{showonlyrefs}

\definecolor{darkgreen}{rgb}{0.00, 0.50, 0.00}

\setlength\bibitemsep{0\itemsep}

\addbibresource{bib.bib}

\theoremstyle{plain}
\newtheorem{theo}{\bf Theorem}[section]
 
\newtheorem{coro}[theo]{\bf Corollary}
\newtheorem{lem}[theo]{\bf Lemma}

\newtheorem{prop}[theo]{\bf Proposition}

\theoremstyle{definition}
\newtheorem{ex}[theo]{\bf Example} 
\numberwithin{equation}{section}
\newtheorem{rem}[theo]{\bf Remark} 

\def\R{{\mathbb{R}}}

\def\N{{\mathbb{N}}}
\def\rn{{\mathbb{R}^{n}}}

\def\Rn{{\rn}}
\def\A{{A}}

\def\sob{{W^{1, A}}(\Omega)}

\def\sobzP{{V_0^{1, \Phi}(\Omega)}}

\def\sobB{{W^{1, B}}(\Omega)}

\def\vk{{\kappa}}
\def\d{{\,{\rm d}}}
\def\Cf{{E}}
\def\Df{{F}}
\def\Udel{{S_{\delta}}}
\def\UUU{{U}}

\def\avint{\,\ThisStyle{\ensurestackMath{%
  \stackinset{c}{.2\LMpt}{c}{.5\LMpt}{\SavedStyle-}{\SavedStyle\phantom{\int}}}%
  \setbox0=\hbox{$\SavedStyle\int\,$}\kern-\wd0}\int}

\title{Composition operators in Orlicz-Sobolev spaces}

\author {Michał Borowski \& Andrea Cianchi}
\address{Michał Borowski, Institute of Applied Mathematics and Mechanics,
University of Warsaw, ul. Banacha 2, 02-097 Warsaw, Poland}
\email{m.borowski@mimuw.edu.pl}

\address{Andrea Cianchi, Dipartimento di Matematica e Informatica \lq\lq U. Dini"\\
Universit\`a di Firenze,
Viale Morgagni 67/a,
50134 Firenze,
Italy} \email{andrea.cianchi@unifi.it}

\subjclass[2020]{46E35, 47H30 46E30 }
\keywords{{Composition operators, Orlicz-Sobolev spaces, Anisotropic Orlicz-Sobolev spaces, Sobolev inequalities}}

\begin{document}

\begin{abstract} 
{The continuity of the Nemytskii operator between Orlicz-Sobolev spaces is investigated. Natural Orlicz-Sobolev versions of classical results for standard Sobolev are established. The results presented not only extend the latter, but also improve them 
 in borderline situations. Anisotropic   Orlicz-Sobolev spaces are included in our analysis. The results offered for this class of spaces are new even for customary anisotropic Sobolev spaces.}
\end{abstract}
\maketitle
\section{Introduction}
 Given an open set $\Omega \subset \rn$, $n \in\N$, and a function $f : \R \to \R$, the composition operator $T_f$ is defined as 
$$T_f(u)(x) = f(u(x)) \quad \text{for $x\in \Omega$,}$$
for a function $u : \Omega \to \R$. 

The nonlinear operator $T_f$ is also called the superposition operator or Nemytskii operator in the literature. Its action on various function spaces has been investigated, for instance,  in ~\cite{MarMiz, MarMiz2, BL, HardyThompson, BV, Bourbaud1, Bourbaud2, BCS, BMS, Moussai, Dinca}. In particular, information on the continuity of $T_f$ in Sobolev type spaces is of use in the theory of Partial Differential Equations and the Calculus of Variations.

A classical result from \cite{MarMiz2, MarMiz} asserts that, if $f$ is Lipschitz continuous and $|\Omega|<\infty$, then $T_f$ maps the Sobolev space $W^{1,p}(\Omega)$ into itself, and the operator
\begin{align}
    \label{jan21}
    T_f : W^{1,p}(\Omega) \to W^{1,p}(\Omega)
\end{align}
is continuous for every $p \in [1, \infty]$. Here, $|\Omega|$ denotes the Lebesgue measure of $\Omega$. The same conclusion holds if $|\Omega|=\infty$, provided that $f(0)=0$.
\\ A version of this result holds under the weaker assumption that $f$ is just locally Lipschitz continuous, and its derivative admits a power-type estimate.  However, in this case, only sufficiently regular domains $\Omega$ are admissible, and the operator
\begin{align}
    \label{jan22}
    T_f : W^{1,p}(\Omega) \to W^{1,q}(\Omega)
\end{align}
is continuous for some  range of exponents $q<p$, depending on $p, n$ and on the power which bounds the growth of $|f'|$. The latter result rests upon the Sobolev embedding theorem for the space $W^{1,p}(\Omega)$, which dictates the regularity required for $\Omega$. For the same reason, such regularity can be dispensed with when dealing with the homogeneous Sobolev space $V^{1,p}_0(\Omega)$ of functions vanishing on $\partial \Omega$, equipped with the gradient norm in $L^p(\Omega)$.

In the present paper, we are concerned with the continuity of the operator $T_f$ in the more general class of Orlicz-Sobolev spaces $W^{1,A}(\Omega)$. They are built on arbitrary Young functions $A: [0, \infty) \to [0, \infty]$, of which the powers $t^p$, for $p\geq 1$, are special instances. In analogy with~\eqref{jan21}, the continuity of the operator 
\begin{align}
    \label{jan23}
    T_f : W^{1,A}(\Omega) \to W^{1,A}(\Omega)
\end{align}
is established for any Young function $A$. This is our first result, which is the subject of Theorem~\ref{theo-conA}. A sharp Orlicz-Sobolev counterpart of \eqref{jan22} is also offered. Namely, under a bound for $|f'|$ in terms of a non-decreasing continuous function $\Cf$, the operator
\begin{align}
    \label{jan24}
    T_f : W^{1,A}(\Omega) \to W^{1,B}(\Omega)
\end{align}
is shown to be continuous for functions $B$ fulfilling a proper growth condition, depending on $A, \Cf$, and $n$.
This condition is prescribed by the optimal embedding theorem for Orlicz-Sobolev spaces  and takes a different form, depening  on whether $|\Omega|<\infty$ or $|\Omega|=\infty$. These two cases are dealt with in    Theorem~\ref{theo:conB1} and Theorem ~\ref{theo:conB2}, respectively. The regularity assumptions on $\Omega$ needed for \eqref{jan24} are the same as for \eqref{jan22}, and, as shown in Theorem \ref{theo:conB0}, can be dropped if  the spaces $W^{1,A}(\Omega)$ and $ W^{1,B}(\Omega)$ are replaced with their homogeneous versions $V^{1,A}_0(\Omega)$ and $ V^{1,B}_0(\Omega)$.

The continuity of $T_f$ between homogeneous anisotropic Orlicz-Sobolev spaces $V^{1,\Phi}_0(\Omega)$ is analyzed as well. They are defined via   $n$-dimensional Young functions $\Phi : \rn \to [0, \infty]$. Hence, the norm of a function in $ V^{1,\Phi}_0(\Omega)$ depends on the full gradient and not only on its length.  In Theorem~\ref{theo-conPhi}, the continuity of the operator
\begin{align}
    \label{jan25}
    T_f : V^{1,\Phi}_0(\Omega) \to V^{1,\Phi}_0(\Omega)
\end{align}
is proved for every $n$-dimensional Young function $\Phi$. Moreover, the operator 
\begin{align}
    \label{jan26}
    T_f : V^{1,\Phi}_0(\Omega) \to V^{1,\Psi}_0(\Omega)
\end{align}
is shown to be continuous under a bound on $\Psi$,  depending on $\Phi$, $n$ and the growth of $|f'|$, which extends the assumption required for \eqref{jan24}.  This is the content of Theorem~\ref{theo:conPhi0}.
\\
Importantly, our conclusions in the anisotropic setting are new, even for   classical orthotropic  functions of the form
$\Phi (\xi) = \sum _{i=1}^n |\xi_i|^{p_i}$,
where $p_i\in [1, \infty)$ for $i=1, \dots , n$ and $\xi= (\xi_1,  \dots , \xi_n)$. This special instance is discussed in Example~\ref{ex-ortho}. The more general class of functions given by 
$\Phi (\xi) = \sum _{i=1}^n A_i(|\xi_i|)$,
where $A_i$ is a Young function,  for $i=1, \dots , n$, is considered in
Corollary~\ref{coro:ortho}.

The lack of homogeneity of Young functions prevents some standard properties of Lebesgue and Sobolev spaces, such as some classical convergence theorems,  from holding in Orlicz and Orlicz-Sobolev spaces, unless additional assumptions are imposed on the defining Young functions. By contrast, the missing properties can typically be restored if the norm topology in the relevant spaces is replaced with the so-called modular topology. The latter is defined through integrals instead of norms and is not equivalent to the norm topology for Young functions that do not satisfy the $\Delta_2$-condition, a property which is not assumed in our main results. This is not a serious issue in the applications we alluded to above, where integrals of functions rather than their norms come into play.
 In the same spirit, we deal with the continuity of the operator $T_f$ in Orlicz-Sobolev spaces in the modular topology. This is not just a technical choice, inasmuch as the conclusions of our results can fail in the norm topology in the absence of the $\Delta_2$-condition, as demonstrated by Example~\ref{ex:counter}.

Let us mention that the continuity of composition of operators in Orlicz-Sobolev spaces was earlier addressed in the paper~\cite{HardyThompson}, whose results are improved and extended by our contribution in several respects. Primarily, the use of sharp Sobolev embedding theorems from \cite{ACPS, IsotropicSobolev, Cianchi_CPDE, AnisoSobolev} enables us to derive a natural optimal balance between the functions $A$ and $B$ in \eqref{jan24}, and $\Phi$ and $\Psi$ in \eqref{jan26}. This is not achieved in \cite{HardyThompson}, whose approach relies upon an older non-optimal (in general) Orlicz-Sobolev embedding of~\cite{DT}. Importantly, the results of \cite{HardyThompson} also require unnecessary extra hypotheses, including the $\Delta_2$-condition, on the Young functions defining the Orlicz-Sobolev spaces. Moreover, domains $\Omega$ with  infinite measure are excluded from the discussion in \cite{HardyThompson}, and anisotropic Orlicz-Sobolev spaces are not considered.

Our approach builds upon methods employed in \cite{BL} and~\cite{MarMiz} in the case of classical Sobolev spaces. However, dealing with  Orlicz-Sobolev spaces entails facing additional difficulties, including the lack of homogeneity and of the $\Delta_2$-property of the Young functions defining the spaces in question, the non-equivalence of the norm and the modular topologies, and the unconventional structure of  optimal Orlicz-Sobolev inequalities in integral form. To overcome these problems, specific arguments tailored for Orlicz spaces are developed in this work.

The paper is organized as follows. Section~\ref{sec:pre} is devoted to the necessary background on Young functions, Orlicz, Orlicz-Sobolev spaces, and their anisotropic versions. Our main results and their implementation in special instances are presented in the subsequent two sections. Specifically, Section~\ref{sec:iso} is devoted to 
 the isotropic regime, whereas the anisotropic realm is the subject of   Section~\ref{sec:aniso}. The proofs of the main results are accomplished in Section \ref{sec:proofs}, which is preceded by Section \ref{sec:tech} where some technical lemmas are collected.
\section{Background}\label{sec:pre}

Although the isotropic Orlicz-Sobolev spaces are a special case of their anisotropic generalizations, we provide a separate treatment for ease of presentation. The pertaining basic notions about Young functions and Orlicz spaces, and of their vectorial counterparts, are also recalled. We refer to the  monographs  \cite{HaHas, Krasn, RaoRen} for an exhaustive treatment of this topic.

\subsection{Young functions, Orlicz and Orlicz-Sobolev spaces}\label{sub-iso}

 A function $$A : [0, \infty) \to [0, \infty]$$ is said to be a \emph{Young function} if it is convex, left-continuous, satisfies $A(0)=0$, and is non-constant in $(0,\infty)$. As a consequence,
 one has that the function
 \begin{align}
     \label{incr}
     \frac{A(t)}t \quad \text{is non-decreasing in} \,\, (0, \infty).
 \end{align}
 Moreover,
 \begin{align}
     \label{Alambda}
     A(\lambda t) \leq \lambda A(t) \quad \text{if $\lambda \geq 1$ and $t\geq 0$.} 
 \end{align}
We shall say that the Young function $A$ is non-degenerate if 
\begin{align}
    \label{nondeg}
    A(t)>0 \,\,\,\text{for}\,\,\,t>0.
\end{align}
 A Young function $A$ satisfies the $\Delta_2$-condition (briefly, $A \in \Delta_2$) globally if there exists a constant $c\geq 1$ such that
 \begin{equation}\label{eq:ADelta-2}
 A(2t) \leq cA(t) \quad \text{for $t\geq 0$.}
  \end{equation}
 The function $A$
satisfies the $\Delta_2$-condition near infinity if it is finite-valued and~\eqref{eq:ADelta-2} is only fulfilled for sufficiently large $t$. Similarly, $A$ is said to satisfy the $\Delta_2$-condition near zero if~\eqref{eq:ADelta-2} is fulfilled only for sufficiently small $t \geq 0$.
{Notice that, if $A\in \Delta_2$ globally or near zero, then $A$ is non-degenerate.}
\\
Given any non-decreasing function $A : [0, \infty) \to [0, \infty]$, the generalized right-continuous inverse $A^{-1} : [0, \infty] \to [0, \infty]$ is defined as

\begin{equation}\label{A-inverse}
    A^{-1}(t) = \inf \{ s \geq 0 : A(s) > t \} \quad \text{for $t \geq 0$,}
\end{equation}
with the convention that $\inf \emptyset = \infty$. Notice that $t \leq A^{-1}(A(t))$ for $t \geq 0.$
\\
Two Young functions $A$ and $B$ are said to be equivalent globally/near infinity/near zero if there exists a positive constant $c$ such that
\begin{align}
    \label{2024-210}
    A(t/c) \leq B(t) \leq A(ct)
\end{align}
for $t\geq 0$/for $t\geq t_0$ and for some $t_0>0$/for $0\leq t\leq t_0$ and for some $t_0>0$, respectively. We shall write
\begin{align}
\label{2024-211} 
A \simeq B
\end{align}
to denote the  equivalence between $A$ and $B$ in the sense of \eqref{2024-210}. 
These notions will also be employed for nonnegative increasing functions, which are not necessarily Young functions.

Let $\Omega$ be a measurable set in $\rn$. We denote by $\mathcal M (\Omega)$ the set of measurable functions $u: \Omega \to \R$. The Orlicz space $L^A(\Omega)$ is defined as
$$ L^{A}(\Omega) = \bigg\{u\in \mathcal M(\Omega): \int_{\Omega} A\Big(\frac{|u(x)|}\lambda\Big)\d x < \infty \,\, \text{for some $\lambda >0$}\bigg\}\,,$$
and is equipped with the Luxemburg norm 
 \begin{equation*}
    \|u\|_{L^{A}(\Omega)} = \inf \bigg\{ \lambda > 0: \int_{\Omega} A\Big(\frac{|u(x)|}{\lambda} \Big)\d x \leq 1 \bigg\}\,.
\end{equation*}
We denote by $E^A(\Omega)$ the subspace of $L^{A}(\Omega)$ defined as
$$ E^{A}(\Omega) = \bigg\{u\in \mathcal M(\Omega): \int_{\Omega} A\Big(\frac{|u(x)|}\lambda\Big)\d x < \infty \,\, \text{for every $\lambda >0$}\bigg\}\,.$$
The  norms $\|\cdot \|_{L^A
(\Omega)}$ and $\|\cdot \|_{L^B (\Omega)}$ are equivalent if and only if either $|\Omega|<\infty$ and
$A$ and $B$ are equivalent near infinity, or $|\Omega|=\infty$ and $A$ and $B$ are equivalent globally.
\\ Assume that $|\Omega|<\infty$ and that either $p\geq 1$ and $\alpha \in \R$, or $p=1$ and $\alpha \geq 0$.  We denote by 
$L^p(\log L)^\alpha (\Omega)$ the Orlicz space, also called Zygmund space, associated with a Young function equivalent to $t^p(\log (1+t))^\alpha$ near infinity. Accordingly, the notation $L^p(\log \log  L)^\alpha (\Omega)$ stands for the Orlicz space associated with a Young function equivalent to $t^p\left(\log \log(1+t)\right)^{\alpha}$ near infinity. Also, given $\alpha >0$, we denote by $\exp L^\alpha (\Omega)$ the Orlicz space built upon a Young function equivalent to $e^{t^\alpha}$ near infinity.
\\
A sequence of functions $\{u_k\}$  in $L^{A}(\Omega)$ is said to converge modularly to a function $u \in L^{A}(\Omega)$ if
\begin{equation}\label{modular}
    \text{there exists $\lambda >0$ such that} \quad \lim_{k\to \infty}\int_{\Omega} A\left(\frac{|u_k(x) - u(x)|}{\lambda}\right)\d x= 0\,.
\end{equation}
We say that $u_k$ converges to $u$ modularly with constant $\lambda$ if the limit in \eqref{modular} holds for such a $\lambda$. 
\\  If $A$ is a non-degenerate Young function, modular convergence in $L^A(\Omega)$ implies convergence in measure. Moreover, if $|\Omega| < \infty$, then modular convergence in $L^A(\Omega)$ implies convergence in $L^1(\Omega)$.
\\
Recall that
\begin{equation}\label{eq:normconv-def}
    \lim_{k \to \infty}\|u_k - u\|_{L^A(\Omega)}=0 \quad \text{if and only if}  \quad \lim_{k\to \infty}\int_{\Omega} A\left(\frac{|u_k(x) - u(x)|}{\lambda}\right)\d x= 0   \quad \text{for every $\lambda >0\,$.}
\end{equation}
Thus, $u_k\to u$ in norm if and only if $u_k\to u$ modularly with every constant $\lambda >0$. In particular, norm and modular convergence in $L^A(\Omega)$ are equivalent if $A\in \Delta_2$.
\\
  Assume that $|\Omega|<\infty$ and $A$ and $B$ are Young functions equivalent near infinity. Then 
\begin{equation}
    \label{feb2}
   \text{ $u_k \to u$ \,\,  modularly in $L^A(\Omega)$ \,\, if and only if  \,\, $u_k \to u$ modularly in $L^B(\Omega)$.}
\end{equation}
This is a consequence of Vitali Convergence Theorem. 

Assume now that $\Omega$ is an open set. The Orlicz-Sobolev space $W^{1, A}(\Omega)$ is defined as
\begin{equation}\label{W1A}
    W^{1, A}(\Omega) = \{u \in L^{A}(\Omega) : \,\text{$u$ is weakly differentiable and $\nabla u \in L^{A}(\Omega)$}\},
\end{equation}
and is endowed with the norm given by
$$\|u\|_{W^{1,A}(\Omega)} = \|u\|_{L^{A}(\Omega)} + \|\nabla u\|_{L^{A}(\Omega)} \quad \text{for } u \in W^{1, A}(\Omega)\,.$$
\\
The alternate notation $W^{1}L^A(\Omega)$ instead of $W^{1,A}(\Omega)$ will also be employed when convenient.
\\ The homogeneous Orlicz-Sobolev space $V^{1, A}_0(\Omega)$ of those functions whose gradient belongs to $L^A(\Omega)$ and vanish on $\partial \Omega$ 
is suitably defined as follows.   Given any function $u: \Omega \to \mathbb R$, let $\widehat u : \rn \to \mathbb R$ be the function defined as 
 \begin{align}
     \label{extu}
     \widehat u(x) = \begin{cases}
         u(x) \quad & \text{if $x\in \Omega$}
        \\ 0 \quad & \text{if $x\in \rn \setminus \Omega$.}
     \end{cases}
 \end{align}
Then we set
\begin{align}\label{V1A0}
    V^{1, A}_0(\Omega) = \{u \in \mathcal M (\Omega):  \text{
    $\widehat u \in W^{1, 1}_{loc}(\Rn)$, $\nabla \widehat u \in L^{A}(\rn)$, $|\{ x\in \rn : |\widehat u(x)| > t\}| < \infty$ \, for $t>0$
\}}\,.
    \end{align}
    Clearly, $\widehat u=u$ in \eqref{V1A0} if $\Omega = \rn$.  The space $V^{1, A}_0(\Omega)$ is equipped with the norm given by 
    \begin{align*}
        \|u\|_{V^{1, A}_0(\Omega)}
= \|\nabla u\|_{L^A(\Omega)} \quad \text{for } u \in V^{1, A}_0(\Omega)\,.\end{align*}
Modular convergence in the spaces $W^{1, A}(\Omega)$ and $V^{1, A}_0(\Omega)$ is defined 
in analogy to \eqref{modular}.
The Sobolev type spaces $W^{1}E^A(\Omega)$ and $V^{1}_0E^A(\Omega)$  are defined as in \eqref{W1A} and \eqref{V1A0}, with the space $L^A(\Omega)$ replaced with $E^A(\Omega)$.
\\ { Let us notice that, if $|\Omega|<\infty$, then  thanks to a Poincar\'e type inequality in Orlicz spaces, $V^{1,A}_0(\Omega)\to W^{1, A}(\Omega)\to W^{1,1}(\Omega)$ for every Young function $A$.}
\\
{ 
Sobolev type inequalities and corresponding embeddings play a critical role in our results. They take a different form according to whether $n=1$ or $n \geq 2$.
\\ Consider first the case when $n=1$. Let $\Omega$ be an open set in $\R$ and let $A$ be a Young function. If $|\Omega|<\infty$, then
\begin{align}
    \label{1demb0}
\|u\|_{L^\infty(\Omega)} \leq |\Omega|A^{-1}\bigg(\frac 1{|\Omega|}\int_\Omega A(|u'|)\, \d x\bigg)
\end{align}
for $u \in V^{1, A}_0(\Omega)$.
\\ If $\Omega$ is any open set and $A(t) \simeq t$ near zero, then there exists a constant $c=c(A)$ such that
\begin{align}
    \label{1demb0inf}
\|u\|_{L^\infty(\Omega)} \leq c\int_\Omega A(|u'|)\, \d x
\end{align}
for $u \in V^{1, A}_0(\Omega)$.
\\ In both cases, 
\begin{align}
    \label{embV0R}
    V^{1, A}_0(\Omega) \to L^\infty (\R).
\end{align}
Assume that $\Omega \subset \R$ is an open interval and $A$ is any Young function. Then,
\begin{align}
    \label{embWR}
    W^{1, A}(\Omega) \to L^\infty (\R),
\end{align}
and 
\begin{align}
    \label{1demb}
    \|u\|_{L^\infty(\Omega)} \leq c \|u\|_{W^{1,A}(\Omega)}
\end{align}
for  $u\in W^{1,A}(\Omega)$, where  $c=c(A, \Omega)$ if $\Omega$ is bounded and $c=c(A)$ otherwise.
\\ The proof of inequalities \eqref{1demb0}, \eqref{1demb0inf}, and \eqref{1demb} is quite elementary. We provide a proof in Lemma \ref{lemma-1d}, Section \ref{sec:tech}, for completeness.}

Assume now that $n\geq 2$. Then optimal embeddings in Orlicz-Sobolev spaces can be described as follows.
\\
Let $A$ be a Young function such that
\begin{equation}\label{A-0}
\int_{0} \left( \frac{t}{A(t)}\right)^{1/(n-1)}\d t < \infty\,.
\end{equation}
The  Sobolev conjugate $A_n$ of $A$ is given by
\begin{align}
    \label{sobconj}
     A_n(t) = A(H_n^{-1}(t)) \quad \text{for $t \geq 0$\,,}
\end{align}
where
\begin{equation}\label{Hn}
    H_n(s) = \left( \int_{0}^{s} \left( \frac{t}{A(t)} \right)^{1/(n-1)} \d t\right)^{1/n'} \quad \text{for $s \geq 0$\,,}
\end{equation}
and $H_n^{-1}$ denotes the generalized left-continuous inverse of $H$.
\\
The Poincar\'e-Sobolev inequality of \cite[Theorem 3]{Cianchi_CPDE} (see also \cite{IsotropicSobolev} for an equivalent version) in Orlicz spaces tells us that 
\begin{align}
    \label{embV1A0}
    V^{1,A}_0(\Omega) \to L^{A_n}(\Omega)\,,
\end{align}
 and
\begin{align}
    \label{sonV1o}
    \int _\Omega A_n\Bigg(\frac {|u(y)|}{c (\int _\Omega
   A(|\nabla u|) \d x)^{1/n}}\Bigg)\d y \leq \int _\Omega A(|\nabla u|)\d x
\end{align}
for some constant $c=c(n)$ and for every  $u \in V^{1,A}_0(\Omega )$.
\\ When $|\Omega|<\infty$, the embedding \eqref{embV1A0}
continues to hold even if $A$ does not satisfy the assumption \eqref{A-0}, provided that $A_n$ is defined with $A$ replaced with a Young function equivalent to $A$ near infinity, which fulfills \eqref{A-0}. {In what follows, when \eqref{A-0} is not explicitly assumed, the function $A_n$ is defined via this replacement whenever needed, without further mentioning.}
\\  Notice that, if 
\begin{equation}\label{A-convinf}
    \int^{\infty} \left( \frac{t}{A(t)}\right)^{1/(n-1)}\d t < \infty\,,
\end{equation}
then 
$\lim_{s\to \infty}H_n(s)<\infty$,
and hence $A_n(t)=\infty$ for large $t$. Therefore, $V^{1, A}_0(\Omega)\to L^\infty(\Omega)$. On the other hand, this embedding fails in the complementary regime when
\begin{equation}\label{A-inf}
    \int^{\infty} \left( \frac{t}{A(t)}\right)^{1/(n-1)}\d t = \infty\,.
\end{equation}
{ Under the assumption \eqref{A-inf}, one also has that
\begin{align}
    \label{embE}
    V^{1,A}_0(\Omega) \to E^{A_n}(\Omega)\,,
\end{align}}
see \cite[Comments after Theorem 3]{Cianchi_CPDE}.

Let $\sigma \geq n$.  An open set $\Omega \subset \rn$  with $|\Omega|<\infty$ is said to satisfy a relative isoperimetric inequality with exponent $1/{\sigma '}$ if there exists a  positive constant $c$   such that 
\begin{equation}\label{isopineq}
c \min\{|G|, |\Omega \setminus G|\}^{\frac 1{\sigma '}} \leq P(G;\Omega)
\end{equation}
for every measurable set $G\subseteq \Omega$. Here, $P(G;\Omega)$ denotes the perimeter of $E$ relative to $\Omega$ in the sense of geometric measure theory. Recall that 
$$P(G;\Omega) = \mathcal H^{n-1} (\partial G \cap \Omega)$$
whenever $\partial G \cap \Omega$ is sufficiently smooth, where $\mathcal H^{n-1}$ stands for the $(n-1)$-dimensional Hausdorff measure. The assumption $\sigma \geq n$ is due to the fact that the inequality \eqref{isopineq} cannot hold if $\sigma < n$, whatever is $\Omega$. This can be shown by testing the inequality when $G$ is a ball and letting its radius tend to $0$.
\\
We denote by $\mathcal G _{1/{\sigma '}}$ the class of domains  in $\rn$ satisfying a relative isoperimetric inequality~\eqref{isopineq} with exponent $1/{\sigma'}$. These classes were introduced in~\cite{Ma0}, where membership of a domain $\Omega$ in  $\mathcal G _{1/{\sigma '}}$ was shown to be equivalent to    the validity of a Sobolev-Poincar\'e inequality between the norm of  functions in $L^{\sigma '}(\Omega)$
and the norm of their gradient in $L^1(\Omega)$. Notice that, in particular, every set $\Omega \in \mathcal G _{1/{\sigma '}}$ is connected. 
\\ Several customary families of domains  are contained in  $\mathcal G _{1/{n'}}$. This is the case of 
bounded Lipschitz domains, domains satisfying the cone condition, or John domains.

Let $\sigma \geq n$ and let $A_\sigma$ be the Young function defined as in~\eqref{sobconj}--\eqref{Hn}, with $n$ replaced with $\sigma$. Namely, 
\begin{align}
    \label{sobconj-sigma}
     A_\sigma(t) = A(H_\sigma^{-1}(t)) \quad \text{for $t \geq 0$\,,}
\end{align}
where
\begin{equation}\label{Hn-sigma}
    H_\sigma(s) = \left( \int_{0}^{s} \left( \frac{t}{A(t)} \right)^{1/(\sigma-1)} \d t\right)^{1/\sigma '} \quad \text{for $s \geq 0$\,.}
\end{equation}
Assume that  $\Omega \in \mathcal G _{1/{\sigma'}}$ for some $\sigma \geq n$. Then, 
\begin{align}
    \label{embW1A}
W^{1,A}(\Omega) \to L^{A_\sigma}(\Omega)\,,
\end{align}
 and,  by~\cite[Remarks 3.11 and 3.12]{SomeRes},  
\begin{equation}\label{eq:sobemb}
    \int_{\Omega} A_\sigma\left( \frac{|u - u_\Omega|}{c\left(\int_{\Omega} A(|\nabla u|)\d y\right)^{1/\sigma}} \right)\d x \leq \int_{\Omega} A(|\nabla u|)\d x 
\end{equation}
for some constant $c=c(\Omega)$ and 
for every $u \in W^{1, A}(\Omega)$. Here, 
$u_\Omega = \avint_{\Omega} u\d y$, the average of $u$ over $\Omega$.
\\ In particular, if
\begin{equation}\label{A-convinf-sigma}
    \int^{\infty} \left( \frac{t}{A(t)}\right)^{1/(\sigma-1)}\d t < \infty\,,
\end{equation}
then 
\begin{align}
    \label{w1Ainfinity}
W^{1, A}(\Omega)\to L^\infty(\Omega)\,.
\end{align}
On the other hand, if~\eqref{A-convinf-sigma} fails, then 
\begin{align}
    \label{embEsigma}
    W^{1,A}(\Omega) \to E^{A_\sigma}(\Omega)\,.
\end{align}
In analogy with~\eqref{embV1A0}, since we are assuming that $|\Omega|<\infty$, the embedding \eqref{embW1A} continues to hold with a suitable replacement of $A$ in the definition of $A_\sigma$, even if the integral in \eqref{Hn-sigma} does not converge. In what follows, such a replacement will implicitly be performed whenever needed in the definition of $A_\sigma$.

Let us now drop the assumption that $|\Omega|<\infty$, and suppose that $\Omega$ is just an extension domain in $\rn$, i.e., an open set admitting a linear operator $\mathcal E$ mapping functions defined in $\Omega$ to functions defined in $\rn$, such that
\begin{align*}
\mathcal E u(x) &= u(x) \quad \text{for $x \in \Omega\,$,}\\ 
\mathcal E : W^{1,1}(\Omega) \to W^{1,1}(\rn) &\quad \text{and} \quad  \mathcal E : W^{1,\infty}(\Omega) \to W^{1,\infty}(\rn)\,.
\end{align*}
Under these assumptions, one has that
\begin{align}\label{feb7}
    \mathcal E : W^{1,A}(\Omega) \to W^{1,A}(\rn)
\end{align}
for every Young function $A$ -- see \cite[Theorem 4.1]{Ci-Ran}. Since the norm of the operator \eqref{feb7} is independent of $A$, one also has that
\begin{align}
    \label{mod-ext}
    \int_{\rn} A(|\mathcal E u|) + A(|\nabla  \mathcal E u|)\d x \leq c \int_{\Omega} A(|u|) + A(|\nabla u|)\d x 
\end{align}
for every $u\in W^{1,A}(\Omega)$ making the right-hand side finite.\\
Such an extension operator is known to exist, for instance, for strongly Lipschitz domains in the sense of \cite[Chapter VI]{Stein-book} and in $(\varepsilon, \delta)$-domains defined in~\cite{Jones}. { On the other hand, a domain from the class $\mathcal G _{1/{n'}}$ need not be an extension domain.}
\\ 
Let 
$\widehat A_n$ be a Young function such that
\begin{align}
    \label{Anhat}
    \widehat A_n(t) \simeq \begin{cases}
        A_n(t) & \quad \text{near infinity}
        \\ A(t)  & \quad \text{near zero,}
    \end{cases}
\end{align}
where $A_n$ is defined with $A$ replaced, if necessary, with a Young function $\widehat A$ equivalent to $A$ near infinity and satisfying the condition \eqref{A-0}. 
Then, there exists a constant $c$
such that
\begin{align}
    \label{feb4}
    \|u\|_{L^{\widehat A_n}(\Omega)} \leq c \|u\|_{W^{1,A}(\Omega)} \quad \text{for } u \in W^{1,A}(\Omega)\,.
\end{align}
An equivalent form of this result is established in \cite[Theorem 2]{IsotropicSobolev}; the present version is contained in \cite[Theorem 3.5]{ACPS}. Notice that different replacements of $A$ near zero result in equivalent Young functions $A_n$ near infinity, and hence in Young functions $ \widehat A_n$, which are globally equivalent.
\\ In particular, under the assumption \eqref{A-convinf}, we have that
\begin{align}
    \label{feb4inf}
    \|u\|_{L^{\infty}(\Omega)} \leq c \|u\|_{W^{1,A}(\Omega)} \quad \text{for } u \in W^{1,A}(\Omega)\,.
\end{align}

\subsection{\texorpdfstring{$n$}{n}-dimensional Young functions, vector-valued Orlicz spaces, and anisotropic Orlicz-Sobolev spaces} \label{sub-aniso}

An \emph{$n$-dimensional Young function} is a function
 $$\Phi:\R^n\to [0, \infty]$$
 which is
 convex, even, lower semicontinuous,  finite in a neighborhood of $0$, and such that
\begin{align}\label{hpPhi}
   \text{ $\Phi(0)=0$ \,\, and \,\, $\lim_{|\xi|\to\infty}\Phi(\xi)=\infty\,$.}
\end{align}
The $n$-dimensional Young function $\Phi$ will be called non-degenerate if
\begin{align}
    \label{phinondeg}
    \Phi(\xi) > 0 \,\,\, \text{for} \,\,\,  \xi \neq 0.
\end{align}
Equivalence between $n$-dimensional Young functions is defined analogously to \eqref{2024-210}.
\\
Given a measurable set $\Omega \subset \rn$, we denote by 
 $L^\Phi (\Omega , \rn)$ the Orlicz space of measurable vector-valued functions $ \UUU : \Omega \to
\rn$ equipped with the norm
$$ \| \UUU \|_{L^\Phi (\Omega , \rn)}
 = \inf \left\{\lambda >0 :
 \int _\Omega \Phi \Big(\frac {1}{\lambda} \UUU (x)\Big) \d x \leq
 1\right\}.$$
The anisotropic homogeneous Orlicz-Sobolev space associated with $\Phi$ is defined as 
\begin{align}\label{V1Phi0}
    V^{1, \Phi}_0(\Omega) = \{u \in \mathcal M (\Omega):  \text{
    $\widehat u \in W^{1, 1}_{loc}(\Rn, \rn)$, $\nabla \widehat u \in L^{\Phi}(\rn)$, $|\{ x\in \rn : |\widehat u(x)| > t\}| < \infty$ \, for $t>0$
\}}\,,
    \end{align}
    where $\widehat u$ is given by \eqref{extu}.
    The space $V_0^{1, \Phi}(\Omega)$ is equipped with the norm
    \begin{equation*}
        \|u\|_{V_0^{1, \Phi}(\Omega)} = \|\nabla u\|_{L^{\Phi}(\Omega)}\,.
    \end{equation*}
Modular convergence in $L^\Phi (\Omega , \rn)$ and in   $ V^{1,\Phi}_0(\Omega)$ is defined analogously to \eqref{modular}. 
\\{Observe that, if $|\Omega|<\infty$, then $V^{1,\Phi}_0(\Omega)\to V^{1,1}_0(\Omega)\to W^{1,1}(\Omega)$ for every $n$-dimensional Young function $\Phi$.}
\\
Denote by
 $\Phi_\circ : [0, \infty ) \to [0, \infty)$  the Young function obeying
\begin{equation}\label{phistar}
|\{\xi \in \rn: \Phi_\circ (|\xi|) \leq t\}|  =|\{\xi \in \rn: \Phi
(\xi)\leq t\}|  \quad \hbox{for $t\geq 0\,$.}
\end{equation}
The function $\rn \ni \xi \mapsto \Phi _\circ(|\xi|)$   is a kind of    \lq\lq average in measure\rq\rq  \, of $\Phi$. 
\\
 Under the assumption   
\begin{equation}\label{A-0-aniso}
\int_{0} \left( \frac{t}{\Phi_\circ(t)}\right)^{1/(n-1)}\d t < \infty\,,
\end{equation}
the Sobolev conjugate $\Phi_n$ of $\Phi$ is defined as in~\eqref{sobconj}--\eqref{Hn}, with $A$ replaced with $\Phi_\circ$. Namely,
\begin{align}
    \label{sobconj-aniso}
     \Phi_n(t) = \Phi(H_\Phi^{-1}(t)) \quad \text{for $t \geq 0\,$,}
\end{align}
where
\begin{equation}
    H_\Phi(s) = \left( \int_{0}^{s} \left( \frac{t}{\Phi_\circ(t)} \right)^{1/(n-1)} \d t\right)^{1/n'} \quad \text{for $s \geq 0\,$,}
\end{equation}
  The anisotropic Sobolev embedding reads
\begin{align}
    \label{emb-aniso}
    V^{1, \Phi}_0(\Omega) \to L^{\Phi_n}(\Omega)\,,
\end{align}
and the corresponding
Poincar\'e-Sobolev inequality takes the form
\begin{equation}\label{eq:sobemb-2}
    \int_{\Omega} \Phi_n\left( \frac{|u|}{c\left(\int_{\Omega} \Phi(\nabla u)\d y\right)^{1/n}} \right)\d x \leq \int_{\Omega} \Phi(\nabla u)\d x 
\end{equation}
for some constant $c=c(n)$ and for every $u \in V^{1, \Phi}_0(\Omega)$, see~\cite{AnisoSobolev}.
\\  As in the isotropic framework, the embedding \eqref{emb-aniso} continues to hold, when $|\Omega|< \infty$, even if condition \eqref{A-0-aniso} fails, provided that $\Phi_n$ is defined with $\Phi$ suitably modified near zero. This modification will implicitly be adopted, whenever needed, without explicit mention throughout.
 In particular, the embedding 
\begin{align}
    \label{apr60}
    V^{1, \Phi}_0(\Omega) \to L^\infty(\Omega)
\end{align}
holds under the assumption
\begin{equation}\label{Phi-convinf}
    \int^{\infty} \left( \frac{t}{\Phi_\circ(t)}\right)^{1/(n-1)}\d t < \infty\,,
\end{equation}
whereas it fails if
\begin{equation}\label{Phi-inf}
 \int^{\infty} \left( \frac{t}{\Phi_\circ(t)}\right)^{1/(n-1)}\d t = \infty\,.
\end{equation}
Isotropic Orlicz and Orlicz-Sobolev spaces are recovered as special instances of their anisotropic counterparts with the choice 
$$\Phi (\xi) = A(|\xi|) \qquad  \text{for $\xi \in \R^n$}.$$
  Genuinely anisotropic instances of $n$-dimensional Young functions, sometimes called orthotropic in the literature, have the form
\begin{equation}\label{Phi=Ai}
\Phi (\xi) = \sum _{i=1}^n A_i(|\xi_i|) \qquad \hbox{for $\xi \in \R^n$,}
\end{equation}
where $A_i$ are Young functions and $\xi = (\xi_1, \dots \xi_n)$. 
A customary example of functions of this kind is
\begin{equation}\label{Phi=pi}
\Phi (\xi) = \sum _{i=1}^n |\xi_i|^{p_i} \qquad \hbox{for $\xi \in \R^n$,}
\end{equation}
where  $1\leq p_i<\infty$, for  $i=1,\dots,n$.
\\
The example \eqref{Phi=Ai} can be further generalized as
{\begin{equation}\label{Phi=full}
\Phi (\xi) =  \sum _{i=1}^n A_i\left(\sqrt{\sum_{j=1}^{n}\Big(\sum_{k=1}^n \alpha_{j, k}(i)\xi_k\Big)^2}\right)\qquad \hbox{for $\xi \in \rn$,}
\end{equation}}
where $A_i$ are Young functions, $n \in \mathbb N$, and, for each $i=1, \dots, n$, the matrix $(\alpha _{j, k}(i)) \in \mathbb R^{n\times n}$ is such that ${\rm det} (\alpha _{jk}(i))\neq 0$. A possible instance, for $n=2$, is
\begin{equation}
\label{trud}
\Phi (\xi) = |\xi_1 -\xi_2|^p + |\xi_1|^q\log (c+ |\xi _1|)^\alpha \quad \hbox{for $\xi \in \mathbb R^2$,}
\end{equation}
where either  $q>1$ and $\alpha \in \mathbb R$ , or $q=1$ and $\alpha \geq 0$, the exponent  $p\geq 1$, and $c$ is a sufficiently large constant for $\Phi$ to be convex. Another example   amounts to the function
\begin{equation}
\label{trud1}
\Phi (\xi) = |\xi_1 + 3 \xi_2|^p + e^{|2\xi_1-\xi_2|^\beta} -1  \quad \hbox{for $\xi \in \mathbb R^2$,  }
\end{equation}
with $p\geq 1$ and $\beta >1$.
\\ 
  Functions as in \eqref{Phi=full},
composed with another Young function,  result in $n$-dimensional Young functions as well.
One can also consider $n$-dimensional Young functions of the form
\begin{equation}\label{Phi=fuller}
\Phi (\xi) =  \sum _{i=1}^h B_i\Big( {\sum_{j=1}^{n}}A_{{i, j}}\Big(\Big|\sum_{k=1}^n \alpha_{{j, k}}(i)\xi_k\Big|\Big)\Big) \qquad \hbox{for $\xi \in \rn$,}
\end{equation}
where $B_i$ and $A_{ij}$ are Young functions, {$h \in \N$} and ${\rm det} ( \alpha_{{j, k}}(i) ) \neq 0$ {for $i=1,2,\dots, h$}, $j=1, \dots n$. An example is given by the function
$$\Phi (\xi)= (|\xi_1+\xi_2| + |\xi_2|^2)^3 + (|2\xi_3-\xi_4|^3 + |\xi_3+3\xi_4|^2)^2 \quad \hbox{for $\xi \in \mathbb R^4$.  }$$
Let us stress that there exist $n$-dimensional Young functions that do not split as in \eqref{Phi=fuller} -- see, e.g., \cite{ChNa}.

The Sobolev conjugate of orthotropic functions as in \eqref{Phi=Ai} takes, up to equivalence, a quite explicit form. Indeed, one has that
\begin{align}
    \label{ortho2}
    \Phi_\circ (t) \simeq     \overline A(t) \quad \text{for $t \geq 0$\,,}
\end{align}
where $\overline A$ is the Young function obeying
\begin{align}
    \label{ortho1}
    \overline A ^{\,-1}(t) = \Big(\prod _{i=1}^nA_i^{-1}(t)\Big)^{\frac 1n} \quad \text{for $t \geq 0\,$.}
\end{align}
Hence, if we denote by $\overline A_n$ the function defined as 
\begin{align}
    \label{Ahatn}
  \overline A_n (t) = \overline A (\overline H_n^{-1} (t)) \quad \text{for $t \geq 0\,$,}
\end{align}
where 
\begin{align}
    \label{Hhat}
    \overline H_n(s) = \left( \int_{0}^{s} \left( \frac{t}{\overline A(t)} \right)^{1/(n-1)} \d t\right)^{1/n'} \quad \text{for $s \geq 0\,$,}
\end{align}
 then
\begin{align}
    \label{ortho3}
    \Phi_n (t) \simeq \overline A_n(t) \quad \text{for $t \geq 0$.}
\end{align}

\section{Composition operators in isotropic Orlicz-Sobolev spaces}\label{sec:iso}
Our first result deals with the continuity of the composition operator $T_f$ from an Orlicz-Sobolev space into itself on open sets $\Omega \subset \rn$, with $n \geq 2$. It provides us with minimal assumptions on $f$ 
for the continuity of the map
\begin{align}
    \label{jan23bis}
    T_f : W^{1,A}(\Omega) \to W^{1,A}(\Omega)
\end{align}
in the modular topology.
{We point out that if  $\Omega$ is not assumed to have a finite measure, the results presented below hold, in particular, for $\Omega = \rn$.}
\begin{theo}\label{theo-conA}
Let $\Omega$ be an open set in $\rn$, with $n \geq 2$, and let $A$ be a non-degenerate Young function. Assume that $f : \R \to \R$ is a Lipschitz continuous function.  If $|\Omega| = \infty$, assume, in addition, that $f(0) = 0$. Then \eqref{jan23bis}  holds.
\end{theo}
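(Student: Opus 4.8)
The plan is to reduce the statement to two facts: (i) $T_f$ is well defined and maps $W^{1,A}(\Omega)$ into itself, and (ii) it is continuous in the modular topology. For (i), if $f$ is Lipschitz with constant $L$, then $|f(u(x))| \le |f(0)| + L|u(x)|$, so when $|\Omega|<\infty$ one has $f(u) \in L^A(\Omega)$ because constants lie in $L^A(\Omega)$ and $u \in L^A(\Omega)$; when $|\Omega|=\infty$ the hypothesis $f(0)=0$ gives $|f(u(x))| \le L|u(x)|$, so again $f(u)\in L^A(\Omega)$, using \eqref{Alambda} to absorb the constant $L$. For the gradient, the classical chain rule for Lipschitz compositions with Sobolev functions (valid since $u \in W^{1,1}_{loc}$, which follows from the Poincar\'e-type embedding $W^{1,A}(\Omega)\to W^{1,1}(\Omega)$ recalled in the preliminaries) yields that $f(u)$ is weakly differentiable with $\nabla(f(u)) = f'(u)\nabla u$ a.e., where $f'(u)$ is interpreted appropriately on the (null) set where $u$ takes values outside the differentiability set of $f$; in any case $|\nabla(f(u))| \le L|\nabla u|$ a.e., so $\nabla(f(u)) \in L^A(\Omega)$. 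Hence $T_f(u) \in W^{1,A}(\Omega)$ and the pointwise modular bound
\begin{equation}\label{aux-mod-bound}
\int_\Omega A\Big(\tfrac{|f(u)|}{\lambda}\Big) + A\Big(\tfrac{|\nabla(f(u))|}{\lambda}\Big)\d x \le \int_\Omega A\Big(\tfrac{|f(0)|+L|u|}{\lambda}\Big) + A\Big(\tfrac{L|\nabla u|}{\lambda}\Big)\d x
\end{equation}
holds for every $\lambda>0$.

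For the continuity (ii), suppose $u_k \to u$ modularly in $W^{1,A}(\Omega)$, i.e. there is $\lambda>0$ with $\int_\Omega A(|u_k-u|/\lambda) + A(|\nabla u_k - \nabla u|/\lambda)\d x \to 0$. I want to produce a (possibly larger) constant $\mu$ for which $\int_\Omega A(|f(u_k)-f(u)|/\mu) + A(|\nabla(f(u_k))-\nabla(f(u))|/\mu)\d x \to 0$. The zeroth-order term is easy: $|f(u_k)-f(u)| \le L|u_k - u|$, so by \eqref{Alambda} (choosing $\mu \ge L\lambda$) the corresponding integral is dominated by $\int_\Omega A(|u_k-u|/\lambda)\d x \to 0$. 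The first-order term is the crux. Writing $\nabla(f(u_k)) - \nabla(f(u)) = f'(u_k)\nabla u_k - f'(u)\nabla u = f'(u_k)(\nabla u_k - \nabla u) + (f'(u_k) - f'(u))\nabla u$, one controls the first summand by $L|\nabla u_k - \nabla u|$, which is handled as before. The remaining summand $(f'(u_k)-f'(u))\nabla u$ is the genuine difficulty.

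To handle $(f'(u_k) - f'(u))\nabla u$ I would argue along subsequences, exploiting that modular convergence of a non-degenerate Young function implies convergence in measure (stated in the preliminaries); along a further subsequence $u_k \to u$ a.e. Since $f'$ is bounded ($|f'|\le L$) but need not be continuous, one cannot conclude $f'(u_k)\to f'(u)$ a.e. directly — this is exactly the obstacle I expect to be hardest. The standard device (as in \cite{MarMiz, BL}) is to first prove the result when $f \in C^1$ with bounded derivative: then $f'(u_k)\to f'(u)$ a.e. along the subsequence, $|(f'(u_k)-f'(u))\nabla u| \le 2L|\nabla u|$ with $A(2L|\nabla u|/\mu)$ integrable for suitable $\mu$ (here \eqref{Alambda} and $A(|\nabla u|/\lambda_0)$ integrable for some $\lambda_0$ are used), and dominated convergence gives $\int_\Omega A(|(f'(u_k)-f'(u))\nabla u|/\mu)\d x \to 0$; combined with the convexity inequality $A(\tfrac{a+b}{\mu}) \le \tfrac12 A(\tfrac{2a}{\mu}) + \tfrac12 A(\tfrac{2b}{\mu})$ this settles the $C^1$ case. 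For general Lipschitz $f$ one approximates $f$ uniformly on $\R$ by $C^1$ functions $f_j$ with $\|f_j'\|_{L^\infty}\le L$ (e.g. by mollification, which preserves the Lipschitz constant and, if $f(0)=0$, can be arranged so that $f_j(0)\to 0$), and estimates
\[
|f(u_k)-f(u)| \le |f(u_k)-f_j(u_k)| + |f_j(u_k)-f_j(u)| + |f_j(u)-f(u)| \le 2\|f-f_j\|_{L^\infty} + |f_j(u_k)-f_j(u)|,
\]
with an analogous splitting for the gradients using $\nabla(f(u_k)) - \nabla(f_j(u_k)) = (f'(u_k)-f_j'(u_k))\nabla u_k$; the terms involving $f-f_j$ are made small in the modular sense by first taking $j$ large (a uniform-in-$k$ estimate via the modular bound \eqref{aux-mod-bound} applied to $f-f_j$, whose Lipschitz constant is at most $2L$ and which vanishes at $0$), after which the $f_j$ terms go to zero as $k\to\infty$ by the already-established $C^1$ case. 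A standard subsequence argument then upgrades convergence along subsequences to convergence of the full sequence, completing the proof.
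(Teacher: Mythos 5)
Your outline agrees with the paper up to the identification of the crux: both treat well-definedness and the zeroth-order modular convergence in the same way (Lipschitz bound, convexity of $A$, the property \eqref{Alambda}), and both isolate the term involving the difference of $f'$ evaluated along the sequence as the genuine difficulty. Where you diverge from the paper is in the strategy for attacking that term, and this is where there is a gap.

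The problematic step is the passage from the $C^1$ case to general Lipschitz $f$. You mollify $f$ to obtain $f_j$ with $\|f_j'\|_\infty\le L$ and $\|f-f_j\|_\infty\to 0$, and you claim that the gradient error term $\nabla\big((f-f_j)(u_k)\big)=(f'-f_j')(u_k)\nabla u_k$ can be ``made small in the modular sense by first taking $j$ large (a uniform-in-$k$ estimate via the modular bound \eqref{aux-mod-bound} applied to $f-f_j$).'' But \eqref{aux-mod-bound} applied to $f-f_j$ only gives
\[
\int_\Omega A\Big(\tfrac{|(f'-f_j')(u_k)\nabla u_k|}{\mu}\Big)\d x \le \int_\Omega A\Big(\tfrac{2L|\nabla u_k|}{\mu}\Big)\d x,
\]
and the right-hand side does \emph{not} shrink as $j\to\infty$: mollification makes $\|f-f_j\|_\infty$ small, but not $\|f'-f_j'\|_\infty$, which for a genuinely nonsmooth Lipschitz $f$ stays of order $L$ however large $j$ is. So the uniform-in-$k$ smallness you need is not delivered by this bound. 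Nor can you get it from a pointwise a.e.\ argument along the sequence: $f_j'\to f'$ a.e.\ on $\R$, but the set $u_k^{-1}(Z)$ where $Z$ is the exceptional Lebesgue-null set can have positive measure in $\Omega$, and it moves with $k$. This is precisely the subtlety that forces the paper into a different mechanism.

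The paper's proof does not reduce to the $C^1$ case at all. Instead it applies Lusin's theorem to $f'$, producing a \emph{continuous} $g_\delta$ with $|g_\delta|\le L$ that agrees with $f'$ outside an open set $S_\delta$ of measure $<\delta$, and splits $f'(u_k)\nabla u_k-f'(u)\nabla u$ into a middle continuous piece $g_\delta(u_k)\nabla u_k - g_\delta(u)\nabla u$ (handled by convergence in measure plus Vitali, using the equiintegrability furnished by Lemma~\ref{int-conv}) and two error pieces supported on $u_k^{-1}(S_\delta)$, $u^{-1}(S_\delta)$. The error pieces are controlled by the coarea formula,
\[
\int_\Omega \chi_{S_\delta}(u)|\nabla u|\d x=\int_{S_\delta}\mathcal H^{n-1}(\{u=t\})\d t\xrightarrow{\delta\to0}0,
\]
together with the (nontrivial) continuity of the functional $v\mapsto\int_\Omega\chi_Z(v)|\nabla v|\d x$ on $W^{1,1}(\Omega)$ from~\cite{GBM, BL}, which transfers the smallness from $u$ to $u_k$ uniformly in $k$. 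It also first splits $\Omega$ by the size of $|\nabla u|$ (the sets $G_0$, $G_\varepsilon$, $\Omega_\varepsilon$) so that the measures $|u_k^{-1}(S_\delta)\cap\{|\nabla u_k|\ge\varepsilon/2\}|$ can be estimated by $\frac{2}{\varepsilon}\int_\Omega\chi_{S_\delta}(u_k)|\nabla u_k|\d x$. None of these ingredients appears in your proposal, and the approximation-by-mollification route as written does not supply a substitute for them.

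In short: your $C^1$ reduction moves the difficulty into the error term $(f'-f_j')(u_k)\nabla u_k$, whose smallness uniform in $k$ is exactly as hard as the original problem. To close the gap you would need, in one form or another, the coarea-plus-$W^{1,1}$-continuity argument that the paper carries out directly on $f'$ itself via Lusin's theorem.
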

 A counterpart of Theorem~\ref{theo-conA} for 
\begin{align}
    \label{jan23_0}
    T_f : V^{1,A}_0(\Omega) \to V^{1,A}_0(\Omega)
\end{align}
in the modular topology reads as follows.
\begin{theo}\label{coro-conA}
Let $\Omega$ be an open set in $\rn$, with $n\geq 2$, and let  $A$ be a  non-degenerate Young function. Assume that  $f : \R \to \R$ is such that $f(0) = 0$.{
\begin{enumerate}[(i)]
    \item   If $f$ is Lipschitz continuous, then \eqref{jan23_0} holds.
    \item 
    Assume, in addition, that the condition~\eqref{A-convinf} is in force,  and either $|\Omega|<\infty$, or $|\Omega| = \infty$ and the condition~\eqref{A-0} is also in force. Then \eqref{jan23_0} holds if $f$ is locally Lipschitz continuous.
\end{enumerate}}
\end{theo}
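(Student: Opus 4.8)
The plan is to treat the two parts separately, reducing both to the already-established continuity of $T_f$ on the model space and then transferring it via the Sobolev embeddings recalled in Section~\ref{sec:pre}. Throughout, note that on $V^{1,A}_0(\Omega)$ the relevant topology is purely the modular topology induced by $\int_\Omega A(|\nabla u|)\,\d x$, so one works directly with gradients rather than with the full Sobolev norm.

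For part (i): Assume $f$ is Lipschitz with constant $L$ and $f(0)=0$. First I would check that $T_f$ maps $V^{1,A}_0(\Omega)$ into itself. If $u\in V^{1,A}_0(\Omega)$, then $\widehat{u}\in W^{1,1}_{loc}(\rn)$, and since $f(0)=0$ the extension of $f\circ u$ by zero agrees with $f\circ\widehat u$; by the chain rule for Lipschitz functions composed with Sobolev functions (the Stampacchia/Marcus–Mizel result quoted in the introduction), $f\circ\widehat u\in W^{1,1}_{loc}(\rn)$ with $\nabla(f\circ\widehat u)=f'(\widehat u)\nabla\widehat u$ a.e., whence $|\nabla(f\circ u)|\le L|\nabla u|$ pointwise. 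By \eqref{Alambda} this gives $\int_\Omega A(|\nabla(f\circ u)|)\,\d x\le \max\{1,L\}\int_\Omega A(|\nabla u|)\,\d x<\infty$, and the superlevel sets of $|f\circ u|$ are contained in those of $L|u|$, hence have finite measure; so $T_f(u)\in V^{1,A}_0(\Omega)$. For continuity, let $u_k\to u$ modularly in $V^{1,A}_0(\Omega)$, say with constant $\lambda$. I would follow the Marcus–Mizel / Boccardo–Murat scheme: write $\nabla(f\circ u_k)-\nabla(f\circ u)=f'(u_k)(\nabla u_k-\nabla u)+(f'(u_k)-f'(u))\nabla u$. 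The first term is controlled modularly by $L|\nabla u_k-\nabla u|$, which tends to zero modularly with constant $\lambda$ by hypothesis (using $A(L\,t/\lambda)\le \max\{1,L\}A(t/\lambda)$ and dominated convergence after passing to a subsequence converging a.e.). The delicate term is the second: here $f'$ is merely bounded and measurable, so one cannot expect pointwise convergence of $f'(u_k)$, but one invokes that modular convergence of $\nabla u_k$ forces (a subsequence of) $u_k$ to converge a.e.\ — this uses the Poincaré-type embedding $V^{1,A}_0(\Omega)\to W^{1,1}(\Omega)$ quoted at the end of Section~\ref{sub-iso} when $|\Omega|<\infty$, and in general convergence in measure on bounded subsets — at points of approximate continuity of $f'$ one gets $f'(u_k)\to f'(u)$, and where $\nabla u=0$ the term vanishes; since $\{\nabla u\ne 0\}$ meets the bad set of $f'\circ u$ in a null set (this is exactly the Marcus–Mizel lemma asserting $\nabla u=0$ a.e.\ on $u^{-1}(N)$ for null $N$), the integrand $A\big(|(f'(u_k)-f'(u))\nabla u|/\lambda\big)\le A(2L|\nabla u|/\lambda)$ converges to zero a.e.\ and is dominated, so by dominated convergence the integral vanishes. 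A subsequence argument then upgrades this to convergence of the full sequence. This establishes \eqref{jan23_0}.

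For part (ii): now $f$ is only locally Lipschitz with $f(0)=0$, but \eqref{A-convinf} holds (so $V^{1,A}_0(\Omega)\to L^\infty(\Omega)$ by the discussion around \eqref{A-inf}/\eqref{A-convinf}, using that, when $|\Omega|<\infty$ or \eqref{A-0} holds so that $A_n$ is well defined and $A_n\equiv\infty$ near infinity). The key observation is that if $u\in V^{1,A}_0(\Omega)$ then $\|u\|_{L^\infty(\Omega)}=:M<\infty$, so only the values of $f$ on the compact interval $[-M,M]$ matter. I would replace $f$ by a globally Lipschitz truncation $\widetilde f$ that agrees with $f$ on $[-M,M]$, with Lipschitz constant $L_M$ equal to the local Lipschitz constant of $f$ there; then $T_f=T_{\widetilde f}$ on the ball $\{\|u\|_{L^\infty}\le M\}$, and part (i) applies to $\widetilde f$ on that ball. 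To make this rigorous at the level of continuity I would argue sequentially: given $u_k\to u$ modularly with constant $\lambda$, the sequence $\{u_k\}$ is eventually bounded in $L^\infty$ uniformly — this needs the \emph{quantitative} embedding, namely the estimate \eqref{sonV1o} together with $A_n\equiv\infty$ near infinity, which bounds $\|u_k\|_{L^\infty}$ in terms of $\int_\Omega A(\lambda^{-1}|\nabla u_k|)\,\d x$, a quantity that is bounded along the sequence; say $\|u_k\|_{L^\infty},\|u\|_{L^\infty}\le M_0$ for all $k$. Then choose the Lipschitz truncation $\widetilde f$ matching $f$ on $[-M_0,M_0]$ and apply part (i) to $\widetilde f$: $T_f(u_k)=T_{\widetilde f}(u_k)\to T_{\widetilde f}(u)=T_f(u)$ modularly. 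The same truncation device shows $T_f$ maps the space into itself. This completes (ii).

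The main obstacle is the second term $(f'(u_k)-f'(u))\nabla u$ in part (i): controlling it requires the Marcus–Mizel structural lemma (gradients vanish a.e.\ on preimages of null sets, so that the discontinuity set of $f'$ does no harm) combined with the fact that modular convergence of gradients yields a.e.\ convergence of a subsequence of the functions themselves — here one must be careful that $V^{1,A}_0(\Omega)$ embeds into $W^{1,1}$ only when $|\Omega|<\infty$, and for general $\Omega$ one argues locally via convergence in measure on balls, exploiting that $\widehat u\in W^{1,1}_{loc}(\rn)$ by definition; the dominating function $A(2L|\nabla u|/\lambda)$ is integrable precisely because $u$ lies in the space. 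A secondary technical point is that, since $A$ need not satisfy $\Delta_2$, one must track the modular constant $\lambda$ carefully and use \eqref{Alambda} rather than any norm equivalence, and conclude via the standard ``every subsequence has a further subsequence converging to the limit'' argument to recover convergence of the whole sequence.
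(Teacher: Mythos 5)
Your part (ii) is sound and matches the paper's device: truncate $f$ to a globally Lipschitz $f_M$ using the $L^\infty$ bound supplied by the embedding $V^{1,A}_0(\Omega)\to L^\infty(\Omega)$ that holds under \eqref{A-convinf}, and reduce to part (i).

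Part (i), however, contains a genuine gap in the treatment of the second term $(f'(u_k)-f'(u))\nabla u$. Your argument hinges on the claim that ``at points of approximate continuity of $f'$ one gets $f'(u_k)\to f'(u)$''. This is false: if $g$ is merely approximately continuous at $t_0$ and $t_k\to t_0$, nothing forces $g(t_k)\to g(t_0)$, because the exceptional set of low density at $t_0$ can still contain a sequence converging to $t_0$. Worse, when $f$ is Lipschitz, $f'$ is only a bounded Borel function, and its set of \emph{ordinary} discontinuity points can have \emph{positive} Lebesgue measure (e.g.\ $f'=\chi_{[0,1]\setminus C}$ for a fat Cantor set $C$). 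The Marcus--Mizel lemma you invoke ($\nabla u=0$ a.e.\ on $u^{-1}(N)$) only applies to \emph{null} sets $N\subset\R$, so it cannot neutralize the preimage of a positive-measure discontinuity set. Hence dominated convergence does not apply to $A\bigl(|(f'(u_k)-f'(u))\nabla u|/\lambda\bigr)$ on any a.e.-basis, and the step collapses.

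The paper circumvents exactly this obstruction in the proof of Theorem~\ref{theo-conA} (of which the present statement is a special case, via Theorem~\ref{theo-conPhi}) by a three-step device you do not reproduce. First, one applies Lusin's theorem to produce a continuous $g_\delta$ with $|g_\delta|\le C_f$, $g_\delta=f'$ off an open set $S_\delta$ with $|S_\delta|<\delta$, and then inserts $g_\delta$ so that the troublesome differences $f'(u_k)-g_\delta(u_k)$ and $g_\delta(u)-f'(u)$ are supported in $u_k^{-1}(S_\delta)$ and $u^{-1}(S_\delta)$ respectively. Second, the measure of these preimages intersected with $\{|\nabla u|\geq\varepsilon\}$ (resp.\ $\{|\nabla u_k|\geq\varepsilon/2\}$) is controlled \emph{quantitatively} by $\tfrac{2}{\varepsilon}\int_\Omega\chi_{S_\delta}(u_k)|\nabla u_k|\,\d x$, and this quantity is driven to zero as $\delta\to0$ by two further ingredients you do not use: (a) the continuity of the functional $v\mapsto\int_\Omega\chi_Z(u)|\nabla v|\,\d x$ on $W^{1,1}(\Omega)$ (the result from~\cite{GBM} and~\cite{BL} cited as \eqref{continuity}), which transfers the estimate from $u$ to $u_k$; and (b) the coarea formula $\int_\Omega\chi_{S_\delta}(u)|\nabla u|\,\d x=\int_{S_\delta}\mathcal H^{n-1}(\{u=t\})\,\d t$, which shows the limit in $\delta$ is zero. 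Third, the region $\{\nabla u=0\}$ and the small-gradient shell $\{0<|\nabla u|<\varepsilon\}$ are handled separately by uniform integrability (via Lemma~\ref{int-conv}), and one first assumes $\Omega$ bounded (so $V^{1,A}_0(\Omega)\to W^{1,1}(\Omega)$), then removes this by a tail estimate. Without the Lusin--coarea machinery there is no a.e.\ convergence to feed into Vitali or dominated convergence, so the argument as written does not close.
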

Note that Theorem~\ref{coro-conA} requires that $f(0) = 0$ even if $\Omega$ is of finite measure,  to guarantee that the function $f(u)$ {vanishes, in the suitable sense, on $\partial \Omega$, if $u$ does.}
The proof of Theorem \ref{coro-conA} will be skipped, inasmuch as it is a special case of Theorem \ref{theo-conPhi}, which deals with possibly anisotropic Orlicz-Sobolev spaces.

\medskip

Although Theorem  is still true, with the same proof,  when $n=1$ and $\Omega$ is an interval -- the case of interest in most applications -- its conclusion holds under the sole assumption that $f$ is locally Lipschitz continuous. A version of Part (i) of Theorem 
\ref{coro-conA} also holds under 
this weaker condition on $f$. As for Part (ii), a different condition is needed if $|\Omega|=\infty$.
These variants are due to the different form of the Poincar\'e-Sobolev inequalities  \eqref{1demb0} and \eqref{1demb} for $n=1$. 
\begin{theo}\label{1d}
Let $\Omega$ be an open interval in $\mathbb R$ and let $A$ be a  non-degenerate Young function. Assume that $f : \R \to \R$ is a locally Lipschitz continuous function.
\begin{enumerate}[(i)]
    \item If either $\Omega$ is bounded, or $\Omega$ is unbounded and $f(0)=0$, then \eqref{jan23bis} holds.
    \item Assume that 
$f(0) = 0$. If  either $\Omega$ is bounded, or $\Omega$ is unbounded and $A(t) \simeq t$ near zero, then ~\eqref{jan23_0} holds.
\end{enumerate}
\end{theo}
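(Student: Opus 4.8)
The plan is to follow the scheme used for Theorems~\ref{theo-conA} and~\ref{coro-conA}, replacing the higher-dimensional Orlicz--Sobolev embeddings by their one-dimensional counterparts \eqref{1demb0}, \eqref{1demb0inf}, and \eqref{1demb}. The crucial feature of the case $n=1$ is that, $\Omega$ being an interval, one has $W^{1,A}(\Omega)\to L^\infty(\R)$ by \eqref{embWR}, and, under the stated assumptions on $\Omega$ and $A$, also $V^{1,A}_0(\Omega)\to L^\infty(\R)$ by \eqref{embV0R}; hence every function in the relevant space is essentially bounded. This is exactly what makes local Lipschitz continuity of $f$, rather than global Lipschitz continuity, sufficient.

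\emph{Well-posedness.} Given $u$ in the space in question, I would set $M=\|u\|_{L^\infty}<\infty$ and let $L=L(M)$ be the Lipschitz constant of $f$ on $[-M,M]$. The chain rule for Sobolev functions gives $(f\circ u)'=f'(u)\,u'$ a.e.\ (with the convention that this product is $0$ wherever $u'=0$), so $|(f\circ u)'|\le L|u'|$ and $\nabla T_f(u)\in L^A(\Omega)$. For $T_f(u)\in L^A(\Omega)$ one distinguishes cases: if $\Omega$ is bounded then $f(u)\in L^\infty(\Omega)\subseteq L^A(\Omega)$; if $\Omega$ is unbounded and $f(0)=0$, then $|f(u)|\le L|u|$ yields $f(u)\in L^A(\Omega)$. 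In the homogeneous setting, the assumption $f(0)=0$ additionally gives $\widehat{T_f(u)}=f(\widehat u)$, so that $T_f(u)$ vanishes on $\partial\Omega$ in the sense of \eqref{V1A0}, while $|f(u)|\le L|u|$ makes the super-level sets of $f(u)$ of finite measure. Thus $T_f$ maps the space into itself.

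\emph{Continuity.} Let $u_k\to u$ modularly in the space, say with constant $\lambda$; since \eqref{Alambda} permits enlarging $\lambda$, we may assume $\lambda\ge1$. Modular convergence forces $\{u_k\}$ to be bounded in norm, whence by \eqref{1demb} or \eqref{embV0R} we get $M:=\sup_k\|u_k\|_{L^\infty(\Omega)}<\infty$; let $L=L(M)$ be the associated Lipschitz constant of $f$, and recall that $u_k\to u$ in measure (from non-degeneracy of $A$, resp.\ from the one-dimensional Poincar\'e--Sobolev inequality applied to $u_k-u$). The $L^A$-part of the convergence $T_f(u_k)\to T_f(u)$ follows from $|f(u_k)-f(u)|\le L|u_k-u|$ together with \eqref{Alambda}. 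For the gradients I would write
\begin{align}
\nabla T_f(u_k)-\nabla T_f(u)=f'(u_k)\,(u_k'-u')+\bigl(f'(u_k)-f'(u)\bigr)u',
\end{align}
the first summand being controlled again via \eqref{Alambda} and the modular convergence of $\{u_k'\}$. The crux is the second summand: since $f'$ is merely bounded and measurable, $f'(u_k)-f'(u)$ need not converge to $0$ in measure, and a direct passage to the limit is unavailable. To handle it I would regularize $f$ on $[-M,M]$ by $f_\delta=f\ast\rho_\delta\in C^1$, Lipschitz with the same constant and with $f_\delta'\to f'$ a.e.\ and in $L^1_{loc}$; for the $C^1$ function $f_\delta$ the corresponding term converges (in the modular sense, by dominated convergence exploiting $u_k\to u$ in measure), while the regularization error $(f'-f_\delta')(u_k)\,u_k'$ is estimated \emph{uniformly in $k$} by using the convexity of $A$ to factor out the bounded quantity $|f'-f_\delta'|$, the uniform integrability (including tightness when $|\Omega|=\infty$) of $\{A(c|u_k'|/\mu)\}$ inherited from the modular convergence of $\{u_k'\}$, and the classical fact that $u'=0$ a.e.\ on $u^{-1}(N)$ for every null set $N\subseteq\R$ — which is precisely what renders $f'(u)\,u'$ meaningful and forces the error to vanish as $\delta\to0$. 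A subsequence argument then promotes this to modular convergence of $\nabla T_f(u_k)$ to $\nabla T_f(u)$. Throughout, the distinction between $W^{1,A}$ and $V^{1,A}_0$ and between bounded and unbounded $\Omega$ enters only through which of \eqref{1demb0}, \eqref{1demb0inf}, \eqref{1demb} is invoked for the $L^\infty$ bound, and through the discussion of $T_f(u)\in L^A(\Omega)$ above. I expect the main obstacle to be, as in the higher-dimensional theorems, the term $\bigl(f'(u_k)-f'(u)\bigr)u'$; the technical lemmas collected in Section~\ref{sec:tech} should supply exactly the modular uniform-integrability estimates needed to make the uniform-in-$k$ control of the regularization error rigorous.
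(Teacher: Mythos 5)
The paper's proof is far shorter than what you propose and takes a different route. After establishing the uniform $L^\infty$ bound $M := \sup_k \|u_k\|_{L^\infty(\Omega)} < \infty$ via the one-dimensional embeddings \eqref{1demb} (resp.\ \eqref{1demb0}, \eqref{1demb0inf} in the homogeneous case), the paper simply replaces $f$ by its truncation $f_M$ defined in \eqref{fM}, which is globally Lipschitz and satisfies $f(u)=f_M(u)$, $f(u_k)=f_M(u_k)$ for all $k$. The conclusion then follows immediately by applying Theorem~\ref{theo-conA} (resp.\ Theorem~\ref{coro-conA}) to $f_M$, both of which are observed to hold, with the same proof, for $n=1$. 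You correctly identify the key ingredient — the $L^\infty$ embedding reducing local Lipschitz continuity to global — but you then re-prove the globally Lipschitz case from scratch with a mollification argument instead of invoking the already-established Theorems~\ref{theo-conA}/\ref{coro-conA} on $f_M$, which is much more economical.

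Beyond being unnecessarily long, your direct argument has a genuine gap precisely where you flag ``the main obstacle''. To bound the regularization error involving $(f'-f_\delta')(u_k)u_k'$ uniformly in $k$, you propose to use the fact that $u'=0$ a.e.\ on $u^{-1}(N)$ for every null set $N$, together with the uniform integrability of $\{A(c|u_k'|/\mu)\}$. But for a fixed $\delta>0$, the set where $f'$ and $f_\delta'$ differ appreciably has \emph{small positive} measure, not zero measure, so the null-set fact does not apply; what must be controlled, uniformly in $k$, is the measure of the $u_k$-preimage of a fixed small set intersected with $\{|u_k'|\geq\varepsilon\}$. The tool for this in the paper's proof of Theorem~\ref{theo-conA} is the Markov estimate $|\{|u_k'|\geq\varepsilon\}\cap u_k^{-1}(S)|\leq\tfrac{1}{\varepsilon}\int_\Omega\chi_S(u_k)|u_k'|\,\d x$ combined with the continuity result \eqref{continuity} and the coarea formula, which give $\limsup_k\int_\Omega\chi_S(u_k)|u_k'|\,\d x=\int_S\mathcal H^{0}(\{u=t\})\,\d t\to0$ as $|S|\to0$. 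You invoke neither of these, and the lemmas of Section~\ref{sec:tech} supply modular-convergence and uniform-integrability facts but not this preimage-measure control. So the idea of replacing Lusin's theorem by mollification is reasonable in spirit, but the error estimate is exactly the hard step of Theorem~\ref{theo-conA}, and it is not closed in your proposal — which is one more reason the paper's truncate-and-invoke argument is the better route.
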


\begin{rem}  {\rm Under the assumption that $A\in \Delta_2$, the modular topology can be replaced with the norm topology in Theorems~\ref{theo-conA},~\ref{coro-conA}, and~\ref{1d}. Indeed, the two topologies are equivalent under this assumption. By contrast, the continuity of $T_f$ in \eqref{jan23bis} and \eqref{jan23_0} can fail in the norm topology if $A \notin \Delta_2$, as shown in Example~\ref{ex:counter}.
\\ On the other hand, the norm continuity can be restored for any Young function $A$ if the spaces  $W^{1, A}(\Omega)$ and $V^{1, A}_0(\Omega)$ are replaced with $W^{1}E^A(\Omega)$ and $V^{1}_0E^A(\Omega)$ in \eqref{jan23bis} and \eqref{jan23_0}. Namely, 
one has
 \begin{align}
    \label{jan23E}
    T_f : W^{1}E^A(\Omega) \to W^{1}E^A(\Omega)
\end{align}
and 
 \begin{align}
    \label{jan23_0E}
    T_f : V^{1}_0E^A(\Omega) \to V^{1}_0E^A(\Omega)
\end{align}
in the norm topology, for any Young function $A$, under the same assumptions on $\Omega$ and $f$ as in Theorems \ref{theo-conA}
and \ref{coro-conA},
respectively. This can be verified via a close inspection of the proofs of Theorems \ref{theo-conA}
and \ref{coro-conA}.
}
\end{rem}

We next focus on the continuity of $T_f$ in non-homogeneous Orlicz-Sobolev spaces when $f$ is not necessarily Lipschitz continuous but just locally Lipschitz with a derivative subject to some growth condition. In this case, one has to require regularity assumptions on the open set $\Omega$ under which a sharp Sobolev type embedding holds, allowing for different Orlicz-Sobolev domain and target spaces for $T_f$. Namely, we consider
 \begin{align}
    \label{jan51}
    T_f : W^{1,A}(\Omega) \to W^{1,B}(\Omega)
\end{align}
in the modular topology, where $f$ is a locally Lipschitz continuous function subject to the condition
 \begin{align}\label{jan50}
     |f'(t)| \leq \kappa \Cf(\kappa |t|) \quad \text{for a.e. $t\in \R$,}
 \end{align}
 for some non-decreasing, continuous function $\Cf : [0, \infty) \to [0, \infty)$ which is not identically equal to $0$, and 
 some constant $\kappa > 0$.  Note that the case when $E=0$ everywhere is trivial, as then $f$ is a constant, and hence $T_f$ agrees with the same constant. Here, and in what follows, $f'$ denotes a Borel representative of the derivative of $f$, which classically exists a.e. in $\mathbb R$. 

  The relation between the functions $A$, $B$, and $\Cf$  takes an easy form when $|\Omega|<\infty$. The assumptions in the general case are inevitably more articulate since they have to account for the behaviors of the functions $A$, $B$, and $\Cf$, not only near infinity but also near zero.
  \\ As already mentioned in Subsection \ref{sub-iso},  if the condition \eqref{A-0} is not explicitly assumed on $A$, then the function $A_n$ is defined with $A$ replaced, if necessary, with an equivalent Young function near infinity satisfying \eqref{A-0}. The conditions imposed on $A_n$ only involve large values of its argument and will not be affected by the specific replacement chosen for $A$ near zero. 

  The following theorem concerns  the property ~\eqref{jan51} when $|\Omega|<\infty$.
\begin{theo}\label{theo:conB1}

Assume that $n\geq 2$ and $\Omega \in \mathcal G_{1/n'}$. Let $A$ be a  non-degenerate Young function, let $B$ be a Young function,  and let $f : \R \to \R$ be a locally Lipschitz continuous function.
\begin{enumerate}[(i)]
    \item Assume that $A$ satisfies the condition \eqref{A-convinf}. Then \eqref{jan23bis} holds.
    \item Assume that $A$ satisfies the condition \eqref{A-inf}  and $f$ fulfills \eqref{jan50}. If there exists $t_0\geq 0$ such that
\begin{equation}\label{eq:inq-ass2}
B(t\Cf(H_n(t)))\leq A(t)
  \quad \text{ for $t \geq t_0,$}
        \end{equation}
        then  \eqref{jan51} holds. 
        Moreover, if  $u_k \to u$ modularly in  $W^{1, A}(\Omega)$ with constant $\lambda$, then $f(u_k) \to f(u)$ modularly in $W^{1, B}(\Omega)$  with constant $24\vk \max\{\lambda, \|u\|_{W^{1, A}(\Omega)}\}$.
\end{enumerate}
\end{theo}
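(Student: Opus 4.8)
The strategy is to reduce everything to modular estimates and combine the chain rule for Sobolev functions, the hypothesis \eqref{jan50} on $f'$, and the Poincar\'e-Sobolev inequality \eqref{eq:sobemb} (applied with exponent $\sigma=n$, since $\Omega\in\mathcal G_{1/n'}$). First I would show that $T_f$ is well-defined as a map \eqref{jan51}: given $u\in W^{1,A}(\Omega)$, the composition $f(u)$ is weakly differentiable with $\nabla f(u)=f'(u)\nabla u$ a.e.\ (valid because $f$ is locally Lipschitz and $u\in W^{1,1}(\Omega)$; note $W^{1,A}(\Omega)\hookrightarrow W^{1,1}(\Omega)$ when $|\Omega|<\infty$). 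Then I need to bound $\int_\Omega B(|f(u)|/\mu)\d x$ and $\int_\Omega B(|f'(u)\,\nabla u|/\mu)\d x$ for a suitable $\mu$. For the gradient term, \eqref{jan50} gives $|f'(u)\nabla u|\le \kappa\,\Cf(\kappa|u|)|\nabla u|$; using that $A$ satisfies \eqref{A-inf} so that $u$ enjoys the embedding \eqref{embW1A}--\eqref{eq:sobemb} into $L^{A_n}(\Omega)$, one controls $|u|$ pointwise by (roughly) $H_n^{-1}$ evaluated at a quantity built from $\|u\|_{W^{1,A}}$, which is exactly the chain producing the composition $\Cf(H_n(\cdot))$ appearing in \eqref{eq:inq-ass2}. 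The hypothesis \eqref{eq:inq-ass2}, $B(t\,\Cf(H_n(t)))\le A(t)$ for large $t$, is precisely what is needed so that $B(|f'(u)\nabla u|)\lesssim A(|\nabla u|)$ modularly, after dealing separately with the region where $|u|$ or $|\nabla u|$ is small (there $B$ is controlled by elementary bounds and the finiteness of $|\Omega|$). For the zeroth-order term $B(|f(u)|)$, since $f(0)$ may be nonzero one writes $|f(u)|\le |f(0)|+\int_0^{|u|}|f'(s)|\d s$ and bounds the integral using \eqref{jan50} and monotonicity of $\Cf$, then invokes the embedding $W^{1,A}(\Omega)\hookrightarrow L^{\widehat A_n}(\Omega)$ from \eqref{feb4} together with \eqref{eq:inq-ass2}; again the finiteness of $|\Omega|$ absorbs the constant term.

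With well-definedness in hand, the continuity claim and the explicit constant $24\kappa\max\{\lambda,\|u\|_{W^{1,A}(\Omega)}\}$ require a quantitative modular estimate for differences. Suppose $u_k\to u$ modularly in $W^{1,A}(\Omega)$ with constant $\lambda$. I would use the pointwise bounds
\[
|f(u_k)-f(u)|\le \Big(\sup_{|s|\le\max\{|u_k|,|u|\}}|f'(s)|\Big)\,|u_k-u|
\]
and, for the gradients,
\[
|\nabla f(u_k)-\nabla f(u)|=|f'(u_k)\nabla u_k - f'(u)\nabla u|\le |f'(u_k)||\nabla u_k-\nabla u| + |f'(u_k)-f'(u)||\nabla u|.
\]
The first term on the right is handled exactly as in the well-definedness step, with $u_k-u$ in place of $u$ and with the modular convergence $\int_\Omega A(|\nabla u_k-\nabla u|/\lambda)\to0$ providing the smallness; the factor $|f'(u_k)|$ is controlled via \eqref{jan50} using a uniform bound on the modulars $\int_\Omega A(|\nabla u_k|)\d x$, which converge and hence are bounded (this is where $\max\{\lambda,\|u\|_{W^{1,A}}\}$ enters). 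The delicate second term, $|f'(u_k)-f'(u)||\nabla u|$, is bounded by $2\kappa\Cf(\kappa\max\{|u_k|,|u|\})|\nabla u|$, which lies in $L^B(\Omega)$ (modularly, with a controlled constant) by the well-definedness argument applied to the fixed function $u$; since $u_k\to u$ in measure (modular convergence plus non-degeneracy of $A$, plus $|\Omega|<\infty$), one has $f'(u_k)\to f'(u)$ a.e.\ along a subsequence, and then the generalized dominated convergence theorem in the modular sense — i.e.\ Vitali's theorem, using the $L^B$ modular bound as the equi-integrable majorant — yields $\int_\Omega B(|f'(u_k)-f'(u)||\nabla u|/\mu)\to0$. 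Tracking constants through the splittings (chain rule, the triangle/convexity inequalities with the factor $\tfrac12$ in $B(\tfrac{a+b}{2})\le\tfrac12 B(a)+\tfrac12 B(b)$, and the embedding constants, all of which are absorbed into an explicit numerical factor) produces the stated constant $24\kappa\max\{\lambda,\|u\|_{W^{1,A}(\Omega)}\}$.

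The main obstacle I anticipate is the bookkeeping that turns \eqref{eq:inq-ass2} — an inequality valid only for $t\ge t_0$ — into a genuine modular domination valid on all of $\Omega$: one must carefully split $\Omega$ according to the sizes of $|u|$, $|\nabla u|$, and of $H_n(\cdot)$, handle the "small" regions using $|\Omega|<\infty$ and crude bounds on $B$ near zero, and ensure the embedding constant $c=c(\Omega)$ in \eqref{eq:sobemb} and the left-continuous inverse $H_n^{-1}$ interact correctly with the monotone rearrangement-type inequality $t\le A^{-1}(A(t))$. A secondary technical point is justifying the chain rule $\nabla f(u)=f'(u)\nabla u$ with $f$ merely locally Lipschitz: this needs $u\in W^{1,1}_{loc}$ together with the fact that $f'$ in \eqref{jan50} is a Borel representative, and care that $\{f\text{ not differentiable}\}$ is Lebesgue-null so that $f'(u)\nabla u$ is well-defined a.e.; both are standard but must be invoked cleanly. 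Finally, upgrading a.e.\ convergence along a subsequence to convergence of the full sequence of modular integrals uses the standard subsequence principle, which I would state once and apply throughout.
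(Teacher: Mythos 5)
Your well-definedness argument is broadly in the right spirit (the paper packages it via Lemma~\ref{lem:lem1}, a Young-type inequality $B(\Cf(s)t/2)\le c+A_n(s)+A(t)$ obtained from \eqref{eq:inq-ass2} by the O'Neil argument, rather than a pointwise control of $|u|$ by $H_n^{-1}$, but these amount to the same modular bookkeeping). The continuity step, however, contains a genuine gap.

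You split $\nabla f(u_k)-\nabla f(u)$ as $f'(u_k)(\nabla u_k-\nabla u)+(f'(u_k)-f'(u))\nabla u$ and then assert that, after passing to a subsequence, $f'(u_k)\to f'(u)$ a.e., so that Vitali's theorem handles the second term. This step is unjustified and, for general $f$, false. The function $f'$ is only a Borel representative of a derivative that exists a.e.\ in $\mathbb R$; it is not continuous, not even quasi-continuous in any usable sense. Pointwise convergence $u_k(x)\to u(x)$ does \emph{not} transfer to $f'(u_k(x))\to f'(u(x))$: if $f'$ is, say, the indicator of a fat Cantor set and $u_k=u+c_k$ with $c_k\to 0$, then $f'(u_k)$ need bear no relation to $f'(u)$ on a set of positive measure. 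Everything hinges on handling precisely this discontinuity; the Young-function and embedding estimates are the easy part.

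The paper circumvents this by a different and essential mechanism. It first decomposes $\Omega$ according to $\nabla u$ into $G_0=\{\nabla u=0\}$, $G_\varepsilon=\{0<|\nabla u|<\varepsilon\}$, and $\Omega_\varepsilon=\{|\nabla u|\ge\varepsilon\}$; the small-gradient pieces are dispatched by equi-integrability. On $\Omega_\varepsilon$ it invokes Lusin's theorem to produce a \emph{continuous} $g_\delta$ with $|g_\delta|\le\kappa\Cf(\kappa|\cdot|)$, agreeing with $f'$ off a set $S_\delta$ of measure $<\delta$, and then writes
$$
f'(u_k)\nabla u_k-f'(u)\nabla u
= \big(f'(u_k)-g_\delta(u_k)\big)\nabla u_k
+ \big(g_\delta(u_k)\nabla u_k - g_\delta(u)\nabla u\big)
+ \big(g_\delta(u)-f'(u)\big)\nabla u .
$$
The middle term converges modularly because $g_\delta$ is continuous. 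The first and third terms are supported where $u_k$ (resp.\ $u$) lands in $S_\delta$, and the key to showing these contributions vanish is the lower bound $|\nabla u|\ge\varepsilon$ together with the coarea formula, giving
$|\Omega_\varepsilon\cap u^{-1}(S_\delta)|\le\varepsilon^{-1}\int_{S_\delta}\mathcal H^{n-1}(\{u=t\})\,\d t\to 0$
as $\delta\to 0$, plus the continuity on $W^{1,1}(\Omega)$ of $v\mapsto\int_\Omega\chi_{S_\delta}(u)|\nabla v|\,\d x$ from~\cite{GBM,BL} to handle the $u_k$-dependence. None of these ingredients appears in your proposal, and without them the second term in your decomposition cannot be controlled. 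This is also why your derivation of the explicit constant $24\kappa\max\{\lambda,\|u\|_{W^{1,A}(\Omega)}\}$ is not substantiated: it is tied to the splitting into three pieces with factor $3$, followed by further factors of $2$, in the Lusin-based decomposition.
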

{\begin{rem}
    \label{rem:domains}
    Clearly, Theorem \ref{theo:conB1} continues to hold for sets $\Omega$ which are the finite union of sets from the class $\mathcal G_{1/n'}$.
\end{rem}}

  A counterpart of Theorem~\ref{theo:conB1} for arbitrary extension domains reads as follows.
\begin{theo}\label{theo:conB2}
Assume that $\Omega$ is an extension domain in $\rn$, with $n\geq 2$.  Let $A$ be a  non-degenerate Young function, let $B$ be a Young function,  and let $f : \R \to \R$ be a locally Lipschitz continuous function such that $f(0)=0$.
\begin{enumerate}[(i)]
    \item   Assume that $A$ satisfies the condition \eqref{A-convinf}. Then \eqref{jan23bis} holds.
    \item Assume that $A$ satisfies the condition \eqref{A-inf} and $f$ fulfills  \eqref{jan50}. 
Suppose that~\eqref{eq:inq-ass2} holds and there exist $t_1\geq 0$ and a Young function $\Df$ such that
\begin{equation}\label{eq:inq-assD}
    B(t\Cf(\Df^{-1}(A(t))))\leq A(t)
        \quad \text{ for } \quad  \text{$0\leq t \leq t_1\,$.}
    \end{equation}
and
\begin{equation}\label{eq:ass-D1}
         \limsup_{t \to 0}\frac{\Df(\lambda t)}{A(t)} < \infty \quad \text{for every $\lambda >0\,$.}
    \end{equation}
    Then  \eqref{jan51} holds.
\end{enumerate}
\end{theo}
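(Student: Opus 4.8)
The plan is to deduce Theorem~\ref{theo:conB2} from Theorem~\ref{theo:conB1} by exploiting the extension operator $\mathcal E$ from \eqref{feb7}. Since $\Omega$ is an extension domain, it is in particular (locally) in a class on which Theorem~\ref{theo:conB1} applies, but the cleanest route is to transfer the problem to $\rn$: for $u\in W^{1,A}(\Omega)$ set $v=\mathcal E u\in W^{1,A}(\rn)$, note that $v$ has finite measure support only in a suitable sense, and observe that $T_f$ commutes with restriction, i.e. $T_f(u)=\big(T_f(v)\big)|_\Omega$ because $\mathcal E u=u$ on $\Omega$. The subtlety is that $\rn$ has infinite measure, so Theorem~\ref{theo:conB1} does not directly apply; this is exactly where the hypothesis $f(0)=0$ and the extra conditions \eqref{eq:inq-assD}--\eqref{eq:ass-D1} on the behaviour near zero enter, since on a set of infinite measure the modular $\int A(|u|)$ forces $u$ to be small on a large portion of $\rn$, and the control of $f$ near $0$ becomes essential.

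The key steps, in order, would be the following. First I would reduce Part~(i) to Part~(ii) of Theorem~\ref{theo:conB1} composed with the extension: under \eqref{A-convinf} one has $W^{1,A}(\Omega)\to L^\infty(\Omega)$ by \eqref{feb4inf}, so a modularly convergent sequence is bounded in $L^\infty$, $f$ is Lipschitz on the relevant bounded interval, and the argument of Theorem~\ref{theo-conA} (in its extension-domain incarnation, using \eqref{feb7} and \eqref{mod-ext}) gives \eqref{jan23bis} directly; here $f(0)=0$ is used to ensure $f(v)$ still has the decay needed to lie in $W^{1,A}(\rn)$. Second, for Part~(ii), I would apply \eqref{mod-ext} to pass from modular convergence $u_k\to u$ in $W^{1,A}(\Omega)$ to modular convergence $\mathcal E u_k\to\mathcal E u$ in $W^{1,A}(\rn)$, possibly with a changed constant. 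Third, I would establish an analogue of Theorem~\ref{theo:conB1}(ii) valid on $\rn$ (or any infinite-measure set): the chain rule $\nabla T_f(v)=f'(v)\nabla v$ holds since $f$ is locally Lipschitz and $v\in W^{1,1}_{\mathrm{loc}}$, and one estimates $\int_{\rn} B(|f'(v)\nabla v|/\mu)$ by splitting $\rn$ into the region where $|v|$ is large and the region where $|v|$ is small. On the large-$|v|$ region one uses the Sobolev embedding \eqref{feb4} together with the growth bound \eqref{jan50} and the near-infinity inequality \eqref{eq:inq-ass2}, exactly as in Theorem~\ref{theo:conB1}. On the small-$|v|$ region one uses \eqref{jan50} again, the near-zero inequality \eqref{eq:inq-assD} with the auxiliary function $\Df$, and the fact — which is where \eqref{eq:ass-D1} is invoked — that $\int_{\rn}\Df(\lambda|v|)\,\d x\lesssim\int_{\rn}A(|v|)\,\d x<\infty$ for every $\lambda$, so that $\Df^{-1}(A(t))$ is a legitimate substitute for $H_n(t)$ in controlling the modular of the gradient term near zero. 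Fourth, I would combine the two regions via the convexity/subadditivity relation \eqref{Alambda} (absorbing the factor-of-two losses into the constant) to conclude $f(v)\in W^{1,B}(\rn)$ and, running the same estimates on differences $f(v_k)-f(v)$ after passing to a subsequence converging a.e. (using that modular convergence implies convergence in measure for non-degenerate $A$), that $f(v_k)\to f(v)$ modularly in $W^{1,B}(\rn)$. Finally, restricting back to $\Omega$ and using that the $W^{1,B}(\Omega)$ modular is dominated by the $W^{1,B}(\rn)$ modular yields \eqref{jan51}.

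The main obstacle I anticipate is the near-zero analysis on the infinite-measure set $\rn$: unlike the finite-measure case, one cannot simply absorb the small values of $v$ into a term controlled by $|\Omega|A^{-1}$-type quantities, and the role of $\Df$ is precisely to provide a Young function that simultaneously (a) is large enough near zero that \eqref{eq:inq-assD} gives the pointwise bound $B(|f'(v)\nabla v|)\le A(|\nabla v|)$ there, and (b) is small enough near zero, by \eqref{eq:ass-D1}, that $\int\Df(\lambda|v|)<\infty$ whenever $\int A(|v|)<\infty$. Verifying that these two competing demands can be met and that the substitution $H_n\rightsquigarrow\Df^{-1}\circ A$ is valid — in particular checking the monotonicity and inverse relations so that the Sobolev inequality \eqref{sonV1o}/\eqref{feb4} can still be fed the right argument — is the technical heart of the proof. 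A secondary point requiring care is tracking the modular-convergence constants through the extension operator and through the two-region split, so that the final constant depends only on $\lambda$, $\kappa$, $\|u\|_{W^{1,A}(\Omega)}$ and the extension norm, analogously to the explicit constant recorded in Theorem~\ref{theo:conB1}(ii); I expect this to be routine but bookkeeping-heavy, and I would handle it by using \eqref{Alambda} at each step rather than tracking sharp constants.
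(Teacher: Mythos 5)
Your overall strategy --- extend to $\rn$ via $\mathcal E$ and redo a Theorem~\ref{theo:conB1}(ii)-type argument there, splitting by the size of $|v|$ --- is genuinely different from the paper's, which never works directly on $\rn$ in the main proof. The paper's route is a \emph{domain} splitting inside $\Omega$: pick a bounded $G\subset\Omega$ that is a finite union of $\mathcal G_{1/n'}$-domains with $|\Omega\setminus G|$ small, apply the already-proven Theorem~\ref{theo:conB1} on $G$ to get the convergence there, and bound the contribution from $\Omega\setminus G$ using the uniform-integrability of $\Df(\vk|u_k|)$, $A(|u_k|/2\lambda_c)$, etc.~furnished by Lemmas~\ref{int-conv} and~\ref{lem:convED}. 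The extension operator appears only inside Lemma~\ref{lem:convED}, whose sole purpose is to establish $W^{1,A}(\Omega)\to E^{\Df}(\Omega)$ and $\|u_k-u\|_{L^\Df(\Omega)}\to 0$; everything else stays on $\Omega$. This avoids rebuilding the whole Lusin/coarea/Vitali machinery of Theorem~\ref{theo:conB1} on an infinite-measure set, which your plan would require.

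There is also a concrete gap in your sketch. You assert that \eqref{eq:ass-D1} alone yields $\int_{\rn}\Df(\lambda|v|)\,\d x\lesssim\int_{\rn}A(|v|)\,\d x<\infty$ for every $\lambda$. This is false as stated: \eqref{eq:ass-D1} only controls the ratio $\Df(\lambda t)/A(t)$ as $t\to0$, so it bounds $\int_{\{|v|\leq t_1\}}\Df(\lambda|v|)$ by a multiple of $\int A(|v|)$, but gives you nothing on $\{|v|>t_1\}$. To handle the large-$|v|$ part you must first observe that the hypotheses constrain $\Df$ only near zero, so you are free to modify $\Df$ far from zero so that $\Df(t)\leq A_n(t)$ for $t\geq\bar t$, and then invoke the Sobolev embedding $W^{1,A}(\rn)\to L^{\widehat A_n}(\rn)$ to control $\int_{\{|v|>t_1\}}\Df(\lambda|v|)$. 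This is exactly what the paper's Lemma~\ref{lem:convED} packages, and it is not a triviality: the proof there also needs a truncation via $g_s$ and the homogeneous Sobolev inequality \eqref{sonV1o}. Until you supply this step, your ``region where $|v|$ is small'' estimate is not closed. Secondarily, you do not address how the finite-measure devices in Theorem~\ref{theo:conB1}(ii) (the $|\Omega|B(\cdot)$ terms, the Poincar\'e inequality \eqref{eq:sobemb} using the average $u_\Omega$, the appeals to Vitali) transfer to $\rn$; the paper sidesteps all of this by only ever invoking Theorem~\ref{theo:conB1} on bounded nice subdomains. Your route may well be feasible, but as written it would need an argument replacing $L^\Df$-integrability and a careful re-derivation of the continuity estimates on $\rn$ that you have not given.
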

 \begin{rem}
The condition~\eqref{eq:inq-ass2} in Theorem~\ref{theo:conB1} and the condition~\eqref{eq:inq-assD} in Theorem ~\ref{theo:conB2} can be stated in an apparently more general formulation which allows for additional constants in the arguments of the functions $A, B,\Cf,\Df, H_n$. However, the content of the relevant theorems is not altered since replacements of Young functions with equivalent ones leave the associated Orlicz spaces unchanged.
\end{rem}

{If $\Cf$ is a (finite-valued) Young function instead of just a non-decreasing continuous function, in Part (ii) of Theorems \ref{theo:conB1} and \ref{theo:conB2}, then \eqref{jan51} also holds in the norm topology. In fact, one can even allow for the modular topology in $W^{1,A}(\Omega)$ and the norm topology in $W^{1,B}(\Omega)$.

\begin{prop}\label{rem:norm}
    Under the same hypotheses as in Part (ii) of Theorems \ref{theo:conB1} and \ref{theo:conB2}, assume, in addition, that $\Cf$ is a finite-valued Young function. Then \eqref{jan51} holds with $W^{1, A}(\Omega)$ and 
    $W^{1, B}(\Omega)$ endowed with the modular and the norm topology, respectively.
    \end{prop}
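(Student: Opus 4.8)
\emph{Outline.} By the characterization~\eqref{eq:normconv-def} of norm convergence in Orlicz spaces, the assertion is equivalent to the following: whenever $u_k\to u$ modularly in $W^{1,A}(\Omega)$, one has $f(u_k)\to f(u)$ modularly in $W^{1,B}(\Omega)$ with \emph{every} constant $\lambda>0$. Part~(ii) of Theorems~\ref{theo:conB1} and~\ref{theo:conB2} already delivers this for all sufficiently large $\lambda$ -- indeed for $\lambda\geq24\vk\max\{\lambda_0,\|u\|_{W^{1,A}(\Omega)}\}$ if $u_k\to u$ modularly with constant $\lambda_0$ -- so only arbitrarily small $\lambda$ remain to be treated. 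The plan is to re-run the proof of that part keeping $\lambda$ as a free parameter; the new ingredient is the convexity of $\Cf$, which, together with $\Cf(0)=0$, gives $\Cf(\theta t)\leq\theta\,\Cf(t)$ for $\theta\in(0,1]$ and $t\geq0$, and similarly for $B$.

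Let me recall the structure of the proof of Part~(ii). Via the chain rule $\nabla f(v)=f'(v)\nabla v$ and the bound~\eqref{jan50}, one decomposes $\nabla f(u_k)-\nabla f(u)=f'(u_k)(\nabla u_k-\nabla u)+(f'(u_k)-f'(u))\nabla u$, while $|f(u_k)-f(u)|\leq\vk\,\Cf(\vk\max\{|u_k|,|u|\})\,|u_k-u|$. The quantity $|f'|\leq\vk\,\Cf(\vk|\cdot|)$ is controlled by means of the Orlicz--Sobolev inequalities of Subsection~\ref{sub-iso} -- e.g.~\eqref{eq:sobemb} when $|\Omega|<\infty$, and~\eqref{mod-ext} combined with~\eqref{sonV1o} for extension domains -- which bound $|u_k|$ and $|u|$ through the Sobolev conjugate, so that essentially $|u_k|\lesssim H_n(A^{-1}(\rho))$ and $|\nabla u_k|\lesssim A^{-1}(\rho)$ for a suitable $\rho$ with $\int_\Omega\rho\d x$ under control. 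The inequality~\eqref{eq:inq-ass2} (and, in the setting of Theorem~\ref{theo:conB2}, the near-zero inequality~\eqref{eq:inq-assD} together with~\eqref{eq:ass-D1}) then converts $B$ of these products into quantities dominated by a multiple of $\int_\Omega A(|\nabla u_k-\nabla u|/\lambda_0)\d x$, up to a remainder that vanishes as $k\to\infty$ by uniform integrability; the term $(f'(u_k)-f'(u))\nabla u$ tends to $0$ in $L^B(\Omega)$ by dominated convergence, using that $f'(u_k)\to f'(u)$ in measure after the usual reduction to a continuous representative of $f'$. When $\Cf$ is merely non-decreasing and continuous, the factors $\Cf(\vk|u_k|)$ are rigid, which is why the constant in these estimates is forced to involve $\vk$ and $\|u_k\|_{W^{1,A}(\Omega)}$. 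When $\Cf$ (and $B$) are convex, instead, the small parameter $\lambda$ can be carried inside the arguments of $\Cf$ and $B$ and absorbed through~\eqref{eq:inq-ass2} and~\eqref{eq:inq-assD}, so that the entire chain of estimates goes through with $24\vk\max\{\lambda_0,\|u\|_{W^{1,A}(\Omega)}\}$ replaced by an arbitrary $\lambda>0$. This yields modular convergence of $f(u_k)$ to $f(u)$ in $W^{1,B}(\Omega)$ with every constant, hence, by~\eqref{eq:normconv-def}, norm convergence; along the way one also sees that $T_f$ maps $W^{1,A}(\Omega)$ into $W^{1}E^B(\Omega)$.

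The step I expect to be the main obstacle is checking that this rescaling is genuinely compatible with~\eqref{eq:inq-ass2} and with~\eqref{eq:inq-assD}--\eqref{eq:ass-D1}: since $A$, $B$, and $\Df$ are subjected to no $\Delta_2$-type condition, one must split $\Omega$ according to whether the relevant argument is large or small and verify, in each regime, that the factor $\lambda$ can be moved inside the Young functions using only their convexity -- in particular that multiplicative constants larger than $1$ coming from the Sobolev inequalities and from~\eqref{jan50} do not wreck the bound. It is here that the hypothesis that $\Cf$ be a Young function, and not merely a non-decreasing continuous function, is used in an essential way.
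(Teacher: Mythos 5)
Your proposal takes essentially the same route as the paper. The decisive mechanism is exactly what you identify: since $\Cf$ is a Young function, the superhomogeneity $\mu\,\Cf(t)\leq\Cf(\mu t)$ for $\mu\geq 1$ (equivalently, $\Cf(\theta t)\leq\theta\,\Cf(t)$ for $\theta\in(0,1]$) lets one rewrite
\[
B\Big(\tfrac{C\Cf(\vk|v|)|\nabla v|}{\lambda}\Big)\leq B\Big(\tfrac{C\Cf(\vk\tfrac{\Lambda}{\lambda}|v|)|\nabla v|}{\Lambda}\Big)
\]
for any $0<\lambda\leq\Lambda$, after which Lemma~\ref{lem:lem1} (resp.\ Lemma~\ref{lem:inqD}) applies and the rest of the proof of Theorem~\ref{theo:conB1} (resp.\ Theorem~\ref{theo:conB2}) goes through verbatim with $\lambda$ replaced by $\Lambda=24\vk\max\{\lambda,\lambda_c,\|u\|_{W^{1,A}(\Omega)}\}$. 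The obstacle you flag at the end in fact dissolves and no case-splitting on the size of the argument is needed: the rescaling pushes the factor $\Lambda/\lambda$ only into the argument of $A_n$ (through $\Cf$), and there it is harmless since Lemma~\ref{lem:convEAn} gives \emph{norm} convergence $u_k\to u$ in $L^{A_n}(\Omega)$, hence equiintegrability of $\{A_n(\vk\tfrac{\Lambda}{\lambda}|u_k|)\}$ at every scale, while the factor inside $A$ stays controlled because $\Lambda$ is large compared with $\lambda_c$.
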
}

If $A \in \Delta_2$ near zero, then the optimal choice for $\Df$ in Part (ii) of Theorem \ref{theo:conB2} is $A$. 
 Therefore, the following corollary holds.

\begin{coro}\label{coro:DoubleA}
 Assume that  $\Omega$ is an extension domain in $\rn$, with $n\geq 2$. Let $A$ and $B$ be Young functions. Assume that $A \in \Delta_2$ near zero and satisfies \eqref{A-inf}. Let $f : \R \to \R$ be a locally Lipschitz continuous function fulfilling the condition~\eqref{jan50} and such that $f(0)=0$.
Suppose that  \eqref{eq:inq-ass2} is satisfied  
    and
    \begin{equation}\label{eq:inq-assDbis}
      B(t\Cf(t)) \leq A(t)
        \quad \text{ for } \quad \text{$0\leq t \leq t_1$}
    \end{equation}
    for some $t_1>0$.
    Then   \eqref{jan51} holds.
     \end{coro}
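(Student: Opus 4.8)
The goal of Corollary~\ref{coro:DoubleA} is to derive a special case of Theorem~\ref{theo:conB2}, Part (ii), from the theorem itself. The plan is therefore to exhibit a choice of the Young function $\Df$ in Theorem~\ref{theo:conB2} for which all the hypotheses of that theorem reduce, under the additional assumption $A\in\Delta_2$ near zero, to the ones assumed here. The natural candidate, as the remark preceding the corollary indicates, is $\Df = A$. With this choice, condition~\eqref{eq:inq-assD} becomes $B(t\Cf(A^{-1}(A(t)))) \leq A(t)$ for $0\leq t\leq t_1$; since $t\leq A^{-1}(A(t))$ always holds and $\Cf$ is non-decreasing, while $A$ is convex so that $t\Cf(\cdot)\mapsto B(t\Cf(\cdot))$ is non-decreasing in the inner argument, the hypothesis~\eqref{eq:inq-assDbis} of the corollary, namely $B(t\Cf(t))\leq A(t)$, must be shown to imply~\eqref{eq:inq-assD} with $\Df=A$. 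Here is the one subtle point: $A^{-1}(A(t))$ can strictly exceed $t$ (precisely on intervals where $A$ is constant), so $B(t\Cf(A^{-1}(A(t))))$ could a priori be larger than $B(t\Cf(t))$. This is the step I expect to need the most care.

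To handle it, I would argue that $A\in\Delta_2$ near zero forces $A$ to be non-degenerate near zero (as recorded in the background section), hence strictly increasing on a right neighbourhood of $0$, so that $A^{-1}(A(t)) = t$ for all sufficiently small $t$, say $0\leq t\leq t_2$. Then for $0\leq t\leq \min\{t_1,t_2\}$ the left-hand side of~\eqref{eq:inq-assD} with $\Df=A$ equals exactly $B(t\Cf(t))$, which is $\leq A(t)$ by~\eqref{eq:inq-assDbis}; this gives~\eqref{eq:inq-assD} with $t_1$ replaced by $\min\{t_1,t_2\}$, which is all that is required. Actually one can be even more economical: since $\Cf$ is non-decreasing one always has $\Cf(A^{-1}(A(t)))\geq\Cf(t)$, so~\eqref{eq:inq-assDbis} does not immediately transfer; it is precisely the non-degeneracy near zero that saves the day, and it must be invoked explicitly.

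Next I would verify condition~\eqref{eq:ass-D1} with $\Df=A$: it asks that $\limsup_{t\to 0}\frac{A(\lambda t)}{A(t)}<\infty$ for every $\lambda>0$. For $\lambda\leq 1$ this is immediate from monotonicity of $A$ and the bound $A(\lambda t)\leq A(t)$. For $\lambda> 1$, writing $\lambda\leq 2^m$ for some integer $m$ and iterating the $\Delta_2$-inequality~\eqref{eq:ADelta-2} near zero $m$ times yields $A(\lambda t)\leq A(2^m t)\leq c^m A(t)$ for all sufficiently small $t$, with $c$ the $\Delta_2$-constant; hence the $\limsup$ is at most $c^m<\infty$. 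This is exactly the standard consequence of $\Delta_2$ near zero. With $\Df=A$ thus shown to satisfy~\eqref{eq:inq-assD} and~\eqref{eq:ass-D1}, and with~\eqref{eq:inq-ass2}, the growth bound~\eqref{jan50}, $f(0)=0$, local Lipschitz continuity of $f$, and $\Omega$ an extension domain all assumed directly in the corollary, every hypothesis of Theorem~\ref{theo:conB2}, Part (ii), is met, and the conclusion~\eqref{jan51} follows at once. The only genuine obstacle, as noted, is the passage from~\eqref{eq:inq-assDbis} to~\eqref{eq:inq-assD}, which hinges on observing that $\Delta_2$ near zero excludes flat pieces of $A$ near the origin so that $A^{-1}\circ A$ is the identity there.
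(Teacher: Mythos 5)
Your proposal is correct and follows exactly the route the paper intends: the text preceding the corollary simply says that $\Delta_2$ near zero makes $\Df = A$ the admissible choice in Theorem~\ref{theo:conB2}, Part (ii), and omits a detailed verification. You have supplied that verification correctly, including the genuinely subtle point that $\Delta_2$ near zero forces $A$ to be non-degenerate, hence strictly increasing near $0$, so that $A^{-1}(A(t))=t$ there and \eqref{eq:inq-assDbis} really does yield \eqref{eq:inq-assD} with $\Df=A$; the iterated-$\Delta_2$ check of \eqref{eq:ass-D1} is also standard and sound.
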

On the other hand, one can show that the choice $\Df=A_n$  in  Part (ii) of Theorem~\ref{theo:conB2} is always admissible, although not optimal even if $A$ and $B$ are power functions and hence the corresponding spaces are classical Sobolev spaces.
\begin{coro}\label{coro:An}
  Let  $\Omega$ be an extension domain in $\rn$, with $n\geq 2$. Let $A$ and $B$ be Young functions. Assume that $A$ is non-degenerate and satisfies \eqref{A-inf}.
  Let $f : \R \to \R$ be a locally Lipschitz continuous function fulfilling the condition~\eqref{jan50} and such that $f(0)=0$.
Suppose that  \eqref{eq:inq-ass2} is satisfied with  $t_0 = 0$.
    Then   \eqref{jan51} holds.
     \end{coro}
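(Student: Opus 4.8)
We deduce Corollary~\ref{coro:An} from Part~(ii) of Theorem~\ref{theo:conB2}. The plan is to check that $\Df=A_n$ is an admissible choice there. All the structural hypotheses of that part --- $\Omega$ an extension domain, $A$ non-degenerate and obeying \eqref{A-inf}, $B$ a Young function, $f$ locally Lipschitz with $f(0)=0$ and fulfilling \eqref{jan50}, and inequality \eqref{eq:inq-ass2} --- are granted, the last one for \emph{every} $t\ge 0$ since $t_0=0$. So the only thing to do is to verify, with $\Df=A_n$, conditions \eqref{eq:inq-assD} and \eqref{eq:ass-D1} for a suitable $t_1>0$.

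For \eqref{eq:inq-assD} the key point is the comparison
\[ A_n^{-1}(A(t))\le H_n(t)\qquad\text{for all sufficiently small } t\ge 0. \]
This follows from the definition $A_n=A\circ H_n^{-1}$ in \eqref{sobconj}: since $A$ is non-degenerate it is strictly increasing on the interval where it is finite, so $A^{-1}(A(t))=t$ whenever $A(t)<\infty$; moreover $H_n$ is strictly increasing and continuous near $0$ (its integrand in \eqref{Hn} being positive there), so every $s>H_n(t)$ satisfies $A\big(H_n^{-1}(s)\big)>A(t)$, which by the definition \eqref{A-inverse} of the generalized inverse yields the displayed bound. Since $\Cf$ is non-decreasing, this gives $t\,\Cf\big(A_n^{-1}(A(t))\big)\le t\,\Cf\big(H_n(t)\big)$, and the monotonicity of $B$ together with \eqref{eq:inq-ass2} then yields $B\big(t\,\Cf(A_n^{-1}(A(t)))\big)\le B\big(t\,\Cf(H_n(t))\big)\le A(t)$, which is exactly \eqref{eq:inq-assD}.

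For \eqref{eq:ass-D1} it suffices to show $A_n(t)\le A(t)$ for small $t$. Indeed, since $A(s)/s$ is non-decreasing by \eqref{incr}, the integrand in \eqref{Hn} satisfies $\big(\tau/A(\tau)\big)^{1/(n-1)}\ge\big(s/A(s)\big)^{1/(n-1)}$ for $\tau\in[0,s]$, whence $H_n(s)\ge s\,A(s)^{-1/n}$; consequently $H_n(t)\ge t$, and so $H_n^{-1}(t)\le t$, as soon as $A(t)\le 1$, which gives $A_n(t)=A(H_n^{-1}(t))\le A(t)$. For any $\lambda>0$, property \eqref{Alambda} then gives $A_n(\lambda t)\le\max\{1,\lambda\}\,A_n(t)\le\max\{1,\lambda\}\,A(t)$ near $0$, so $\limsup_{t\to 0}A_n(\lambda t)/A(t)\le\max\{1,\lambda\}<\infty$. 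This is \eqref{eq:ass-D1}, and an application of Theorem~\ref{theo:conB2}(ii) then gives \eqref{jan51}.

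The hard part will be the case in which $A$ does not satisfy \eqref{A-0}: there $A_n$ and $H_n$ are, by convention, built from a Young function equivalent to $A$ near infinity but altered near zero, and both \eqref{eq:inq-assD} and \eqref{eq:ass-D1} hinge on the behaviour of $A_n$ near zero, hence on the specific alteration. One must choose it so that the two arguments above survive; a power alteration $\tau\mapsto\tau^q$ near zero with $q<n$ taken close to $n$ does the job, since it makes $A_n$ of power type $\tau^{nq/(n-q)}$ near zero with an arbitrarily large exponent, which simultaneously preserves the inequality $A_n^{-1}(A(t))\le H_n(t)$ and forces $A_n(t)\lesssim A(t)$ near zero. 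Some further routine bookkeeping with the right- and left-continuous generalized inverses enters as well, but it merely introduces multiplicative constants in the arguments of the Young functions, which by the remark following Theorem~\ref{theo:conB2} is immaterial. Finally, one should record that $\Df=A_n$ is generally not optimal: already for $A(t)=t^p$ and $B(t)=t^q$ it reduces to the Sobolev conjugate $A_n(t)\simeq t^{np/(n-p)}$, whereas a sharper interplay between $B$ and the growth of $f'$ is possible, as in Corollary~\ref{coro:DoubleA}.
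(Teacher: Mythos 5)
Your proof takes exactly the route the paper intends: reduce Corollary~\ref{coro:An} to Theorem~\ref{theo:conB2}(ii) with the choice $\Df=A_n$. The paper's own proof is a one-liner, ``The conclusion is a consequence of Theorem~\ref{theo:conB2} and Lemma~\ref{LemmaF=An},'' which is precisely your verification of \eqref{eq:ass-D1}; indeed your derivation of $H_n(s)\ge s\,A(s)^{-1/n}$ and hence $A_n(t)\le A(t)$ for small $t$ is essentially a re-proof of Lemma~\ref{LemmaF=An}, and your subsequent use of \eqref{Alambda} to pass to arbitrary $\lambda$ is fine (noting, as you do, that for $\lambda\le 1$ monotonicity suffices). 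Your explicit deduction of \eqref{eq:inq-assD} from \eqref{eq:inq-ass2} via $A_n^{-1}(A(t))\le H_n(t)$ is a step the paper does not spell out, so it is welcome; in fact, under the hypotheses $A$ is strictly increasing and continuous and $H_n$ is strictly increasing and continuous, so one has the cleaner identity $A_n^{-1}(A(t))=H_n(t)$ directly from the definitions of $A_n$, $A_n^{-1}$, and $H_n^{-1}$, and \eqref{eq:inq-assD} is then literally \eqref{eq:inq-ass2} restricted to small $t$.

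Your final paragraph, however, is where the proposal goes astray. You treat the case where \eqref{A-0} fails as ``the hard part'' and propose curing it by a power alteration $\tau\mapsto\tau^q$ near zero with $q$ close to $n$. This does not work: if $A$ decays super-polynomially near $0$ (e.g.\ $A(t)\simeq\exp(-1/t)$, for which \eqref{A-0} indeed fails), then $A_n$ built from such a replacement is of power type near zero, and $A_n(\lambda t)/A(t)$ blows up as $t\to 0$ for every choice of $q$, so \eqref{eq:ass-D1} is violated. The correct reading of the corollary is that the hypothesis ``\eqref{eq:inq-ass2} is satisfied with $t_0=0$'' carries \eqref{A-0} implicitly --- $H_n(t)$ must be finite for $t>0$ for the inequality to be nontrivial, and Lemma~\ref{LemmaF=An}, to which the paper's proof appeals, explicitly assumes \eqref{A-0}; compare also Theorem~\ref{theo:conB0}(ii), where \eqref{A-0} is stated alongside the assumption $t_0=0$. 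You should therefore delete the speculative repair and instead record \eqref{A-0} as a standing (implicit) hypothesis; with that, the first two paragraphs of your argument are complete and correct.
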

\bigskip
A version of the results about \eqref{jan51} for homogeneous spaces, namely for the continuity of
 \begin{align}
    \label{jan71}
    T_f : V^{1,A}_0(\Omega) \to V^{1,B}_0(\Omega)
\end{align}
in the modular topology, takes a simpler form. This is the content of the next theorem. Only the case when the condition \eqref{A-inf} is in force is considered, as the complementary situation when \eqref{A-convinf} is fully described by Theorem \ref{coro-conA}.
\begin{theo}\label{theo:conB0}
 Let $\Omega$ be an open set in $\rn$, with $n\geq 2$. Let $A$ and $B$ be Young functions. Assume that $A$ is non-degenerate and satisfies the condition  \eqref{A-inf}.
 Let $f : \R \to \R$ be a locally Lipschitz continuous function fulfilling the condition~\eqref{jan50} and such that $f(0) = 0$.
 \begin{enumerate}[(i)]
     \item  Assume that   $|\Omega| < \infty$. If the inequality \eqref{eq:inq-ass2} is fulfilled for some $t_0\geq 0$, then \eqref{jan71} holds.
\item 
Assume that $A$ satisfies the condition \eqref{A-0}.  If the inequality \eqref{eq:inq-ass2} is fulfilled with $t_0= 0$, then \eqref{jan71} holds.
    \end{enumerate}
\end{theo}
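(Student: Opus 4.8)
The plan is to deduce both parts from the non-homogeneous statements (Theorems \ref{theo:conB1} and \ref{theo:conB2}) by a localization/truncation argument that reduces a function $u\in V^{1,A}_0(\Omega)$ to one living on a set of finite measure, and then to exploit the homogeneous structure to obtain the cleaner hypotheses (no condition near zero, $t_0\ge 0$ allowed in the finite-measure case). For Part (i), where $|\Omega|<\infty$, the key observation is that by the Poincaré-type inequality recalled after \eqref{V1A0} we have $V^{1,A}_0(\Omega)\hookrightarrow W^{1,A}(\Omega)$, so every $u\in V^{1,A}_0(\Omega)$ lies in $W^{1,A}(\Omega)$, and since $f(0)=0$ the composition $f(u)$ again vanishes (in the sense of \eqref{V1A0}) on $\partial\Omega$; hence $f(u)\in V^{1,B}_0(\Omega)$ provided $f(u)\in W^{1,B}(\Omega)$, which is furnished by Theorem \ref{theo:conB1}(ii) — but here $\Omega$ is a \emph{general} finite-measure open set, not necessarily in $\mathcal G_{1/n'}$, so I cannot invoke that theorem directly. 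Instead I would run the proof of Theorem \ref{theo:conB1}(ii) with the Sobolev-Poincaré inequality \eqref{eq:sobemb} replaced by its homogeneous counterpart \eqref{sonV1o}, which holds on \emph{any} open set; this is precisely why the regularity assumption on $\Omega$ disappears. Thus the strategy for (i) is: repeat the argument behind Theorem \ref{theo:conB1}(ii) verbatim, using \eqref{embV1A0} and \eqref{sonV1o} in place of \eqref{embW1A} and \eqref{eq:sobemb}, and tracking the chain rule $\nabla f(u)=f'(u)\nabla u$ together with the growth bound \eqref{jan50} and the balance condition \eqref{eq:inq-ass2} exactly as before, to conclude modular continuity of $T_f$.

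For Part (ii), where $|\Omega|$ may be infinite but $A$ satisfies \eqref{A-0}, the point is that \eqref{A-0} guarantees the Sobolev conjugate $A_n$ is genuinely defined (no modification near zero needed) and that \eqref{sonV1o} holds; moreover, the homogeneous norm only sees the gradient, so there is no "near zero" issue for $u$ itself and hence no need for the auxiliary function $F$ and conditions \eqref{eq:inq-assD}–\eqref{eq:ass-D1} of Theorem \ref{theo:conB2}. The plan is to mimic the proof of Theorem \ref{theo:conB2}(ii) but, again, substituting \eqref{sonV1o} for \eqref{eq:sobemb}: one estimates $\int_\Omega B(|\nabla f(u_k)-\nabla f(u)|/\mu)\,\d x$ by splitting into a region where $|u_k|,|u|$ are bounded — controlled using continuity of $E$ and dominated convergence — and a region where they are large — controlled using \eqref{jan50}, \eqref{eq:inq-ass2} with $t_0=0$, and the a priori modular bound on $\int_\Omega A(|\nabla u_k|)$ coming from modular convergence in $V^{1,A}_0(\Omega)$. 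The requirement $t_0=0$ is exactly what lets the argument go through globally (all values of the argument, not just large ones, are handled by the single inequality \eqref{eq:inq-ass2}), which compensates for the absence of a finite-measure hypothesis that would otherwise let one discard small values.

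The main obstacle I anticipate is the bookkeeping for modular (as opposed to norm) convergence: because $A$ and $B$ are not assumed $\Delta_2$, one must carefully produce an explicit common constant $\mu$ (of the form $c\,\vk\max\{\lambda,\text{(something controlling }u)\}$, as in the quantitative statement of Theorem \ref{theo:conB1}(ii)) with which $f(u_k)\to f(u)$ modularly, and verify that the convexity manipulations \eqref{Alambda} and the splitting of the domain are compatible with this fixed choice of constant; in the homogeneous infinite-measure case the "something controlling $u$" must be the homogeneous quantity $(\int_\Omega A(|\nabla u|)\,\d x)^{1/n}$ appearing in \eqref{sonV1o} rather than a norm, and checking that the self-improving estimate closes up with this substitution is the delicate point. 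Once the constant is pinned down, convergence in measure of $u_k\to u$ (valid since $A$ is non-degenerate, as recalled after \eqref{modular}) plus the continuity of $E$ and an equi-integrability argument finish the proof of both parts.
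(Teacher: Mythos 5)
Your strategy is essentially correct and tracks the underlying argument in the paper, but the paper organizes it differently: Theorem~\ref{theo:conB0} is not proved directly --- it is deduced from the anisotropic Theorem~\ref{theo:conPhi0} by specializing $\Phi(\xi)=A(|\xi|)$, $\Psi(\xi)=B(|\xi|)$, with Lemma~\ref{recover} and Remark~\ref{iso-aniso} translating the hypotheses. Theorem~\ref{theo:conPhi0} is then proved exactly along the lines of your Part~(i) plan: rerun the proof of Theorem~\ref{theo:conB1}(ii) with the homogeneous Poincar\'e--Sobolev inequality \eqref{sonV1o} (anisotropically, \eqref{eq:sobemb-2}) in place of \eqref{eq:sobemb}, delete the displays that only concern the function and not its gradient, and replace Lemma~\ref{lem:convEAn} by the homogeneous Lemma~\ref{lem:Phin-conv}. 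So for Part~(i) you are on the money.

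Two points to sharpen in Part~(ii). First, the decomposition: the paper does a \emph{spatial} localization --- fix a bounded open $G\subseteq\Omega$, apply Part~(i) on $G$, and estimate the tail $\int_{\Omega\setminus G}$ --- rather than the split by the magnitude of $u_k,u$ that you sketch. Second, you correctly sense that $\Df$ and the hypotheses \eqref{eq:inq-assD}--\eqref{eq:ass-D1} of Theorem~\ref{theo:conB2} are unnecessary here, and that $t_0=0$ is the reason; but you should identify the mechanism precisely: $t_0=0$ makes Lemma~\ref{lem:lem1} (resp.\ Lemma~\ref{lem:Phi-Psi}) hold with additive constant $c=0$, so the pointwise bound reads $B(\Cf(\vk|v|)|\nabla v|/\lambda)\le A_n(\vk|v|)+A(2|\nabla v|/\lambda)$ with no spurious constant to integrate over the possibly infinite-measure set $\Omega\setminus G$, and the role of Lemma~\ref{lem:convED} (giving convergence in $L^{\Df}$) is taken over by the homogeneous version of Lemma~\ref{lem:convEAn} (in the paper, Lemma~\ref{lem:Phin-conv}), namely convergence in $L^{A_n}(\Omega)$. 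That replacement is available when $|\Omega|=\infty$ precisely because, under \eqref{A-0}, the inequality \eqref{sonV1o} holds on arbitrary open sets; this is what closes the ``bookkeeping for the common modular constant'' concern you flag at the end.
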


 The proof of Theorem~\ref{theo:conB0} is omitted, since its content is  a special case of the anisotropic result proved in Theorem~\ref{theo:conPhi0} -- see Remark \ref{iso-aniso}.

  Theorem~\ref{theo:conB1} can be generalized to include less regular domains $\Omega$ from the class $\mathcal G_{1/\sigma'}$, for any $\sigma \geq n$. The proof of this extension makes use of the embedding $W^{1,A}(\Omega) \to L^{A_\sigma}(\Omega)$, with $\sigma \geq n$, and 
parallels that of \ref{theo:conB1}. The proof will be omitted for brevity.

\begin{theo}\label{theo:conB1sigma}

Assume that  $\Omega \in \mathcal G_{1/\sigma'}$ for some $\sigma \geq n \geq 2$.  Let $A$ be a  non-degenerate Young function, let $B$ be a Young function,  and let $f : \R \to \R$ be a locally Lipschitz continuous function.
\begin{enumerate}[(i)]
    \item  Assume that  the condition \eqref{A-convinf-sigma} is in force. Then \eqref{jan23bis} holds.
\item Assume that the condition \eqref{A-convinf-sigma} fails. Suppose that $f$ satisfies the condition~\eqref{jan50}.  If there exists $t_0\geq 0$ such that
\begin{equation}
B(t\Cf(H_\sigma(t)))\leq A(t)
        \quad \text{ for $t \geq t_0,$}
        \end{equation}
        then  \eqref{jan51} holds.
\end{enumerate}
\end{theo}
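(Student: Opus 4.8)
The plan is to reproduce, with the exponent $n$ replaced by $\sigma$ throughout, the argument that proves Theorem~\ref{theo:conB1}. This substitution is legitimate because $n$ enters that argument \emph{only} through embeddings valid on domains of class $\mathcal G_{1/n'}$, whose exact analogues for $\mathcal G_{1/\sigma'}$ are at hand here: the Sobolev embedding \eqref{embW1A}, the Poincar\'e--Sobolev inequality \eqref{eq:sobemb}, the embedding \eqref{w1Ainfinity} into $L^\infty$ under \eqref{A-convinf-sigma}, and the embedding \eqref{embEsigma} into $E^{A_\sigma}$ when \eqref{A-convinf-sigma} fails; moreover the definitions \eqref{sobconj-sigma}--\eqref{Hn-sigma} of $A_\sigma$ and $H_\sigma$ are obtained from \eqref{sobconj}--\eqref{Hn} by the same replacement, so the chain of estimates is structurally identical. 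For Part~(i), under \eqref{A-convinf-sigma} one has $W^{1,A}(\Omega)\hookrightarrow L^\infty(\Omega)$. If $u_k\to u$ modularly in $W^{1,A}(\Omega)$, then $\{\|u_k\|_{W^{1,A}(\Omega)}\}$ is bounded, hence so is $\{\|u_k\|_{L^\infty(\Omega)}\}$; picking $M$ that bounds it and $\|u\|_{L^\infty(\Omega)}$, and letting $\widetilde f$ be a globally Lipschitz function agreeing with $f$ on $[-M,M]$ (e.g. $f$ composed with the truncation at level $M$), one has $T_f(u_k)=T_{\widetilde f}(u_k)$ and $T_f(u)=T_{\widetilde f}(u)$, and, since $|\Omega|<\infty$, Theorem~\ref{theo-conA} applied to $\widetilde f$ gives \eqref{jan23bis}.

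For Part~(ii), suppose \eqref{A-convinf-sigma} fails, so $A_\sigma$ is finite-valued and $W^{1,A}(\Omega)\hookrightarrow E^{A_\sigma}(\Omega)$. The chain rule gives $f\circ u\in W^{1,1}_{\mathrm{loc}}(\Omega)$ with $\nabla(f\circ u)=f'(u)\nabla u$ a.e., the product being $0$ a.e.\ on $\{\nabla u=0\}$. The core is a modular boundedness estimate: there exist constants $C,c$ depending only on $\kappa$, on the constant in \eqref{eq:sobemb}, on $t_0$ and on $|\Omega|$, such that $\int_\Omega A(|u|)\,\mathrm dx\le1$ and $\int_\Omega A(|\nabla u|)\,\mathrm dx\le1$ imply
\begin{equation*}
\int_\Omega B\Big(\tfrac{|f(u)-f(u_\Omega)|}{C}\Big)\,\mathrm dx+\int_\Omega B\Big(\tfrac{|f'(u)\nabla u|}{C}\Big)\,\mathrm dx\le c.
\end{equation*}
For the gradient term one uses $|f'(u)|\le\kappa\Cf(\kappa|u|)$ from \eqref{jan50} and splits $\Omega$ into $\{\kappa|u|\le H_\sigma(|\nabla u|)\}$, where monotonicity of $\Cf$ and the hypothesis $B(t\Cf(H_\sigma(t)))\le A(t)$, $t\ge t_0$, bound the integrand by $A(|\nabla u|)$ (the region $|\nabla u|<t_0$ being handled separately via continuity of $\Cf$), and its complement $\{|\nabla u|<H_\sigma^{-1}(\kappa|u|)\}$, where substituting $t=H_\sigma^{-1}(\kappa|u|)$ in the same hypothesis and using $A(H_\sigma^{-1}(\cdot))=A_\sigma(\cdot)$ bounds the integrand by a multiple of $A_\sigma(\kappa|u|)$, which is integrable by \eqref{embW1A}, \eqref{eq:sobemb} and $|\Omega|<\infty$ (to control $u_\Omega$). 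The term $f(u)-f(u_\Omega)$ is treated the same way, starting from $|f(u)-f(u_\Omega)|\le\kappa|u-u_\Omega|\,\Cf(\kappa(|u|+|u_\Omega|))$ and exploiting $H_\sigma^{-1}(r)\ge r$ for large $r$, a consequence of \eqref{incr}; the non-homogeneity of $A$ and $B$ is absorbed by convexity and \eqref{Alambda}, which is precisely what forces a \emph{fixed} dilation constant $C$.

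Continuity is then derived as in Theorem~\ref{theo:conB1}. Let $u_k\to u$ modularly in $W^{1,A}(\Omega)$ with constant $\lambda$. Since $A$ is non-degenerate and $|\Omega|<\infty$, $u_k\to u$ and $\nabla u_k\to\nabla u$ in measure, and hence $f(u_k)\to f(u)$ and $f'(u_k)\nabla u_k\to f'(u)\nabla u$ in measure; the only delicate point is that on $\{\nabla u\ne0\}$ the function $u$ avoids a.e.\ the null set where $f'$ is discontinuous, because Sobolev functions have vanishing gradient a.e.\ on preimages of null sets, so the subsequential a.e.\ limits correctly identify $f'(u)\nabla u$. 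Applying the modular boundedness estimate to the rescaled differences $(u_k-u)/\Lambda$, with $\Lambda=c\kappa\max\{\lambda,\|u\|_{W^{1,A}(\Omega)}\}$ chosen so that the hypotheses $\int A(\cdot)\le1$ hold for $k$ large, yields uniform integrability of $\{B(|f(u_k)-f(u)|/\Lambda)\}$ and $\{B(|f'(u_k)\nabla u_k-f'(u)\nabla u|/\Lambda)\}$; the Vitali convergence theorem then gives that these integrals tend to $0$, i.e.\ $f(u_k)\to f(u)$ modularly in $W^{1,B}(\Omega)$ with constant $\Lambda$, which proves \eqref{jan51}.

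The step I expect to be the main obstacle is the modular boundedness estimate: one must carry a \emph{single} dilation constant $C$ simultaneously through the two‑region splitting and through the Poincar\'e--Sobolev inequality \eqref{eq:sobemb} in the absence of homogeneity and of the $\Delta_2$‑condition for $A$ and $B$, and one must confirm that in the proof of Theorem~\ref{theo:conB1} the exponent $n$ intervened nowhere except through $\mathcal G_{1/n'}$‑embeddings, so that every occurrence may indeed be replaced by $\sigma$ and $\mathcal G_{1/\sigma'}$. The measure‑theoretic handling of $f'(u_k)\nabla u_k$ for a merely locally Lipschitz $f$ is a secondary technical point, but it is routine.
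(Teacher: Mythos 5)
The paper itself gives no proof for this theorem: it states only that the result \lq\lq parallels that of Theorem~\ref{theo:conB1}" via the embedding $W^{1,A}(\Omega)\to L^{A_\sigma}(\Omega)$, with the proof omitted. Your Part~(i) faithfully reproduces that parallel and is fine. Your Part~(ii), however, deviates from the intended argument at the crucial step and introduces a genuine gap.

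The problem is the claim that $f'(u_k)\nabla u_k\to f'(u)\nabla u$ in measure because \lq\lq on $\{\nabla u\ne 0\}$ the function $u$ avoids a.e.\ the null set where $f'$ is discontinuous." For a locally Lipschitz $f$, the Borel representative $f'$ need not have a null discontinuity set: take $f$ to be the primitive of the indicator of a fat Cantor set, so $f'\in L^\infty$ is discontinuous on a set of positive measure. The fact you invoke (vanishing of $\nabla u$ a.e.\ on preimages of \emph{null} sets, as in \cite{MarcusMizel72}) therefore does not apply, and one cannot conclude $f'(u_k(x))\to f'(u(x))$ a.e.\ on $\{\nabla u\ne 0\}$. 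This is precisely the obstruction that the proof of Theorem~\ref{theo:conB1} circumvents: there, Lusin's theorem produces a continuous $g_\delta$ agreeing with $f'$ off a small set $S_\delta$, one splits $f'(u_k)\nabla u_k-f'(u)\nabla u$ into the three pieces of \eqref{eq:apr5-2}, applies Vitali to the middle piece (which \emph{does} converge in measure because $g_\delta$ is continuous), and controls the two error pieces through the coarea formula \eqref{eq:may10}, the level-set estimate \eqref{eq:may9-2}, and the continuity property \eqref{continuity} of the functional $v\mapsto\int_\Omega\chi_Z(u)|\nabla v|\,\d x$ on $W^{1,1}(\Omega)$. None of this machinery appears in your argument, and without it the Vitali step cannot be carried out for $f'(u_k)\nabla u_k - f'(u)\nabla u$ directly. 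To repair the proof you should reinstate the Lusin/coarea decomposition exactly as in the proof of Theorem~\ref{theo:conB1}, with $A_n$, $H_n$, and the $\mathcal G_{1/n'}$-embeddings replaced by $A_\sigma$, $H_\sigma$, and the corresponding $\mathcal G_{1/\sigma'}$-embeddings \eqref{embW1A}, \eqref{eq:sobemb}, \eqref{w1Ainfinity}, \eqref{embEsigma}; Lemma~\ref{lem:lem1} and Lemma~\ref{lem:convEAn} carry over verbatim with $n$ replaced by $\sigma$.

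A secondary, less serious imprecision: the uniform integrability of $\{B(|f(u_k)-f(u)|/\Lambda)\}$ does not follow from \lq\lq applying the modular boundedness estimate to $(u_k-u)/\Lambda$," since $f$ is nonlinear and the estimate concerns $f(v)-f(v_\Omega)$ for a single $v$, not differences $f(v)-f(w)$. The correct route, as in \eqref{eq:apr8}--\eqref{eq:apr9} of the paper, is to dominate $B(|f(u_k)-f(u)|/\lambda)$ by terms involving $B(|f(u)|\cdot)$, $A(|u_k|\cdot)$, and $A_\sigma(\kappa|u_k|)$, and to invoke Lemmas~\ref{lem:convEAn} (with $\sigma$) and~\ref{int-conv} for equiintegrability.
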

 The results of this section are now illustrated with a few examples. We begin with composition operators acting on classical Sobolev spaces.
\begin{ex}
    \label{ex:classical} Let $n\geq 2$ and assume that $\Omega \in \mathcal G_{1/n'}$. Let $f$
 be a locally Lipschitz function 
 such that 
\begin{align}\label{ex:zyg1}
|f'(t)| \leq \vk \Cf(\vk|t|) \quad \text{near infinity,}
\end{align}
 where $\vk > 0$ and $\Cf: [0, \infty) \to [0, \infty)$ is a continuous non-decreasing function.
\\
A classical result from~\cite[Theorem 2]{MarMiz} deals with the continuity of the operator
\begin{align}
    \label{classical1}
   T_f: W^{1,p}(\Omega) \to  W^{1,q}(\Omega).
\end{align}
 It asserts that \eqref{classical1} holds under one of the following circumstances:
\begin{enumerate}[(i)]
    \item  $1\leq p <n$ and
$$\text{$\Cf(t) =t^{r}$ for some $r>0$   and $1\leq q \leq \frac{np}{n + r(n - p)}$;}$$
\item $p=n$ and  $$ \Cf(t)= \begin{cases} 1 & \quad \text{for $q=n$;}
\\ t^{r} & \quad \text{for any $r>0$ and $1\le q<n$;}
\end{cases}$$
\item $p>n$ and $q=p$.
\end{enumerate}
These conclusions are recovered via Theorem \ref{theo:conB1} and sharpened in the borderline case when $p=n$. For instance, one has that
\begin{align}
    \label{classical2}
   T_f: W^{1,n}(\Omega) \to  \begin{cases}
       W^{1,q}(\Omega)& \quad  \text{for  $q<n$\quad if $\Cf(t) = \exp({t^{n'}})$;}
       \\ W^{1}L^n(\log L)^{-\alpha}(\Omega) &\quad \text{for  $\alpha \geq r(n-1)$ \quad if $\Cf(t)=  t^r$ for some $r>0$;}
       \\ W^{1}L^n(\log \log L)^{-\alpha}(\Omega)  & \quad
\text{for  $\alpha \geq rn$ \quad if $\Cf(t)=  \log^{r}(1+t)$ for some $r>0$.}       
   \end{cases}
\end{align}
{The result of~\cite[Theorem 2]{MarMiz} is also reproduced for extension domains $\Omega$, with possibly $|\Omega|=\infty$, via Corollary \ref{coro:DoubleA}.}
\end{ex}
The following example offers an analysis of composition operators between Zygmund-Sobolev spaces.
\begin{ex}\label{ex:Zygmund}
Let $n\geq 2$ and assume that $\Omega \in \mathcal G_{1/n'}$.  Let $f$ and $\Cf$ be as in Example \ref{ex:classical}.
 Assume that either 
 $p>1$ and $\alpha \in \R$, or  $p = 1$  and $\alpha \geq 0$, and that $q$ and $\beta$ satisfy analogous conditions. {Moreover, let $r>0$ and $\gamma \geq 0$.} Then, Theorem~\ref{theo:conB1} yields 
    \begin{equation}\label{eq:apr15-1}
        T_f: W^1L^p(\log L)^{\alpha}(\Omega) \to W^1L^q(\log L)^{\beta}(\Omega)
    \end{equation}
    in the following cases:
 {\begin{equation}\label{eq:exZygmund}
        \begin{cases}
        p < n, q < \frac{np}{n + r(n-p)} & \quad \Cf(t) = t^{r}\log^{\gamma}(1 + t);\\
        p < n, q = \frac{np}{n + r(n-p)},
           {\beta \leq n \frac{\alpha (1+r)-\gamma p}{n+r(n-p)}}
        &\quad  \Cf(t) = t^{r}\log^{\gamma}(1 + t);\\
        p = n, \alpha < n - 1, q < n & \quad \Cf(t) = \exp(t^{\frac{n}{n-1-\alpha}});\\
        p = q = n, \alpha < n - 1, \beta \leq \alpha(1 + r) - r(n-1) & \quad \Cf(t) = t^{r};
        \\
 p = n, \alpha = n - 1, q < n & \quad \Cf(t) = \exp(\exp(t^{n'})); \\
     p = q = n, \alpha = n - 1, \beta < n - 1 & \quad \Cf(t) = \exp(t^{n'});\\
       p = q, \alpha = \beta & \quad \text{$\Cf(t)=1$.}
        \end{cases}
    \end{equation}
        On the other hand, in the following cases:
        \begin{align}\label{apr25}
            \begin{cases}
            p=q=n, \alpha=\beta> n-1;
            \\ p=q>n, \alpha=\beta,
        \end{cases}
        \end{align}
        the continuity of the mapping ~\eqref{eq:apr15-1} holds upon the sole assumption that $f$ is locally Lipschitz continuous.}
\end{ex}
A variant of Example \ref{ex:Zygmund} for double-logarithmic Zygmund-Sobolev spaces is discussed in the following one.

\begin{ex}\label{ex:Zygmund2}
Let $n\geq 2$ and assume that $\Omega \in \mathcal G_{1/n'}$.  Let $f$ and $\Cf$ be as in Example \ref{ex:classical}.
Assume that $p,q,r, \alpha, \beta, \gamma$ are as in Example \eqref{ex:Zygmund}.
 Then, Theorem~\ref{theo:conB1} yields
    \begin{equation}\label{eq:apr15-2}
        T_f: W^1L^p(\log \log L)^{\alpha}(\Omega) \to W^1L^q(\log \log L)^{\beta}(\Omega)
    \end{equation}
    in the following cases:
    \begin{equation*}
    \begin{cases}
        p < n, q < \frac{np}{n + r(n-p)}  & \quad \Cf(t) = t^{r}(\log \log (e + t))^{\gamma};\\
        p < n, q = \frac{np}{n + r(n-p)}, 
        {\beta \leq n \frac{\alpha (1+r)-\gamma p}{n+r(n-p)}}
        & \quad \Cf(t) = t^{r}(\log \log (e + t))^{\gamma};\\
        p = n, q < n & \quad \Cf(t) = \exp(t^{n'}\log^{\frac{\alpha}{n-1}} (1 + t));\\
        p = q = n, \gamma  > 0, \beta \leq \alpha - n\gamma & \quad \Cf(t) = \log^{\gamma} (1 + t);\\
        p = q, \alpha = \beta & \quad \Cf(t) = 1.
    \end{cases}
    \end{equation*}
    {Moreover, if
    \begin{align}
        \label{apr26}
        p=q>n,\, \alpha =\beta,
    \end{align}
    then~\eqref{eq:apr15-2} holds upon the sole assumption that $f$ is locally Lipschitz continuous.}
\end{ex}

Exponential Sobolev spaces are considered as a further example.
\begin{ex}
    \label{ex-exp} Let $n\geq 2$ and 
     assume that $\Omega \in \mathcal{G}_{1/n'}$  and let $f$ be a locally Lipschitz function. Then, 
     \begin{equation}
         \label{ex-exp1}
         T_f : W^1\exp L^\alpha (\Omega)\to W^1\exp L^\alpha (\Omega)
     \end{equation}
     for every $\alpha >0$. This is a consequence of  Theorem~\ref{theo:conB1}, Part (i), as any Young function $A$ such that $A(t)\simeq e^{t^\alpha}$ near infinity satisfies the condition \eqref{A-convinf}.
\end{ex}

Our last example deals with 
 Sobolev spaces built upon  Young functions which do not satisfy the $\Delta_2$-condition near 0. This requires using the full strength of Theorem~\ref{theo:conB2} instead of its Corollary~\ref{coro:DoubleA} when dealing with arbitrary extension domains.
\begin{ex}
Let $\Omega$ be an extension domain in $\rn$, with $n\geq 2$. Let $f$ be locally Lipschitz continuous function such that  $f(0) = 0$ and
    $$|f'(t)| \leq \kappa |t|^r \quad \text{for $t \in \mathbb R$,}$$
    for some $r, \vk >0$. Assume  that $A$ and $B$ are Young functions such that $$A(t) \simeq  \exp(-t^{-\alpha}) \quad \text{and} \quad B(t) \simeq \exp(-t^{-\beta}) \quad \text{near zero,}$$ for  some $\alpha, \beta > 0$. Moreover, assume that $A$ and $B$ behave near infinity in such a way that the condition~\eqref{eq:inq-ass2} is fulfilled. Then, Theorem~\ref{theo:conB2} Part (ii) tells us that
    \begin{equation*}
        T_f : W^{1, A}(\Omega) \to W^{1, B}(\Omega)\,,
    \end{equation*}
   provided that $$\beta > \frac{\alpha}{r + 1}.$$
   This follows from an application of Theorem~\ref{theo:conB2} above with the choice
   $F(t) = \exp(-t^{-\gamma})$, with $\gamma > \alpha$ and sufficiently close to $\alpha$.
\end{ex}

{
We conclude this section with an example showing that the continuity of \eqref{jan23bis}  in the norm topology may fail
if $A\notin\Delta_2$.
\begin{ex}\label{ex:counter}
Assume that $\Omega = (0, 1)^n$, with $n\geq 1$,  $A(t) = t \exp t$, and $f(t) = \max(0, |t| - 1)$. Consider the function
$$u(x) = 1 +x_1(\log(x_1) - 1),\quad \text{where $x=(x_1, \dots , x_n)$}$$
and the sequence $\{u_k\}$ defined as 
\begin{equation*}
    u_k(x) = u(x) + \frac{1}{k}(\log(k) + 1)\,.
\end{equation*}
We claim that $u\in W^{1, A}(\Omega)$, $u_k \in W^{1, A}(\Omega)$ for $k \in \mathbb N$ and $\|u_k - u\|_{W^{1, A}(\Omega)} \xrightarrow{k \to \infty} 0$, but the sequence $\{f(u_k)\}$ does not converge to $f(u)$ in the norm topology of $W^{1, A}(\Omega)$.
\\
To verify these claims, observe that
 $\frac{\partial u}{\partial x_1}(x) = \log(x_1) < 0$, $\frac{\partial u}{\partial x_i} = 0$ for $i= 2, \dots n$, and $u(x) \in (0, 1)$. Clearly, $u \in L^A(\Omega)$, as $u$ is bounded. Also, $A(|\nabla u(x)|/2) = -\tfrac{1}{2}|x_1|^{-1/2}\log(|x_1|)$, whence  $\nabla u \in L^A(\Omega)$. Next, the function $u_k$ is bounded for $k \in \mathbb N$, and therefore $u_k \in L^A(\Omega)$. Moreover,  $\nabla u_k = \nabla u$,
and hence $u_k \in W^{1, A}(\Omega)$ as well. 
Since $u_k \to u$ uniformly in $\Omega$ and 
 $\nabla u_k - \nabla u = 0$, one has that $\|u_k - u\|_{W^{1, A}(\Omega)} \xrightarrow{k \to \infty} 0$.
\\
On the other hand, observe that for $x$ such that $x_1 < 1/k$, we have $u_k(x) > 1$ and $u(x) < 1$. Furthermore, $f'(t) = 1$ for $t > 1$ and $f'(t) = 0$ for $t \in (0, 1)$. Therefore, if $x_1 < 1/k$, then  
\begin{equation*}
    (f(u_k))'(x) - (f(u))'(x) = f'(u_k(x))\nabla u_k(x) - f'(u(x))\nabla u(x) = \nabla u_k(x) = \nabla u(x)\,.
\end{equation*}
Consequently,
\begin{equation*}
    \int_{\Omega} A(|f'(u_k)\nabla u_k - f'(u)\nabla u|)\d x \geq \int_{(0, 1/k) \times (0, 1)^{n-1}} A(|\nabla u(x)|) \d x = \int_{0}^{1/k} t^{-1}|\log(t)| = \infty\,.
\end{equation*}
This shows that the sequence $\{f(u_k)\}$ does not converge to $f(u)$ in norm topology of $W^{1, A}(\Omega)$.

\end{ex}}

\section{Composition operators in anisotropic Orlicz-Sobolev spaces}\label{sec:aniso}
In this section, we assume that $n \geq 2$ and provide anisotropic versions of the results stated in Section~\ref{sec:iso}. {Let us emphasize that whenever the domain $\Omega$ is not assumed to have a finite measure, the results of this section hold, in particular, for $\Omega = \rn$.}

We begin with the continuity of
\begin{align}
    \label{jan23_Phi}
    T_f : V^{1,\Phi}_0(\Omega) \to V^{1,\Phi}_0(\Omega)
\end{align}
in the modular topology, for an $n$-dimensional Young function $\Phi$. 
\begin{theo}\label{theo-conPhi}
Let $\Omega$ be an open set in $\rn$, with $n\geq 2$, and let  $\Phi$ be a  non-degenerate $n$-dimensional Young function. Assume that  $f : \R \to \R$ is such that $f(0) = 0$.
\begin{enumerate}[(i)]
    \item  If $f$ is Lipschitz continuous, then \eqref{jan23_Phi} holds.
    \item 
    Assume, in addition, that the condition~\eqref{Phi-convinf} is in force,  and either $|\Omega|<\infty$, or $|\Omega| = \infty$ and the condition~\eqref{A-0-aniso} is also in force. If $f$ is locally Lipschitz continuous, then \eqref{jan23_Phi} holds.
\end{enumerate}
\end{theo}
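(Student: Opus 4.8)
The plan is to establish continuity of $T_f$ in the modular topology by reducing to a sequential statement: given $u_k\to u$ modularly in $V^{1,\Phi}_0(\Omega)$, show that $f(u_k)\to f(u)$ modularly in the same space. First I would verify that $T_f$ is well defined on $V^{1,\Phi}_0(\Omega)$. Since $f(0)=0$ and $f$ is (at least locally) Lipschitz, the chain rule for Sobolev functions gives $\nabla(f\circ u) = f'(u)\nabla u$ a.e., with $|\nabla(f\circ u)|\le L|\nabla u|$ (or $|\nabla(f\circ u)|\le \kappa\Cf(\kappa|u|)|\nabla u|$ in case (ii)); hence $\widehat{f\circ u}=f\circ\widehat u\in W^{1,1}_{loc}(\rn)$. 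To see $\nabla(f\circ u)\in L^\Phi$ in case (ii), one uses that $\Phi(\lambda t)\le\lambda\Phi(t)$ for $\lambda\ge 1$ together with the embedding $V^{1,\Phi}_0(\Omega)\to L^\infty(\Omega)$ supplied by \eqref{apr60} under assumption \eqref{Phi-convinf}, which bounds $\|u\|_{L^\infty}$ in terms of $\int_\Omega\Phi(\nabla u)\d x$ (via \eqref{eq:sobemb-2} with $\Phi_n\equiv\infty$ past a threshold); this controls $\Cf(\kappa|u|)$ by a constant, so $|\nabla(f\circ u)|$ is dominated by a multiple of $|\nabla u|$. The level-set finiteness condition in \eqref{V1Phi0} follows because $\{|f\circ\widehat u|>t\}\subseteq\{|\widehat u|>s\}$ for a suitable $s>0$ when $f(0)=0$ and $f$ is continuous, in case (i); in case (ii) the same reasoning applies using that $f$ restricted to the (bounded) range of $\widehat u$ is Lipschitz with $f(0)=0$.

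\textbf{Main estimate.} For the continuity itself, I would split $\nabla(f(u_k))-\nabla(f(u)) = f'(u_k)(\nabla u_k-\nabla u) + (f'(u_k)-f'(u))\nabla u$. The first term is handled by the Lipschitz/growth bound: $|f'(u_k)|\le L$ (case (i)) or $|f'(u_k)|\le\kappa\Cf(\kappa|u_k|)\le C$ (case (ii), using that modular convergence in $V^{1,\Phi}_0$ plus the embedding into $L^\infty$ gives a uniform $L^\infty$ bound on the $u_k$, again via \eqref{eq:sobemb-2}--\eqref{apr60}; here one uses that $\int_\Omega\Phi(\nabla u_k/\lambda)\d x$ stays bounded). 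Then $\int_\Omega\Phi\big(f'(u_k)(\nabla u_k-\nabla u)/(\mu\lambda)\big)\d x\le\int_\Omega\Phi\big((\nabla u_k-\nabla u)/\lambda\big)\d x\to 0$ by choosing $\mu=\max\{L,C\}$ (using \eqref{Alambda} to absorb the constant into the argument). For the second term, modular convergence implies convergence in measure (since $\Phi$ is non-degenerate), so along a subsequence $u_k\to u$ a.e.; since $f'$ exists a.e. and is continuous at a.e. point in the relevant bounded range (or one uses a density/approximation argument for $f'$), $f'(u_k)-f'(u)\to 0$ a.e., and $|(f'(u_k)-f'(u))\nabla u|\le 2\mu|\nabla u|$ is dominated; a dominated-convergence argument for the integral $\int_\Omega\Phi\big((f'(u_k)-f'(u))\nabla u/(2\mu\lambda')\big)\d x$ — with an appropriate constant $\lambda'$ chosen so that $\Phi(\nabla u/\lambda')\in L^1$ — then gives convergence to $0$. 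A standard subsequence-of-subsequence argument upgrades this to convergence of the full sequence.

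\textbf{The main obstacle.} The delicate point is the a.e. continuity of $f'$ along the sequence: $f'$ is only a Borel representative defined a.e., so "$u_k\to u$ a.e." does not automatically give "$f'(u_k)\to f'(u)$ a.e." without knowing that $u$ avoids the (Lebesgue-null) bad set of $f'$ on a set of full measure. The way around this is the classical observation (as in \cite{MarMiz,BL}) that the composition $f'(u)$ is independent, up to a.e. equality, of the Borel representative chosen, precisely because $u$ is a Sobolev function and hence $\{x:u(x)\in N\}$ is null whenever $\nabla u=0$ a.e. on it, for any null set $N$ — combined with the fact that $\nabla u=0$ a.e. on $\{u=c\}$ for each $c$. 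More robustly, one approximates $f'$ by continuous functions $g_j$ with $g_j\to f'$ in $L^1_{loc}$ and pointwise a.e. off a null set, writes the chain rule with $g_j$, and passes to the limit; this is where the non-homogeneity of $\Phi$ and the absence of $\Delta_2$ force one to work modularly rather than in norm, keeping all constants (like the $\mu$, $\lambda'$ above) explicit. The remaining subtlety in case (ii) is to make sure the uniform $L^\infty$ bound on $\{u_k\}$ is genuinely uniform: this rests on the fact that modular convergence of $\nabla u_k$ forces $\int_\Omega\Phi(\nabla u_k)\d x$ (or at least $\int_\Omega\Phi(\nabla u_k/\lambda)\d x$ for the fixed convergence constant $\lambda$) to be bounded, which plugs into the Poincaré–Sobolev inequality \eqref{eq:sobemb-2} to bound $\|u_k\|_{L^\infty}$; once that is in place the argument for (ii) runs exactly as for (i) with $L$ replaced by the resulting constant $C$.
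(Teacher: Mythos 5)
Your overall architecture is sound and matches the paper's: Part (ii) is reduced to Part (i) via the $L^\infty$ embedding~\eqref{apr60} and a truncation of $f$, the first term $f'(u_k)(\nabla u_k-\nabla u)$ is disposed of using $|f'(u_k)|\le C$ together with convexity and evenness of $\Phi$, and the crucial obstruction is correctly located in the term $(f'(u_k)-f'(u))\nabla u$. What is missing, and what makes the actual proof non-trivial, is a genuine argument for that second term. Your dominated-convergence claim rests on ``$f'(u_k)\to f'(u)$ a.e.\ on the relevant bounded range,'' but a locally Lipschitz $f$ can have $f'$ defined a.e.\ and yet continuous \emph{nowhere}, so a.e.\ convergence of $u_k$ yields nothing about $f'(u_k)$. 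Your fallback --- approximate $f'$ by continuous $g_j$ with $g_j=f'$ off a Lebesgue-null set $N_j$ --- is exactly the Lusin step the paper takes, but you stop at the point where the work begins: although $N_j$ is null in $\R$, the preimages $u_k^{-1}(N_j)$ may have positive measure in $\Omega$, and indeed a priori their measure need not go to zero as $j\to\infty$ uniformly in $k$. Nothing in a bare dominated-convergence argument controls $\int_{u_k^{-1}(N_j)}\Phi(\cdot)\,\d x$.

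The paper's resolution has two ingredients you would need to supply. First, after replacing $f'$ by a continuous $g_\delta$ with $g_\delta=f'$ outside an \emph{open} set $\Udel$ of measure $<\delta$ and restricting to the set $\Omega_\varepsilon=\{|\nabla u|\ge\varepsilon\}$, the coarea formula gives $|\Omega_\varepsilon\cap u^{-1}(\Udel)|\le\varepsilon^{-1}\int_{\Udel}\mathcal H^{n-1}(\{u=t\})\,\d t\to0$ as $\delta\to0$; this handles the term involving $u$ but not yet $u_k$. Second, to control the measure of $\{|\nabla u_k|\ge\varepsilon/2\}\cap u_k^{-1}(\Udel)$ uniformly in large $k$, the paper invokes the continuity of the functional $v\mapsto\int_\Omega\chi_{Z}(u)|\nabla v|\,\d x$ on $W^{1,1}(\Omega)$ (the consequence of \cite{GBM} and \cite{BL} recorded in~\eqref{continuity}), so that $\int_\Omega\chi_{\Udel}(u_k)|\nabla u_k|\,\d x\to\int_\Omega\chi_{\Udel}(u)|\nabla u|\,\d x$, after which the coarea bound again drives this to zero as $\delta\to0$. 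The residual regions $\{\nabla u=0\}$ and $\{0<|\nabla u|<\varepsilon\}$ are handled separately by equiintegrability. Without these steps --- in particular without the continuity lemma~\eqref{continuity} --- the passage $g_j\to f'$ cannot be pushed through the composition, and the proof does not close.
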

A counterpart of Theorem~\ref{theo:conB0} for $n$-dimensional Young functions is stated in the next theorem, 
dealing with the continuity of 
\begin{align}
    \label{jan72}
    T_f : V^{1,\Phi}_0(\Omega) \to V^{1,\Psi}_0(\Omega)
\end{align}
in the modular topology,  where $\Phi$ and $\Psi$ are possibly different $n$-dimensional Young functions and $f'$ is subject to a growth condition of the form \eqref{jan50}.
 Analogously to its isotropic version, Theorem~\ref{theo-conPhi} only deals with the case when \eqref{Phi-inf} holds, the complementary situation corresponding to the assumption \eqref{Phi-convinf} being considered in Theorem \ref{theo-conPhi}. 
 \\ 
As agreed in Subsection \ref{sub-aniso}, whenever the condition ~\eqref{A-0-aniso} is not explicitly assumed, the function  $\Phi_n$ is defined with $\Phi$ possibly modified near zero in such a way that ~\eqref{A-0-aniso} is fulfilled. The conditions to be imposed on the involved functions are 
independent of the choice of this replacement.
\\ Given an $n$-dimensional Young function $\Phi$ satisfying \eqref{A-0-aniso} and \eqref{Phi-inf}, let  $\vartheta : \rn \to [0, \infty)$ be the function implicitly defined 
 via the equation
    \begin{equation}\label{theta}
        \Phi_n(\vartheta(\xi)) = \Phi\left(\frac{\xi}{\Cf(\vartheta(\xi))}\right) \quad \text{ for $\xi \in \Rn$.}
    \end{equation}
 Note that the function $\vartheta$ is well defined, inasmuch as, under the current assumptions on $\Phi$, the function $\Phi_n$ is continuous and strictly increasing from $0$ to $\infty$, whereas the function $t \mapsto \Phi\big( \frac{\xi}{\Cf(t)} \big)$ is continuous and non-increasing for each fixed $\xi \in \rn$. Therefore, for every $\xi$, there exists a unique $t\geq 0$ such that $\Phi_n(t)-\Phi\big( \frac{\xi}{\Cf(t)} \big)=0$.
    
\begin{theo}\label{theo:conPhi0}
 Assume that $\Omega$ is an open set in $\rn$, with $n\geq 2$. Let $\Phi$ and $\Psi$ be $n$-dimensional Young functions.
Assume that $\Phi$ is non-degenerate and satisfies the condition \eqref{Phi-inf}.
 Let $\vartheta$ be the function defined by \eqref{theta}.
 Let $f : \R \to \R$ be a locally Lipschitz continuous function fulfilling the condition \eqref{jan50}  and such that $f(0) = 0$.
 \begin{enumerate}[(i)]
\item Assume that   $|\Omega| < \infty$. 
If there exists a constant $c > 0$ such that
\begin{equation}\label{eq:assumpt1}
        \Psi(\xi) \leq c + \Phi\left(\frac{\xi}{\Cf(\vartheta(\xi))}\right)
        \quad \text{ for $\xi \in \rn$,}
    \end{equation}
    then \eqref{jan72} holds.
\item Assume that $\Phi$ satisfies the condition \eqref{A-0-aniso}.  If \begin{equation}\label{eq:assumpt2}
        \Psi(\xi) \leq \Phi\left(\frac{\xi}{\Cf(\vartheta(\xi))}\right)
        \quad \text{ for $\xi \in \rn$,}
    \end{equation}
        then \eqref{jan72} holds.
\end{enumerate}
\end{theo}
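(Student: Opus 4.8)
The plan is to reduce the statement to an application of the Poincaré-Sobolev inequality \eqref{eq:sobemb-2} together with a careful bookkeeping of modular convergence through the chain rule for Sobolev functions. First, one checks that $T_f$ is well defined on $V^{1,\Phi}_0(\Omega)$: if $u \in V^{1,\Phi}_0(\Omega)$, then $f \circ u$ is weakly differentiable with $\nabla (f\circ u) = f'(u)\nabla u$ a.e.\ (using that $f$ is locally Lipschitz and $u$ is locally Sobolev, plus the convention on $f'$), the condition $f(0)=0$ together with the Lipschitz-on-compacts bound guarantees that $|f(u)| \leq \kappa' |u|$ for $|u|$ bounded and more generally that the superlevel sets of $f\circ u$ have finite measure, and the growth condition \eqref{jan50} controls $|\nabla(f\circ u)|$ in terms of $\Cf(\kappa|u|)|\nabla u|$. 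The heart of the matter is to show that $|\nabla(f\circ u)| = |f'(u)||\nabla u| \in L^\Psi(\Omega)$. Here I would use \eqref{jan50} to bound $\Psi(\nabla(f\circ u)/\lambda)$ pointwise by $\Psi\big(\kappa\Cf(\kappa|u|)\nabla u/\lambda\big)$ and then invoke the definition of $\vartheta$: writing $\xi = \kappa \nabla u(x)/\lambda$ suitably rescaled, the hypothesis \eqref{eq:assumpt2} (or \eqref{eq:assumpt1}) is exactly what lets one dominate $\Psi\big(\kappa\Cf(\kappa|u|)\nabla u/\lambda\big)$ by $c + \Phi_n(\text{something involving } u) + \Phi(\nabla u/\text{something})$, after which the embedding \eqref{eq:sobemb-2} converts the $\Phi_n$-term on $u$ into a $\Phi$-term on $\nabla u$, closing the estimate.

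The key algebraic identity to exploit is the following: by the very definition \eqref{theta} of $\vartheta$, one has, for each $\xi$, both $\Phi_n(\vartheta(\xi)) = \Phi(\xi/\Cf(\vartheta(\xi)))$; moreover $\vartheta$ is monotone in $|\xi|$ in an appropriate sense. The plan is to split the domain of integration according to whether $\Cf(\kappa|u(x)|) \le \Cf(\vartheta(\text{rescaled }\nabla u))$ or not — i.e., whether the "$u$ size" or the "$\nabla u$ size" dominates — and in the first regime use monotonicity of $\Cf$ to replace $\Cf(\kappa|u(x)|)$ by the larger $\Cf(\vartheta(\cdot))$ so that \eqref{eq:assumpt2} applies and produces a pure $\Phi(\nabla u/\Cf(\vartheta))$ term; in the second regime, one uses $\Cf(\kappa|u|) \le$ the $\nabla u$-governed quantity to bound $\Psi$ by (a constant plus) $\Phi_n(\vartheta)$ evaluated at a point where $\Phi_n(\vartheta)$ is comparable to $\Phi_n(c\,|u|/(\int\Phi(\nabla u))^{1/n})$, and then the Poincaré-Sobolev inequality \eqref{eq:sobemb-2} bounds $\int\Phi_n(c|u|/(\int\Phi(\nabla u))^{1/n})$ by $\int\Phi(\nabla u)$. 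Summing the two regimes yields a finite modular with an explicit constant. In Part~(i), the additive constant $c$ in \eqref{eq:assumpt1} is harmless precisely because $|\Omega|<\infty$, so $\int_\Omega c\,\d x < \infty$; in Part~(ii) the assumption \eqref{A-0-aniso} guarantees $\Phi_n$ is finite-valued and the embedding \eqref{emb-aniso} holds globally, so no additive constant is needed.

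For the continuity statement (not just well-definedness), I would follow the scheme of \cite{BL} and \cite{MarMiz}: given $u_k \to u$ modularly in $V^{1,\Phi}_0(\Omega)$ with constant $\lambda$, one has $\nabla u_k \to \nabla u$ in measure and, after passing to a subsequence, a.e., whence $f'(u_k)\nabla u_k \to f'(u)\nabla u$ a.e.\ at points where $f'$ is continuous at $u(x)$ and $u_k(x)\to u(x)$; the exceptional set where this fails (the "bad set" where $u(x)$ lands in the Lebesgue-null set of discontinuities of $f'$) is handled by the observation that $\nabla u = 0$ a.e.\ on $\{u = \text{const}\}$, a standard fact, so $f'(u_k)\nabla u_k - f'(u)\nabla u \to 0$ a.e.\ unconditionally. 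Then a Vitali-type argument upgrades a.e.\ convergence to modular convergence: the dominating modular estimate from the previous paragraph, applied to $u_k$ uniformly (using the modular bound on $u_k - u$ to get a uniform modular bound on $u_k$ via convexity and \eqref{Alambda}), provides the equi-integrability needed, and a subsequence-of-every-subsequence argument removes the passage to subsequences. The main obstacle I anticipate is the uniform equi-integrability of $\{\Psi(\nabla(f\circ u_k))\}$: because $\Psi$ and $\Phi$ need not satisfy $\Delta_2$, one cannot simply absorb constants, and one must track the modular constant carefully — picking it as a fixed multiple of $\max\{\lambda, \|u\|\}$ so that the rescaled gradients all lie in a region where the pointwise domination by $c + \Phi_n + \Phi$ is valid, and then invoking \eqref{eq:sobemb-2} with that same rescaling. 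This is exactly the delicate point that distinguishes the Orlicz setting from the classical one and where the explicit-constant bookkeeping of the isotropic Theorem~\ref{theo:conB1} must be reproduced in the anisotropic language.
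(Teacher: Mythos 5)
Your outline of the algebraic and embedding part of the argument is in the spirit of the paper's proof. The split of $\Psi(\Cf(s)\xi)$ into a $\Phi_n(s)$-term plus a $\Phi(\xi)$-term plus a constant, using the monotonicity of $\Phi_n$ and of $t\mapsto\Phi(\xi/\Cf(t))$ around the fixed point $\vartheta$, is exactly Lemma~\ref{lem:Phi-Psi}, and converting the $\Phi_n$-modular of $u$ into a $\Phi$-modular of $\nabla u$ via~\eqref{eq:sobemb-2} is exactly Lemma~\ref{lem:Phin-conv}. Well-definedness of $T_f$ on $V^{1,\Phi}_0(\Omega)$ and the reduction of Part~(ii) to Part~(i) on exhausting subsets then follow as you sketch.

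However, your continuity argument contains a genuine gap. You claim that, after passing to subsequences, $f'(u_k)\nabla u_k\to f'(u)\nabla u$ a.e.\ ``unconditionally,'' by treating separately the points $x$ where $f'$ is continuous at $u(x)$ and those where it is not, and asserting that the second class lives on $u^{-1}(N)$ for a Lebesgue-null set $N$ (``the Lebesgue-null set of discontinuities of $f'$''), where moreover $\nabla u=0$ a.e.\ by a Serrin--Varberg type argument. The flaw is that for a general locally Lipschitz $f$, there is no Lebesgue-null set $N$ outside of which a Borel representative of $f'$ is continuous. One can build locally Lipschitz $f$ whose derivative $f'$ fails to be continuous on a set of positive (even full) measure --- for instance $f(t)=\int_0^t\chi_E$ with $E$ a measurable set whose density is neither $0$ nor $1$ on a large set. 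At a point $t_0$ where $f'$ is merely approximately continuous, the conclusion $f'(u_k(x))\to f'(u(x))$ does \emph{not} follow from $u_k(x)\to u(x)$, and the coarea-type observation that $\nabla u=0$ a.e.\ on $u^{-1}(N)$ is of no help because the relevant $N$ is not null. This is precisely the difficulty that forces the Lusin approximation in the paper's proof: one replaces $f'$ by a continuous $g_\delta$ that agrees with $f'$ off a set $S_\delta$ with $|S_\delta|<\delta$, proves modular convergence for the $g_\delta$-terms by the a.e.\ argument, and then estimates the residual terms supported on $u^{-1}(S_\delta)$ and $u_k^{-1}(S_\delta)$ by combining the coarea formula (Lemma from~\cite{Coarea}) with the continuity of $v\mapsto\int_\Omega\chi_{S_\delta}(v)|\nabla v|\,\d x$ on $W^{1,1}$ from~\cite{GBM,BL} --- see~\eqref{continuity},~\eqref{eq:may9-2}--\eqref{eq:may12} in the proof of Theorem~\ref{theo:conB1}. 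Your proposal needs to incorporate this Lusin/coarea step (or an equivalent device) to be complete; as stated, the a.e.\ convergence claim is false in general.
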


{
\begin{rem}\label{iso-aniso}
   {The condition \eqref{eq:assumpt1}  recovers, {up to equivalence in the sense of \eqref{2024-211}}, the condition \eqref{eq:inq-ass2} in the case when $\Phi(\xi)=A(|\xi|)$
for some Young function $A$ fulfilling \eqref{A-0}    and \eqref{A-inf}, and $\Psi (\xi)= B(|\xi|)$ for some Young function $B$ -- see Lemma \ref{recover} in Section \ref{sec:tech}.}
\end{rem}
\medskip
{The following corollary concerns the continuity of \eqref{jan23_Phi} and \eqref{jan72} for orthotropic $n$-dimensional Young functions of the form \eqref{Phi=Ai} and locally Lipschitz continuous functions $f$. 
If $f$ is globally Lipschitz continuous,
the continuity of \eqref{jan23_Phi} is guaranteed by Theorem \ref{theo-conPhi}.}
{\begin{coro}\label{coro:ortho}
 Let $\Omega$ be an open set in $\rn$, with $n\geq 2$. Let $\Phi$ and $\Psi$ be $n$-dimensional Young functions of the form
    \begin{equation}\label{apr52}
        \Phi(\xi) = \sum_{i=1}^{n} A_i(|\xi_i|) \quad \text{and} \quad \Psi(\xi) = \sum_{i=1}^{n} B_i(|\xi_i|)\,,
    \end{equation}
 where $A_i$ are non-degenerate Young functions  and $B_i$ are Young functions, for  $i=1, \dots , n$.
Assume that $\Phi$ satisfies the condition \eqref{Phi-inf}, {namely
\begin{align}
    \label{apr50}
    \int^\infty\left( \frac{t}{\overline A(t)} \right)^{1/(n-1)} \d t=\infty\,,
\end{align}
where $\overline{A}$ is defined by \eqref{ortho1}.} 
Let $f : \R \to \R$ be a locally Lipschitz continuous function fulfilling the inequality \eqref{jan50}. Then, the following assertions hold.
\begin{enumerate}[(i)]
    \item Assume that   $|\Omega| < \infty$. If there exists $t_0 \geq  0$ such that 
\begin{equation}\label{eq:ass-ortho}
            B_i\big(A_i^{-1}\big(\overline A(t)\big)\Cf\big(\overline H_n(t)\big)\big) \leq \overline A(t)\quad \text{for  $t \geq t_0$,}
        \end{equation}
        for every $i=1, \dots , n$, then \eqref{jan72} holds.
\item Assume that $\Phi$ satisfies, in addition, the condition \eqref{A-0-aniso}. If \eqref{eq:ass-ortho} is fulfilled with $t_0=0$,
         then \eqref{jan72} holds.
\end{enumerate}
{In the case complementary to \eqref{apr50}, namely if 
\begin{align}
    \label{apr51}
    \int^\infty\left( \frac{t}{\overline A(t)} \right)^{1/(n-1)} \d t< \infty\,,
\end{align}
then \eqref{jan23_Phi} holds for every locally Lipschitz continuous function $f$, provided that either $|\Omega|<\infty$, or $|\Omega|=\infty$ and $\int_0\big( \frac{t}{\overline A(t)} \big)^{1/(n-1)} \d t< \infty.$}
\end{coro}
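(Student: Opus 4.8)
The plan is to deduce Corollary~\ref{coro:ortho} from Theorem~\ref{theo:conPhi0} by exploiting the explicit description \eqref{ortho2}--\eqref{ortho3} of the Sobolev conjugate of an orthotropic function. Recall that for $\Phi(\xi)=\sum_i A_i(|\xi_i|)$ one has $\Phi_\circ\simeq\overline A$ and $\Phi_n\simeq\overline A_n$, where $\overline A$ is given by \eqref{ortho1} and $\overline A_n$ by \eqref{Ahatn}--\eqref{Hhat}. Hence the hypothesis \eqref{apr50} is precisely \eqref{Phi-inf} rewritten in terms of $\overline A$, and \eqref{apr51} is \eqref{Phi-convinf}; in the latter case the last assertion of the Corollary follows directly from Theorem~\ref{theo-conPhi}(ii) (with the extra integrability at $0$ of $(t/\overline A(t))^{1/(n-1)}$ playing the role of \eqref{A-0-aniso}), so I would dispose of that case first in one line.

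For the main cases (i) and (ii), the core is to show that the pointwise inequality \eqref{eq:ass-ortho} on the one-dimensional building blocks implies the $n$-dimensional inequality \eqref{eq:assumpt1} (resp. \eqref{eq:assumpt2}) for $\Psi(\xi)=\sum_i B_i(|\xi_i|)$ and $\Phi(\xi)=\sum_i A_i(|\xi_i|)$. First I would analyze the defining equation \eqref{theta} for $\vartheta$. Using $\Phi_n(t)\simeq\overline A_n(t)=\overline A(\overline H_n^{-1}(t))$ and writing $s=\overline H_n^{-1}(\vartheta(\xi))$, the identity \eqref{theta} becomes, up to equivalence, $\overline A(s)=\Phi\big(\xi/\Cf(\overline H_n(s))\big)$, which ties the scalar variable $t=\overline H_n(s)$ appearing in \eqref{eq:ass-ortho} to $\vartheta(\xi)=\overline H_n(s)$. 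The key estimate is then: for each $i$, bound $B_i\big(|\xi_i|/\Cf(\vartheta(\xi))\big)$ from above in terms of $\overline A(s)=\Phi_n(\vartheta(\xi))$. Since $\overline A^{-1}(\overline A(s))\geq s$ and, by \eqref{ortho1}, $A_i^{-1}(\overline A(s))\geq$ (something comparable to) the $i$-th factor, one gets $|\xi_i|\lesssim A_i^{-1}(\Phi(\xi/\Cf(\vartheta)))\,\Cf(\vartheta)$ on the set where $\Phi(\xi/\Cf(\vartheta))=\overline A(s)$; feeding this into $B_i$ and invoking \eqref{eq:ass-ortho} with $t=s$ (using monotonicity of $B_i$ and that $\overline H_n(s)=\vartheta(\xi)$) yields $B_i(|\xi_i|/\Cf(\vartheta(\xi)))\lesssim\overline A(s)\simeq\Phi(\xi/\Cf(\vartheta(\xi)))$. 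Summing over $i=1,\dots,n$ and absorbing the loss from the $t_0$ in \eqref{eq:ass-ortho} into the additive constant $c$ (for part (i)) or using $t_0=0$ to get the clean inequality (for part (ii)) produces exactly \eqref{eq:assumpt1}, resp. \eqref{eq:assumpt2}. One then applies Theorem~\ref{theo:conPhi0}(i), resp. (ii); note \eqref{A-0-aniso} for $\Phi$ is available in part (ii) by hypothesis.

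A technical point I would handle with care is that all of \eqref{ortho2}--\eqref{ortho3} are equivalences (in the sense of \eqref{2024-211}), not equalities, so the chain above produces \eqref{eq:assumpt1} only up to replacing $\Phi$, $\Psi$, $\Phi_n$ by globally equivalent Young functions and adjusting the constant $\kappa$ in \eqref{jan50}; since equivalent Young functions define the same Orlicz(-Sobolev) spaces, this does not affect the conclusion, and I would remark this explicitly (as in the Remark following Theorem~\ref{theo:conB2}). One must also check that $\overline A$ inherits non-degeneracy from the $A_i$ — immediate from \eqref{ortho1} since each $A_i^{-1}$ is finite and positive for $t>0$ — so that Theorem~\ref{theo:conPhi0} applies, and that $\Phi$ is indeed non-degenerate (again clear from $\Phi(\xi)=\sum A_i(|\xi_i|)$ with all $A_i$ non-degenerate).

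The main obstacle I anticipate is the reduction from the coordinatewise inequality \eqref{eq:ass-ortho} to the vectorial inequality \eqref{eq:assumpt1}: one needs a clean way to control each $|\xi_i|$ on the implicit level set $\{\Phi(\xi/\Cf(\vartheta(\xi)))=\overline A(\overline H_n^{-1}(\vartheta(\xi)))\}$ by the corresponding $A_i^{-1}(\overline A(\cdot))$ factor, and the elementary inequality relating $\prod_i A_i^{-1}(s)$ to $\overline A^{-1}(s)^n$ via \eqref{ortho1} together with the super/sub-additivity properties \eqref{incr}--\eqref{Alambda} of Young functions is what makes this work; getting the bookkeeping of constants and the near-zero versus near-infinity regimes (parts (i) vs (ii)) right is where the real work lies, but it is of the same flavor as the passage already carried out in Lemma~\ref{recover} for the isotropic case, which I would use as a template.
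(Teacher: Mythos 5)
Your route and the paper's both pass through Theorem~\ref{theo:conPhi0} via the identifications $\Phi_\circ\simeq\overline A$, $\Phi_n\simeq\overline A_n$, and both dispose of the \eqref{apr51} regime identically via Theorem~\ref{theo-conPhi}. Where you genuinely diverge is in how \eqref{eq:assumpt1} is verified. The paper first derives, arguing as in Lemma~\ref{lem:lem1} (the two-case O'Neil argument), the coordinatewise inequality $B_i(\Cf(s)t/2)\leq c+\Phi_n(s)+A_i(t)$ valid for \emph{all} $s,t\geq 0$; it then sums over $i$ to get $\Psi(\Cf(s)\xi/2)\leq nc+n\Phi_n(s)+\Phi(\xi)$, replaces $\xi$ by $\xi/\Cf(s)$, and only at the very end substitutes $s=\vartheta(\xi)$ and invokes \eqref{theta}, arriving at \eqref{eq:assumpt1} for the equivalent Young function $\tfrac1{n+1}\Psi(\cdot/2)$. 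You instead specialize to $s=\vartheta(\xi)$ from the outset, bound $|\xi_i|/\Cf(\vartheta)\leq A_i^{-1}\big(\Phi(\xi/\Cf(\vartheta))\big)$, use the balance $\Phi(\xi/\Cf(\vartheta))=\Phi_n(\vartheta)\simeq\overline A_n(\vartheta)$ to land on the exact expression to which \eqref{eq:ass-ortho} applies, and sum. Both routes are sound; the paper's is more modular, since the two-variable inequality absorbs the near-$t_0$ regime and all equivalence constants before $\vartheta$ ever enters, whereas your direct specialization forces you to juggle the equivalence slop from \eqref{ortho2}--\eqref{ortho3} simultaneously with \eqref{theta}, exactly the bookkeeping you flag as delicate.

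Two local corrections you should make. First, the displayed conclusion of your key step should read $B_i(|\xi_i|)\lesssim\overline A(s)$, not $B_i(|\xi_i|/\Cf(\vartheta(\xi)))\lesssim\overline A(s)$: the target is $\Psi(\xi)=\sum_i B_i(|\xi_i|)$, and as written your sum produces $\Psi(\xi/\Cf(\vartheta))$, which is not what \eqref{eq:assumpt1} asks for. Second, the bound $|\xi_i|/\Cf(\vartheta)\leq A_i^{-1}\big(\Phi(\xi/\Cf(\vartheta))\big)$ has nothing to do with \eqref{ortho1}; it is just $A_i(|\xi_i|/\Cf(\vartheta))\leq\sum_j A_j(|\xi_j|/\Cf(\vartheta))$ together with $A_i^{-1}(A_i(\tau))\geq\tau$. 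The identity \eqref{ortho1} enters only through $\Phi_n\simeq\overline A_n$, and the phrase ``on the set where $\Phi(\xi/\Cf(\vartheta))=\overline A(s)$'' should be dropped: this is a pointwise identity (up to equivalence) once $s$ is defined, not a restriction to a subset.
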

The special case of orthotropic power-type Sobolev spaces is analyzed below.
\begin{ex}
    \label{ex-ortho}
    Let $\Omega$ be an open set in $\rn$, with $n\geq 2$, such that $|\Omega| < \infty$ and let $f$ and $\Cf$ be as in Example \ref{ex:classical}. Given $p_i\geq 1$ and $q_i \geq 1$, for $i=1, \dots, n$, set 
    \begin{align}
        \label{ortho_1}
        \overrightarrow{p}= (p_1, \dots , p_n) \quad \text{and} \quad \overrightarrow{q}= (q_1, \dots , q_n).
    \end{align}
    Denote by $V^{1,\overrightarrow{p}}_0(\Omega)$  the orthotropic Orlicz-Sobolev spaces associated with the function $\Phi$ as in \eqref{Phi=pi}, with $p_i$ as in \eqref{ortho_1},
and let $V^{1,\overrightarrow{q}}_0(\Omega)$ be defined analogously. Let $\overline p$ be defined as
$$\frac{1}{\overline p}  = \frac{1}{n} \sum_{i=1}^{n} \frac{1}{p_{ i}}.$$
An application of Corollary \ref{coro:ortho} tells us that
\begin{align}
    \label{ortho_cont}
T_f: V^{1,\overrightarrow{p}}_0(\Omega) \to  V^{1,\overrightarrow{q}}_0(\Omega)
\end{align}
if:
\begin{enumerate}[(i)]
    \item $1\leq \overline p < n$ and 
\begin{equation*}
    \text{$\Cf(t)=t^{r}$ for some $r  {\geq} 0$ and  
$q_i \leq \frac{\overline pnp_i}{n\overline p + p_ir \left(n
 - \overline p\right)}$ \,\,for $i=1, \dots , n$},
\end{equation*}
\item $\overline p =n$ and 
$$\text{$\Cf(t)=  \exp({t^{n'}})$  
and  $q_i<p_i$ \,\,for $i=1, \dots , n$,}$$
\item $\overline p >n$ and $$\text{$q_i  = p_i$ \,\,for $i=1, \dots , n$.}$$
\end{enumerate}
Different choices of the function $E$ yield \begin{align}
    \label{ortho_cont-bis}
T_f: V^{1,\overrightarrow{p}}_0(\Omega) \to  V^{1,\Psi}_0(\Omega)
\end{align}
with $\Psi$ non-necessarily of power type. For instance, set $\overrightarrow{\alpha} = (\alpha_1, \dots , \alpha_n)$ and denote by  $ V_0^{1}L^{\overrightarrow{p}}(\log L)^{\overrightarrow{\alpha}}(\Omega)$ the anisotropic Orlicz-Sobolev space built upon the $n$-dimensional Young function
$$\Psi (\xi) = \sum_{i=1}^n |\xi|^{p_i}\log^{\alpha_i} (c+|\xi|)$$
for sufficiently large $c$. Then, in the borderline case (ii), namely when $\overline p =n$, one has 
\begin{align}
   T_f: V_0^{1, \overrightarrow{p}}(\Omega) \to  \begin{cases}
       V_0^{1}L^{\overrightarrow{p}}(\log L)^{-\overrightarrow{\alpha}}(\Omega)& \quad  \text{for $\alpha_i \geq \frac{rp_i}{n'}$\quad if $\Cf(t) = t^r$ for some $r > 0$}\\
       V_0^{1}L^{\overrightarrow{p}}(\log \log L)^{-\overrightarrow{\alpha}}(\Omega)& \quad  \text{for $\alpha_i \geq rp_i$\quad if $\Cf(t) = \log^r(1+t)$ for some $r > 0$.}
   \end{cases}
\end{align}

\end{ex}

 \section{Technical lemmas}\label{sec:tech}

\subsection{Isotropic spaces}

We begin with some properties of Young functions related to the assumptions of our main results.
\begin{lem}\label{lem:lem1}
    Let $n\geq 2$, let $A$ and $B$ be Young functions, and let   $\Cf : [0, \infty) \to [0, \infty)$ be a
    non-decreasing function.
    Assume that $A$ fulfills the condition~\eqref{A-inf}   and the inequality \eqref{eq:inq-ass2}
holds.
    Then, there exists a constant $c$ such that 
    \begin{equation}\label{eq:lem1}
        B\left(\Cf(s)t/2\right) \leq c + A_n(s) + A(t) \quad \text{for $t,s >0$.}
    \end{equation}
\end{lem}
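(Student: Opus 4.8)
The plan is to exploit the definition of the Sobolev conjugate $A_n$ together with the hypothesis \eqref{eq:inq-ass2}, splitting the argument according to the relative size of $H_n(t)$ and $s$. Recall that $A_n(s) = A(H_n^{-1}(s))$, equivalently $H_n^{-1}(s)$ is roughly the value $\tau$ with $H_n(\tau)=s$; so $A_n(H_n(t)) = A(t)$ for $t\geq 0$ (up to the usual care with generalized inverses). The key dichotomy is whether $H_n(t) \leq s$ or $H_n(t) > s$.

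\emph{Case $H_n(t) \leq s$.} Here $\Cf$ is non-decreasing, so $\Cf(H_n(t)) \leq \Cf(s)$, hence I cannot directly compare $\Cf(s)t$ with the left side of \eqref{eq:inq-ass2}; instead I flip the roles. Actually the cleaner split is by whether $s$ is large. First handle $t$ small: if $t \leq t_0$, then $\Cf(s)t/2 \leq \Cf(s)t_0/2$, and one must still control $B(\Cf(s)t_0/2)$ in terms of $A_n(s)$; this uses that $A_n$ grows and that, for $t_0$ fixed, one can find $\tau_0$ with $\Cf(H_n(\tau_0)) t_0 \geq$ whatever is needed, comparing against $A_n(s) = A(H_n^{-1}(s))$. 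So the genuinely substantive case is $t \geq t_0$ and then apply \eqref{eq:inq-ass2}: $B(t\Cf(H_n(t))) \leq A(t)$. Now if additionally $s \leq H_n(t)$, monotonicity of $\Cf$ gives $\Cf(s) \leq \Cf(H_n(t))$, whence $B(\Cf(s)t/2) \leq B(\Cf(s)t) \leq B(\Cf(H_n(t))t) \leq A(t)$, using \eqref{Alambda} to drop the $1/2$ and absorb it. This disposes of the regime $t_0 \leq t$, $s \leq H_n(t)$.

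\emph{Case $s > H_n(t)$ (and $t \geq t_0$).} Then $H_n^{-1}(s) \geq t$ roughly, so $A(t) \leq A(H_n^{-1}(s)) = A_n(s)$. Set $\tau = H_n^{-1}(s)$, so $H_n(\tau) \approx s$ and $\tau \geq t$. If also $\tau \geq t_0$, apply \eqref{eq:inq-ass2} at $\tau$: $B(\tau \Cf(H_n(\tau))) \leq A(\tau) = A_n(s)$. Since $\Cf(s) = \Cf(H_n(\tau))$ (up to the inverse subtleties, handled by the $\simeq$/constant conventions mentioned in the paper) and $\tau \geq t$, we get $B(\Cf(s) t/2) \leq B(\Cf(s)\tau) = B(\Cf(H_n(\tau))\tau) \leq A_n(s)$. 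The leftover piece is $\tau < t_0$, i.e. both $t$ and $s$ are "small"; there everything is bounded by a constant $c$ depending only on $A, B, \Cf, t_0$, since $\Cf(s) \leq \Cf(H_n(t_0))$ and $t \leq \tau \cdot(\text{something}) < t_0$, so $B(\Cf(s)t/2)$ is bounded. Collecting the three additive bounds $c$, $A(t)$, $A_n(s)$ over all cases yields \eqref{eq:lem1}.

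\emph{Main obstacle.} The delicate point is the bookkeeping around the generalized right-continuous inverses: $H_n^{-1}(H_n(t))$ need not equal $t$, and $H_n$ may be bounded (under \eqref{A-convinf}, which is here \emph{excluded} by \eqref{A-inf}, so in fact $H_n(s)\to\infty$, which helps) or have flat pieces. I expect to need the elementary facts $t \leq A^{-1}(A(t))$ stated in the excerpt, plus monotonicity, and to absorb any mismatch into the additive constant $c$ and into applications of \eqref{Alambda}. The other mildly annoying step is the "small $t$, large $s$" and "small $s$, large $t$" corners, where \eqref{eq:inq-ass2} cannot be invoked (the relevant argument falls below $t_0$) and one instead estimates crudely, using continuity and monotonicity of $\Cf$ and the divergence \eqref{A-inf} which forces $A_n(s)\to\infty$ as $s\to\infty$, so that a fixed threshold contributes only to the constant. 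Once these corners are absorbed into $c$, the inequality follows by adding the per-case estimates.
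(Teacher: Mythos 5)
Your proposal is essentially the paper's argument: both split on whether $A_n(s)\geq A(t)$ or the reverse (equivalently, whether $s\geq H_n(t)$), and then invoke \eqref{eq:inq-ass2} at the larger of the two arguments using monotonicity of $\Cf$, $A$, and $A_n$. The paper streamlines the bookkeeping by first rewriting \eqref{eq:inq-ass2} in inverse form as $A^{-1}(t)\Cf(A_n^{-1}(t))\leq B^{-1}(t)+c'$ with $c'=t_0\Cf(A_n^{-1}(A(t_0)))$ valid for all $t\geq 0$ and then arguing \`a la O'Neil, which absorbs the small-argument corners into the additive constant at the start; your worry about $H_n(H_n^{-1}(s))\neq s$ is in fact moot, since \eqref{A-inf} forces $A$ to be finite-valued, so $H_n$ is a continuous strictly increasing bijection of $[0,\infty)$ and its generalized inverse is the ordinary one.
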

\begin{proof}
As the condition~\eqref{A-inf} ensures that $A$ is finite valued, the inequality \eqref{eq:inq-ass2} implies that $B$ is also finite valued and
\begin{equation}\label{eq:inq-ass}
      A^{-1}(t) \Cf(A_n^{-1}(t)) \leq B^{-1}(t)\quad \text{for $t \geq A(t_0)$.} 
    \end{equation}
    Therefore,
\begin{equation}\label{eq:inq-ass'}
      A^{-1}(t) \Cf(A_n^{-1}(t)) \leq B^{-1}(t) + c'\quad \text{for $t \geq 0$,} 
    \end{equation}    
where $c'= t_0\Cf(A_n^{-1}(A(t_0)))$.
The inequality \eqref{eq:lem1} can be deduced from \eqref{eq:inq-ass'} via the following adaptation of an argument from 
the proof of~\cite[Theorem 2.1]{ONeil}. Let $s, t\geq 0$. If $A_n(s)\geq A(t)$, then
\begin{align}
    \label{apr5}
E(s)t \leq E(A_n^{-1}(A_n(s)))A^{-1}(A(t)) \leq E(A_n^{-1}(A_n(s)))A^{-1}(A_n(s)) \leq B^{-1}(A_n(s))+c',
\end{align}
whence
\begin{align}\label{apr6}
    B(E(s)t/2) \leq B((B^{-1} (A_n(s))+c')/2)\leq A_n(s)+ B(c').
\end{align}
If $A(t)\geq A_n(s)$, then
\begin{align}
    \label{apr7}
E(s)t \leq E(A_n^{-1}(A_n(s)))A^{-1}(A(t)) \leq E(A_n^{-1}(A(t)))A^{-1}(A(t)) \leq B^{-1}(A(t))+c',
\end{align}
whence
\begin{align}\label{apr8}
    B(E(s)t/2) \leq B((B^{-1} (A(t))+c)/2) \leq A(t)+B(c').
\end{align}
The inequality \eqref{eq:lem1} follows from \eqref{apr6} and \eqref{apr8} with $c=B(c')$.
\end{proof}
An analogous (and simpler) argument as in the proof of Lemma \ref{lem:lem1} yields the following result.
\begin{lem}\label{lem:inqD}
    Let $A$, $B$, and $\Df$ be Young functions, and let $\Cf : [0, \infty) \to [0, \infty)$ be a
    non-decreasing function.   Assume that  $A$ and $F$ are finite-valued, and
    \begin{equation}\label{eq:inq-assD2}
    B(t\Cf(\Df^{-1}(A(t))))\leq A(t)
        \quad \text{ for } \quad  \text{$ t \geq 0$.}
    \end{equation}
    Then
    \begin{equation}\label{eq:consinq}
    B(\Cf(s)t) \leq \Df(s) + A(t) \quad \text{ for} \,\,\,s, t \geq 0. 
\end{equation}
\end{lem}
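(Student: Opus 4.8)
The plan is to mimic the proof of Lemma~\ref{lem:lem1}, but the argument is simpler because there is no case distinction. The hypothesis~\eqref{eq:inq-assD2} is assumed to hold for \emph{all} $t \geq 0$, not just for large $t$, so no correction constant $c'$ is needed. First I would rewrite~\eqref{eq:inq-assD2} in terms of generalized inverses. Since $A$ and $\Df$ are finite-valued and $\Df$ is a Young function, its inverse $\Df^{-1}$ is well behaved; applying $B^{-1}$ to both sides of~\eqref{eq:inq-assD2} and using the standard inequality $s \leq A^{-1}(A(s))$ together with monotonicity, one deduces
\begin{equation*}
    A^{-1}(\tau)\, \Cf\big(\Df^{-1}(\tau)\big) \leq B^{-1}(\tau) \quad \text{for all } \tau \geq 0.
\end{equation*}
Here one must be slightly careful: strictly speaking~\eqref{eq:inq-assD2} gives $B\big(A^{-1}(A(t))\,\Cf(\Df^{-1}(A(t)))\big) \leq A(t)$ at the values $\tau = A(t)$, but since $A$ is a finite-valued Young function its range is an interval $[0, \sup A)$, and for $\tau$ beyond the range of $A$ the inequality $B^{-1}(\tau) = \infty$ makes the claim trivial; this needs a line of comment.

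Next, given $s, t \geq 0$, I would split according to whether $\Df(s) \geq A(t)$ or $A(t) \geq \Df(s)$, exactly as in the two displays~\eqref{apr5} and~\eqref{apr7} of the previous proof. In the first case, write
\begin{equation*}
    \Cf(s)\, t \leq \Cf\big(\Df^{-1}(\Df(s))\big)\, A^{-1}(A(t)) \leq \Cf\big(\Df^{-1}(\Df(s))\big)\, A^{-1}(\Df(s)) \leq B^{-1}(\Df(s)),
\end{equation*}
using $s \leq \Df^{-1}(\Df(s))$, $t \leq A^{-1}(A(t))$, monotonicity of $A^{-1}$ and $\Cf$, and the displayed inequality above with $\tau = \Df(s)$. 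Applying $B$ and using $B(B^{-1}(\tau)) \leq \tau$ gives $B(\Cf(s)t) \leq \Df(s)$. In the second case, symmetrically, $\Cf(s)t \leq \Cf(\Df^{-1}(A(t)))\,A^{-1}(A(t)) \leq B^{-1}(A(t))$, whence $B(\Cf(s)t) \leq A(t)$. In either case $B(\Cf(s)t) \leq \Df(s) + A(t)$, which is~\eqref{eq:consinq}.

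I do not anticipate a serious obstacle here; the result is a routine variant of Lemma~\ref{lem:lem1} and of the classical O'Neil-type argument cited there. The only points requiring mild care are: (a) justifying the passage to inverses when the Young functions may fail to be strictly increasing or may take the value $\infty$ outside the relevant ranges — handled by the convention~\eqref{A-inverse} and the finiteness hypotheses on $A$ and $\Df$; and (b) noting that unlike in Lemma~\ref{lem:lem1}, since the hypothesis holds for all $t \geq 0$ rather than only near infinity, the additive constant and the factor $1/2$ in the argument of $B$ are unnecessary, which is why~\eqref{eq:consinq} is cleaner than~\eqref{eq:lem1}. I would simply present the two-case computation in two short displays and conclude.
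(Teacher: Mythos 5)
Your proposal is correct and takes exactly the route the paper intends: it explicitly says that Lemma~\ref{lem:inqD} follows by \emph{an analogous (and simpler) argument as in the proof of Lemma~\ref{lem:lem1}}, which is precisely the O'Neil-type two-case split on $\Df(s) \gtrless A(t)$ that you carry out. The two case computations you display are sound, and since \eqref{eq:inq-assD2} holds for all $t\geq 0$ (not merely for large $t$), no correction constant $c'$ or factor $1/2$ appears, giving the clean form \eqref{eq:consinq}.

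One small remark on the passage to inverses: the sentence ``strictly speaking~\eqref{eq:inq-assD2} gives $B\big(A^{-1}(A(t))\,\Cf(\Df^{-1}(A(t)))\big) \leq A(t)$'' is not quite right, since $A^{-1}(A(t)) \geq t$ and the hypothesis controls $B(t\,\Cf(\cdots))$, not $B(A^{-1}(A(t))\,\Cf(\cdots))$. The correct justification is simpler: because $A$ is a finite-valued Young function, it is convex (hence continuous) on $[0,\infty)$ and tends to $\infty$, so $A$ is onto $[0,\infty)$ and $A(A^{-1}(\tau))=\tau$ for every $\tau\geq 0$; substituting $t=A^{-1}(\tau)$ into \eqref{eq:inq-assD2} yields directly $B\big(A^{-1}(\tau)\,\Cf(\Df^{-1}(\tau))\big)\leq \tau$, hence $A^{-1}(\tau)\,\Cf(\Df^{-1}(\tau))\leq B^{-1}(\tau)$ for all $\tau\geq 0$. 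Your caveat about ``$\tau$ beyond the range of $A$'' is therefore moot. With that inverse inequality in hand, your two-case argument goes through verbatim.
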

}
 
\begin{lem}
    \label{LemmaF=An}
Let $n\geq 2$ and let $A$ be a Young function satisfying the condition \eqref{A-0}. Then 
\begin{align}
    \label{apr1}
        \limsup_{t \to 0}\frac{A_n(\lambda t)}{A(t)} < \infty \quad \text{for every $\lambda >0$.}
\end{align}
\end{lem}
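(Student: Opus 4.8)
The claim is a statement about the behaviour of the Sobolev conjugate $A_n$ near zero, under the convergence assumption \eqref{A-0} at the origin, namely $\int_0 (t/A(t))^{1/(n-1)}\d t<\infty$. The natural route is to unwind the definitions \eqref{sobconj}--\eqref{Hn}: by \eqref{sobconj} we have $A_n(t)=A(H_n^{-1}(t))$, so $A_n(\lambda t)=A\big(H_n^{-1}(\lambda t)\big)$. The ratio $A_n(\lambda t)/A(t)$ thus becomes $A\big(H_n^{-1}(\lambda t)\big)/A(t)$, and since $A$ is increasing it suffices to control $H_n^{-1}(\lambda t)$ in terms of $t$, i.e. to show that $H_n^{-1}(\lambda t)\leq C_\lambda\, t$ for small $t$, or equivalently (passing to $H_n$, which is increasing) that $H_n(s)\geq c_\lambda s$ for small $s$. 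Then $A$-convexity via \eqref{Alambda} turns the factor $C_\lambda$ (assumed $\geq 1$, else trivial) into a multiplicative constant in front of $A(t)$, giving $\limsup_{t\to0} A_n(\lambda t)/A(t)\leq C_\lambda<\infty$.

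**Key step: the lower bound on $H_n$ near zero.** From \eqref{Hn}, $H_n(s)=\big(\int_0^s (t/A(t))^{1/(n-1)}\d t\big)^{1/n'}$. Set $g(t)=(t/A(t))^{1/(n-1)}$; by \eqref{incr} the quantity $A(t)/t$ is non-decreasing, so $t/A(t)$ is non-increasing, hence $g$ is non-increasing on $(0,\infty)$. Therefore $\int_0^s g(t)\d t\geq s\, g(s)$, which gives $H_n(s)\geq \big(s\,g(s)\big)^{1/n'} = s^{1/n'}\big(s/A(s)\big)^{1/(n(n-1)}$ — wait, the exponents need care: $(s\,g(s))^{1/n'}=s^{1/n'}\,(s/A(s))^{1/((n-1)n')}$, and since $n'=n/(n-1)$ this reads $s^{(n-1)/n}\,(s/A(s))^{1/n}=(s^{n-1}\cdot s/A(s))^{1/n}=(s^n/A(s))^{1/n}=s\,A(s)^{-1/n}$. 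So $H_n(s)\geq s\,A(s)^{-1/n}$. Since $A(s)\to A(0^+)$; if $A$ is finite near zero then $A(s)$ is bounded on $(0,s_0]$, say by $M$, giving $H_n(s)\geq s/M^{1/n}=c\,s$ for $s\le s_0$. (If $A$ takes the value $+\infty$ on $(0,\infty)$ then $A(t)=\infty$ for all $t>0$ by convexity, contradicting \eqref{A-0}, so this case does not arise; one should note this.) This is the only real obstacle, and it is mild — the monotonicity of $t\mapsto t/A(t)$ is exactly the content of \eqref{incr}.

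**Assembling.** Given $\lambda>0$, from $H_n(s)\geq c\,s$ for $s\leq s_0$ we get, for $\lambda t$ small, $H_n^{-1}(\lambda t)\leq \lambda t/c$ (using monotonicity and the generalized-inverse inequality $t\le A^{-1}(A(t))$ type relations, or directly: $H_n(\lambda t/c)\ge c\cdot\lambda t/c=\lambda t$, hence $H_n^{-1}(\lambda t)\le \lambda t/c$ by the definition \eqref{A-inverse} of the inverse). Put $\Lambda=\max\{\lambda/c,1\}$; then $H_n^{-1}(\lambda t)\le \Lambda t$ for $t$ in a right-neighbourhood of $0$, so by monotonicity of $A$ and then \eqref{Alambda} (with $\Lambda\ge1$),
\[
A_n(\lambda t)=A\big(H_n^{-1}(\lambda t)\big)\le A(\Lambda t)\le \Lambda\,A(t),
\]
whence $A_n(\lambda t)/A(t)\le \Lambda$ for all such $t$, and taking $\limsup_{t\to 0}$ yields the claim. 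I would also remark that $A(t)>0$ for small $t>0$ (non-degeneracy near zero is forced by \eqref{A-0}: if $A\equiv0$ near zero the integral diverges — actually $A$ non-constant and convex with $A(0)=0$ makes $A(t)>0$ for $t$ beyond some point, and \eqref{A-0} controls the small-$t$ side), so the ratio is well defined. The whole argument is short; the delicate point is simply getting the exponent arithmetic in the lower bound $H_n(s)\ge s\,A(s)^{-1/n}$ right, which is where I would be most careful.
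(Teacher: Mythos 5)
Your proof is correct, and it reaches the same conclusion by a route that is close in spirit but not identical to the paper's. Both arguments start from \eqref{incr}, but the paper uses it as a pointwise lower bound on the integrand: $t/A(t)\geq 1/A(1)$ for $t\in(0,1)$, which yields $H_n(s)\geq c\,s^{1/n'}$ and hence $H_n^{-1}(\lambda t)\leq (\lambda/c)^{n'}\,t^{1/(n-1)}\,t\leq t$ for small $t$ (since $1/(n-1)>0$), so $A_n(\lambda t)\leq A(t)$ directly and the $\limsup$ is $\leq 1$ with no further work. You instead use \eqref{incr} to conclude that $g(t)=(t/A(t))^{1/(n-1)}$ is non-increasing, bound the integral by the rectangle $\int_0^s g\geq s\,g(s)$ to obtain $H_n(s)\geq s\,A(s)^{-1/n}$, then appeal to finiteness of $A$ near zero to weaken this to a merely linear bound $H_n(s)\geq c\,s$, and finally invoke \eqref{Alambda} to convert $A(\Lambda t)$ into $\Lambda A(t)$. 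The exponent arithmetic you were worried about checks out: $H_n(s)\geq s\,A(s)^{-1/n}$ is right. The price of your weaker (linear rather than superlinear) lower bound on $H_n$ is the extra appeal to \eqref{Alambda} and a $\lambda$-dependent constant $\Lambda$ in the final bound, but both are harmless for the stated conclusion.

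One side remark of yours is slightly off: if $A(t_0)=\infty$ for some $t_0>0$, convexity only forces $A\equiv\infty$ on $[t_0,\infty)$, not on all of $(0,\infty)$; and if $A\equiv\infty$ on $(0,\infty)$ then the integrand in \eqref{A-0} is identically zero, so the integral converges and there is no contradiction with \eqref{A-0}. The correct justification that $A$ is finite in a right-neighbourhood of $0$ is simply that a Young function which is infinite at arbitrarily small arguments must, by convexity, be identically $\infty$ on $(0,\infty)$, which is excluded by the requirement that $A$ be non-constant there; \eqref{A-0} then forces $A>0$ near zero. None of this affects the validity of your main argument.
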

\begin{proof}
By the property \eqref{incr},
$$\frac {A(t)}t \leq \frac {A(1)}1 \quad \text{for $t\in (0,1)$.}$$
Hence, there exists a constant $c>0$ such
 that
$$H_n(t) \geq c t^{\frac 1{n'}} \quad \text{for $t\in (0,1)$.}$$
Therefore,
$$H_n^{-1}(t) \leq (t/c)^{n'} \quad \text{near zero.}$$
Fixing $\lambda >0$, we have that $\lambda t \leq 1$ near zero. Thus,
$$H_n^{-1}(\lambda t) \leq \lambda^{n'} t^{\frac 1{n-1}} t \leq t \quad \text{near zero.}$$
Thus,
$$\frac{A_n(\lambda t)}{A(t)} = \frac{A(H_n^{-1}(\lambda t))}{A(t)}\leq \frac{A(t)}{A(t)}=1 \quad \text{near zero,}$$
whence \eqref{apr1} follows.
\end{proof}

The following results concern the convergence properties of sequences in Orlicz and Orlicz-Sobolev spaces.

{\color{black}
\begin{lem}
    \label{int-conv}
    Let $\Omega$ be a measurable subset of $\rn$, with $n\geq 1$, and let $A$ be a finite-valued  non-degenerate Young function. Assume that the sequence $\{u_k\} \subset L^A(\Omega)$ and the function $u\in L^A(\Omega)$ satisfy
    $$\int_\Omega A\Big(\frac{|u_k-u|}{\lambda_0}\Big)\d x \xrightarrow{k \to \infty} 0$$
    for some $\lambda_0>0$. {Let $\lambda_1>0$ be such that
    $$\int_\Omega A\Big(\frac{|u|}{\lambda_1}\Big)\d x<\infty.$$}
 Then,  
    \begin{align}
        \label{int-conv1}
        \int_\Omega A\Big(\frac{|u_k|}\lambda\Big)\d x \xrightarrow{k \to \infty}  \int_\Omega A\Big(\frac{|u|}\lambda\Big)\d x
    \end{align}
    for every $\lambda \geq 2\max(\lambda_0, \lambda_1\}$. 
    \\ In particular, if $\{u_k\} \subset E^{A}(\Omega)$, $u \in E^{A}(\Omega)$ and $\|u_k - u\|_{L^{A}(\Omega)}\to 0$, then~\eqref{int-conv1} holds for every $\lambda > 0$.
\end{lem}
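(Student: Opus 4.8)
The plan is to prove the convergence \eqref{int-conv1} by a Vitali-type argument, exploiting the modular convergence hypothesis together with the finiteness of the modular of $u$ at level $\lambda_1$. First I would fix $\lambda \geq 2\max\{\lambda_0,\lambda_1\}$ and write, pointwise a.e., $\frac{|u_k|}{\lambda}\leq \frac12\big(\frac{|u_k-u|}{\lambda_0}+\frac{|u|}{\lambda_1}\big)$, so that by convexity of $A$ one gets the domination
\begin{equation*}
A\Big(\frac{|u_k|}{\lambda}\Big)\leq \tfrac12 A\Big(\frac{|u_k-u|}{\lambda_0}\Big)+\tfrac12 A\Big(\frac{|u|}{\lambda_1}\Big).
\end{equation*}
The second term is a fixed integrable function by hypothesis, and the integrals of the first term tend to $0$. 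Up to passing to a subsequence, $u_k\to u$ a.e.\ (this uses that modular convergence of a non-degenerate, finite-valued Young function implies convergence in measure, as recalled in the background), and hence $A(|u_k|/\lambda)\to A(|u|/\lambda)$ a.e.\ by continuity of $A$. One would then like to invoke dominated convergence, but the dominating function $\tfrac12 A(\frac{|u_k-u|}{\lambda_0})+\tfrac12 A(\frac{|u|}{\lambda_1})$ is not fixed; instead I would use the generalized dominated convergence theorem (Pratt's lemma), since the dominating sequence converges in $L^1$ to $\tfrac12 A(\frac{|u|}{\lambda_1})$.

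More concretely, the key step is to apply the following version of dominated convergence: if $g_k\to g$ a.e., $|g_k|\leq h_k$, $h_k\to h$ a.e.\ and $\int h_k\to\int h<\infty$, then $\int g_k\to\int g$. Here I would set $g_k=A(|u_k|/\lambda)$, $g=A(|u|/\lambda)$, $h_k=\tfrac12 A(\frac{|u_k-u|}{\lambda_0})+\tfrac12 A(\frac{|u|}{\lambda_1})$ and $h=\tfrac12 A(\frac{|u|}{\lambda_1})$; the convergence $\int h_k\to\int h$ is exactly the hypothesis $\int_\Omega A(\frac{|u_k-u|}{\lambda_0})\d x\to 0$ plus the fixed finite term. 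This yields \eqref{int-conv1} along the chosen subsequence. Since the limit $\int_\Omega A(|u|/\lambda)\d x$ is the same for every subsequence, a standard subsequence-of-subsequence argument upgrades the conclusion to the full sequence.

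For the final assertion, suppose $\{u_k\}\subset E^A(\Omega)$, $u\in E^A(\Omega)$ and $\|u_k-u\|_{L^A(\Omega)}\to 0$. By \eqref{eq:normconv-def}, norm convergence means $\int_\Omega A(\frac{|u_k-u|}{\mu})\d x\to 0$ for \emph{every} $\mu>0$, so the first hypothesis holds with $\lambda_0$ arbitrarily small. Since $u\in E^A(\Omega)$, the modular $\int_\Omega A(\frac{|u|}{\lambda_1})\d x$ is finite for every $\lambda_1>0$, so $\lambda_1$ may also be taken arbitrarily small. Thus the threshold $2\max\{\lambda_0,\lambda_1\}$ can be made smaller than any prescribed $\lambda>0$, and the first part of the lemma applies to give \eqref{int-conv1} for every $\lambda>0$.

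The main obstacle is the non-fixed dominating function: one cannot apply the ordinary dominated convergence theorem because the natural majorant of $A(|u_k|/\lambda)$ involves $u_k$ itself. Handling this correctly requires either Pratt's generalized dominated convergence theorem or an equivalent splitting (e.g.\ writing the majorant as a convergent-in-$L^1$ sequence plus a fixed integrable function and using uniform integrability in the Vitali theorem). A secondary technical point is the passage from a.e.\ convergence along a subsequence to convergence of the full sequence of integrals, which is routine once the subsequence argument is set up, and the reduction to convergence in measure, which is exactly where non-degeneracy and finite-valuedness of $A$ enter.
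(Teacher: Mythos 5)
Your argument is correct, and it reaches the conclusion by a genuinely different route than the paper's.

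The paper proceeds in two stages: for $|\Omega|<\infty$ it uses the Vitali convergence theorem (uniform integrability of $\{A(|u_k-u|/\lambda_0)\}$, hence of $\{A(|u_k|/\lambda)\}$ via the convexity bound $A(|u_k|/\lambda)\leq A(2|u_k-u|/\lambda)+A(2|u|/\lambda)$, plus convergence in measure); for unbounded $\Omega$ it then runs a separate exhaustion argument, applying the bounded case on $\Omega\cap B(0,R)$ and estimating the tail $\int_{\Omega\setminus B(0,R)}$ via $2\int_{\Omega\setminus B(0,R)}A(2|u|/\lambda)\to 0$ as $R\to\infty$. Your proof instead dominates $A(|u_k|/\lambda)$ by $\tfrac12 A(|u_k-u|/\lambda_0)+\tfrac12 A(|u|/\lambda_1)$ and applies Pratt's generalized dominated convergence theorem (together with a subsequence-of-subsequences step to pass from a.e.\ convergence along a subsequence to convergence of the integrals along the full sequence). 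The advantage of your route is that Pratt's lemma holds on an arbitrary measure space, so the finite and infinite measure cases are handled simultaneously, with no separate tightness or exhaustion argument; the Vitali route is closer in spirit to the other convergence arguments in the paper (which repeatedly invoke Vitali), so the authors' choice makes the toolset uniform. The one spot worth stating explicitly in a polished write-up is the inequality $g\leq h$, i.e.\ $A(|u|/\lambda)\leq\tfrac12 A(|u|/\lambda_1)$, which follows from $\lambda\geq 2\lambda_1$ and $A(t/2)\leq\tfrac12 A(t)$; it ensures $\int g<\infty$ so that Pratt's lemma applies. Your treatment of the final assertion (about $E^A(\Omega)$ and norm convergence) matches the paper's.
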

\begin{proof}
 First, assume that $|\Omega| < \infty$. By Vitali Convergence Theorem, the sequence 
    $\Big\{A\Big(\frac{|u_k-u|}{ \lambda_0}\Big)\Big\}$ is uniformly integrable and converges to $0$ in measure as $k \to \infty$. By the convexity of $A$, 
    \begin{align*}
A\Big(\frac{|u_k|}\lambda\Big)  \leq   A\Big(\frac{2|u_k-u|}{ \lambda}\Big) +  A\Big(\frac{2|u|}{\lambda}\Big) \quad \text{for $\lambda \geq 2\max\{\lambda_0, \lambda_1\}$.}
    \end{align*}
Hence, the sequence  $\Big\{A\Big(\frac{|u_k|}{ \lambda}\Big)\Big\}$ is also uniformly integrable. 
\\ On the other hand, the convergence in measure of $\Big\{A\Big(\frac{|u_k-u|}{ \lambda_0}\Big)\Big\}$ to $0$ implies that $u_k \to u$ in measure. In turn, this implies that the sequence $\Big\{A\Big(\frac{|u_k|}{ \lambda}\Big)\Big\}$ converges in measure to $\Big\{A\Big(\frac{|u|}{ \lambda}\Big)\Big\}$ for $\lambda >0$. The limit \eqref{int-conv1} follows from an application of Vitali Convergence Theorem again.\\
{\color{black} Now,   assume that $\Omega$ is an arbitrary open set. Given $R > 0$, denote by $B_R$ the ball centered at $0$, with radius $R$. An application of ~\eqref{int-conv1} to $\Omega \cap B(0, R)$ enables us to deduce that
\begin{align*}
    &\limsup_{k \to \infty} \left| \int_{\Omega} A \left( \frac{|u_k|}{\lambda} \right) \d x - \int_{\Omega} A \left( \frac{|u|}{\lambda} \right) \d x \right|
    \leq \limsup_{k \to \infty} \left|\int_{\Omega \setminus B(0, R)} A \left( \frac{|u_k|}{\lambda} \right) \d x - \int_{\Omega \setminus B(0, R)} A \left( \frac{|u|}{\lambda} \right) \d x \right|\\
    &\leq \limsup_{k \to \infty} \int_{\Omega \setminus B(0, R)} A \left( \frac{2|u_k -  u|}{\lambda} \right) \d x + 2\int_{\Omega \setminus B(0, R)} A \left( \frac{2|u|}{\lambda} \right) \d x
    = 2\int_{\Omega \setminus B(0, R)} A \left( \frac{2|u|}{\lambda} \right) \d x.
\end{align*}
Hence, we get~\eqref{int-conv1} by passing to the limit as $R\to \infty$.
\\ The assertion concerning sequences in $L^E(\Omega)$
 is  a consequence of~\eqref{int-conv1} and of property~\eqref{eq:normconv-def}.
}
\end{proof}}

\begin{lem}\label{lem:convEAn}
   Assume that $n\geq 2$ and $\Omega \in \mathcal G_{1/n'}$. Let $A$ be a  non-degenerate Young function satisfying the condition \eqref{A-inf}.
   Assume that the sequence  $\{u_k\} \subset W^{1, A}(\Omega)$  and the function $u \in  W^{1, A}(\Omega)$ are such that $u_k \to u$ modularly in $W^{1, A}(\Omega)$.
    Then ${\|u_k - u\|_{L^{A_n}(\Omega)} \xrightarrow{k \to \infty} 0}$.
\end{lem}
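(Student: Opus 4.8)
The goal is to show that modular convergence of $u_k\to u$ in $W^{1,A}(\Omega)$ forces norm convergence $\|u_k-u\|_{L^{A_n}(\Omega)}\to 0$, where $A_n$ is the Sobolev conjugate. The plan is to combine the optimal Orlicz--Sobolev--Poincar\'e inequality \eqref{eq:sobemb} on the domain $\Omega\in\mathcal G_{1/n'}$ with the embedding property \eqref{embEsigma} (here $\sigma=n$), which under \eqref{A-inf} gives $W^{1,A}(\Omega)\to E^{A_n}(\Omega)$, together with the characterization \eqref{eq:normconv-def} of norm convergence in Orlicz spaces: one must verify that $\int_\Omega A_n(|u_k-u|/\lambda)\,\d x\to 0$ for \emph{every} $\lambda>0$.

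First I would record that modular convergence $u_k\to u$ in $W^{1,A}(\Omega)$ means there is a fixed $\mu>0$ with $\int_\Omega A(|u_k-u|/\mu)\,\d x\to 0$ and $\int_\Omega A(|\nabla u_k-\nabla u|/\mu)\,\d x\to 0$; in particular the quantities $\int_\Omega A(|\nabla u_k - \nabla u|/\mu)\,\d x$ are bounded, say by some $M$, and tend to $0$. Applying \eqref{eq:sobemb} to the function $v_k := u_k - u \in W^{1,A}(\Omega)$ and noting that the mean of $v_k$ contributes only a translation, I would want to control $\int_\Omega A_n\big(|v_k - (v_k)_\Omega|/(c(\int_\Omega A(|\nabla v_k|/\mu)\,\d x)^{1/n})\big)\,\d x$. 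Since $\int_\Omega A(|\nabla v_k|/\mu)\,\d x\to 0$, the normalizing constant in the denominator shrinks to $0$, so after rescaling (using \eqref{Alambda} to absorb the $\mu$ and the vanishing factor) one gets, for each fixed $\lambda>0$ and all large $k$, a bound of the form $\int_\Omega A_n(|v_k - (v_k)_\Omega|/\lambda)\,\d x \le \int_\Omega A(|\nabla v_k|/\mu)\,\d x \to 0$. This handles the oscillation part; the remaining point is to dispose of the average $(v_k)_\Omega$, which by the Poincar\'e inequality in $L^1$ (valid since $\Omega\in\mathcal G_{1/n'}$, so $\Omega$ is in particular a domain supporting $\|w-w_\Omega\|_{L^{1}}\lesssim \|\nabla w\|_{L^1}$) and the modular-to-$L^1$ convergence noted in the background tends to $0$; then $A_n((v_k)_\Omega/\lambda)\cdot|\Omega|\to 0$ because $A_n$ is continuous at $0$ with $A_n(0)=0$.

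Assembling these, for each $\lambda>0$ I would split $\int_\Omega A_n(|v_k|/\lambda)\,\d x \le \int_\Omega A_n(2|v_k-(v_k)_\Omega|/\lambda)\,\d x + \int_\Omega A_n(2|(v_k)_\Omega|/\lambda)\,\d x$ using convexity of $A_n$, and send both terms to $0$ by the two estimates above; by \eqref{eq:normconv-def} this is exactly $\|v_k\|_{L^{A_n}(\Omega)}\to 0$. One technical caveat to address carefully: $A_n$ may be infinite at large arguments (when \eqref{A-convinf-sigma} holds), but this is harmless here because the arguments appearing, $|v_k - (v_k)_\Omega|$ divided by the vanishing normalizer, are controlled pointwise through the inequality \eqref{eq:sobemb} itself, whose right-hand side is finite; alternatively, one uses the version of $A_n$ obtained by the replacement of $A$ near zero, under which everything is finite-valued, and this does not change the $L^{A_n}$ norm since the replacement only affects equivalence near zero / infinity.

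The main obstacle I anticipate is the bookkeeping of constants when the normalizing factor $\big(\int_\Omega A(|\nabla v_k|)\,\d x\big)^{1/n}$ in \eqref{eq:sobemb} goes to zero: one has to be careful that, although this makes the denominator small (which is \emph{good}, it amplifies the argument of $A_n$), the inequality still only gives an upper bound by $\int_\Omega A(|\nabla v_k|)\,\d x$, so the correct way to extract ``$\int A_n(|v_k|/\lambda) \to 0$ for every $\lambda$'' is to observe that for $k$ large enough the small normalizer is itself below $\lambda/(\text{const})$ and then monotonicity of $A_n$ finishes it. In effect this is where the hypothesis \eqref{A-inf} (equivalently the failure of \eqref{A-convinf-sigma}) is really used, via $W^{1,A}(\Omega)\hookrightarrow E^{A_n}(\Omega)$ rather than merely $\hookrightarrow L^{A_n}(\Omega)$, guaranteeing that the modular integrals are finite for every $\lambda$ so that the convergence can genuinely be upgraded to the norm.
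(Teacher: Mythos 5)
Your argument is correct and takes a genuinely simpler route than the paper's proof. You apply the Poincar\'e--Sobolev inequality \eqref{eq:sobemb} directly to $v_k/\mu$ where $v_k=u_k-u$ and $\mu$ is the modular constant, obtaining
\[
\int_\Omega A_n\!\left(\frac{|v_k-(v_k)_\Omega|}{c\mu\,\varepsilon_k^{1/n}}\right)\d x\le \varepsilon_k,
\qquad \varepsilon_k:=\int_\Omega A\!\left(\frac{|\nabla v_k|}{\mu}\right)\d x\xrightarrow{k\to\infty}0,
\]
and then, for fixed $\lambda>0$ and $k$ large enough that $c\mu\varepsilon_k^{1/n}\le\lambda$, monotonicity of $A_n$ gives $\int_\Omega A_n(|v_k-(v_k)_\Omega|/\lambda)\,\d x\le\varepsilon_k\to 0$; the average $(v_k)_\Omega\to 0$ because $|\Omega|<\infty$ and $v_k\to 0$ in $L^1$, and $A_n$ is finite-valued and continuous at $0$ under \eqref{A-inf}. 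The paper instead fixes $\lambda$, invokes Vitali's theorem to extract $\delta>0$ with $\sup_k\int_G A(|\nabla v_k|/\lambda_c)\,\d x\le(\lambda/(4c\lambda_c))^n$ on sets $|G|<\delta$, truncates via $g_s$ so that $\nabla g_s(v_k)$ is supported on the small-measure set $\{|v_k|\ge s\}$, and only then applies \eqref{eq:sobemb} to $g_s(v_k)$. Your observation that the vanishing of $\varepsilon_k$ itself amplifies the argument of $A_n$ past any fixed $1/\lambda$ renders the truncation and the Vitali-based equiintegrability estimate unnecessary here, yielding a shorter derivation of the same conclusion; the paper's truncation technique is arguably chosen for structural uniformity with the proof of Lemma~\ref{lem:convED}, where the cancellation near zero is genuinely delicate. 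One small inaccuracy in your write-up: the role of \eqref{A-inf} is simply to ensure $A_n$ is finite-valued and hence continuous at the origin (so that the constant term $|\Omega|A_n(2|(v_k)_\Omega|/\lambda)$ makes sense and vanishes), rather than to invoke the embedding $W^{1,A}(\Omega)\hookrightarrow E^{A_n}(\Omega)$, which is not actually used in your argument.
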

\begin{proof}
    Let $\lambda_c > 0$ be such that
    \begin{equation*}
        \int_{\Omega} A\left( \frac{|u_k - u|}{\lambda_c}\right)\d x \xrightarrow{k \to \infty} 0 \quad \text{and} \quad \int_{\Omega} A\left( \frac{|\nabla u_k - \nabla u|}{\lambda_c}\right)\d x \xrightarrow{k \to \infty} 0\,. 
    \end{equation*}
Owing to the property \eqref{eq:normconv-def}, the conclusion will follow if we show that
    \begin{equation}\label{eq:goal-lem2}
        \int_{\Omega} A_n\left(\frac{|u - u_k|}{\lambda}\right)\d x \xrightarrow{k \to \infty} 0 \quad \text{for every $\lambda >0$.}
    \end{equation}
    Fix any $\lambda >0$. Let
     $c$ be the constant appearing in ~\eqref{eq:sobemb}. By Vitali convergence Theorem,   there exists $\delta > 0$ such that 
    \begin{equation}\label{eq:deltaA}
        \sup_{k} \int_{G} A\left(\frac{|\nabla u - \nabla u_k|}{\lambda_c}\right)\d x \leq \left(\frac{\lambda}{4c\lambda_c}\right)^n,
    \end{equation}
    for every measurable set $G \subset \Omega$ such that $|G| < \delta$.
    Note that $u_k \xrightarrow{k \to \infty} u$ in $L^1(\Omega)$ and in measure. Therefore, {given any $s > 0$},  we have that $|\{|u - u_k| \geq s\}| < \delta$ for sufficiently large $k$. Hence, by~\eqref{eq:deltaA},
    \begin{equation}\label{eq:deltaA2}
        \int_{\{|u - u_k| \geq s\}} A\left( \frac{|\nabla u - \nabla u_k|}{\lambda_c}\right)\d x \leq \left(\frac{\lambda}{4c\lambda_c}\right)^n\,.
    \end{equation}
    Now, observe that
    \begin{equation*}
        \int_{\{|u - u_k| < 2s\}} A_n\left(\frac{|u - u_k|}{\lambda}\right)\d x \leq 
        \int_{\Omega} A_n\left(\frac{\min(2s, |u-u_k|)}{\lambda}\right)\d x\,,
    \end{equation*}
    and the integrand in the last integral can be bounded by $A_n(2s/\lambda)$, independently of $k$. Thus, by Lebesgue Dominated Convergence Theorem,  the right-hand side of the last inequality tends to $0$. Consequently,
    \begin{equation}\label{eq:goal-part1}
        \int_{\{|u - u_k| < 2s\}} A_n\left(\frac{|u - u_k|}{\lambda}\right)\d x \xrightarrow{k \to \infty} 0\,.
    \end{equation}
    Now, define the function $g_s : \R \to \R$ as follows
    \begin{equation}\label{gs}
        g_s(t) = \begin{cases}
            t + s & \quad \text{if $t < -s;$}\\
            0 & \quad \text{if $-s \leq t \leq s;$}\\
            t - s & \quad \text{if $s < t$.}
        \end{cases}
    \end{equation}
    Since $g_s$ is Lipschitz continuous, a standard result in the theory of Sobolev spaces ensures that for any weakly differentiable function $v$ in $\Omega$, one has 
    \begin{equation*}
       |g_s(v)| = \max(0, |v| - s)  \, \quad \text{and} \quad \,\nabla (g_s(v)) = {\chi_{\{|v| \geq s\}}} \nabla v \quad \text{a.e. in $\Omega$.}
    \end{equation*}
    Here, $\chi_Z$ denotes the characteristic function of a set $Z\subset \rn$.
   The following chain holds:
    \begin{align}\label{eq:goal-part2-1}
        &\int_{\{|u - u_k| \geq 2s\}} A_n\left(\frac{|u - u_k|}{\lambda}\right)\d x\\ &\leq \int_{\{|u - u_k| \geq 2s\}} A_n\left(\frac{2(|u - u_k| - s)}{\lambda}\right)\d x \leq \int_{\Omega} A_n\left(\frac{2|g_s(u - u_k)|}{\lambda}\right)\d x \nonumber\\
        &\leq \int_{\Omega} A_n\left(\frac{4|g_s(u - u_k) - \avint_{\Omega} g_s(u - u_k)\d y|}{\lambda}\right)\d x + \int_{\Omega}A_n\left(\frac{4\left|\avint_{\Omega} g_s(u - u_k)\d y\right|}{\lambda}\right)\d x\,.\nonumber
    \end{align}
    Observe that
    \begin{equation*}
        \left| \avint_{\Omega} g_s(u - u_k)\d y \right| \leq \frac{\|u - u_k\|_{L^1(\Omega)}}{|\Omega|} \xrightarrow{k \to \infty} 0\,.
    \end{equation*}
    Therefore,
    \begin{equation}\label{eq:help-dec1}
        \int_{\Omega}A_n\left(\frac{4\left|\avint_{\Omega} g_s(u - u_k)\d y\right|}{\lambda}\right)\d x \xrightarrow{k \to \infty} 0\,.
    \end{equation}
    By~\eqref{eq:deltaA2}, 
    \begin{equation*}
    \int_{\Omega} A\left( \frac{|\nabla(g_s (u - u_k))|}{\lambda_c}\right)\d x = \int_{\{|u - u_k| \geq s\}} A\left( \frac{|\nabla u - \nabla u_k|}{\lambda_c}\right)\d x \leq \left(\frac{\lambda}{4c\lambda_c}\right)^n\,,
    \end{equation*}
   for sufficiently large $k$. 
    Hence, via ~\eqref{eq:sobemb},  we infer that
    \begin{align}\label{eq:mar28-6}
        \int_{\Omega} A_n&\left(\frac{4|g_s(u - u_k) - \avint_{\Omega} g_s(u - u_k)\d y|}{\lambda}\right)\d x \leq \int_{\Omega} A_n\left(\frac{|g_s(u - u_k) - \avint_{\Omega} g_s(u - u_k)\d y|}{c\lambda_c\left(\int_{\Omega} A\left( \frac{|\nabla(g_s (u - u_k))|}{\lambda_c}\right)\d y\right)^{1/n}}\right)\d x\\
        &\leq \int_{\Omega} A\left(\frac{|\nabla(g_s (u-u_k))|}{\lambda_c}\right)\d x \leq \int_{\Omega} A\left(\frac{|\nabla u - \nabla u_k|}{\lambda_c}\right)\d x \xrightarrow{k \to \infty} 0\,.\nonumber
    \end{align}
   Combining equations \eqref{eq:mar28-6}, ~\eqref{eq:goal-part2-1} and~\eqref{eq:help-dec1} implies that
    \begin{equation}\label{apr10}
        \int_{\{|u - u_k| \geq 2s\}} A_n\left(\frac{|u - u_k|}{\lambda}\right)\d x \xrightarrow{k \to \infty} 0\,.
    \end{equation}
    Equation~\eqref{eq:goal-lem2} follows from~\eqref{eq:goal-part1} and ~\eqref{apr10}.
\end{proof}
\begin{lem}\label{lem:convED}
    Let $\Omega$ be an extension domain in $\rn$, with $n\geq 2$, and let $A$ and $\Df$ be Young functions.  Assume that $A$ is non-degenerate and satisfies the condition~\eqref{A-inf},
    \begin{equation}\label{eq:ass-D2}
        \limsup_{t \to 0}\frac{\Df(t/\lambda)}{A(t)} < \infty \quad \text{for every $\lambda >0$\,\,  and there exists $t_1 > 0$ such that\,\,} \Df(t) \leq { A_n(t)} \text{ for $t \geq t_1$.} 
    \end{equation}
    Then,
    \begin{align}
        \label{mar10}
        W^{1, A}  (\Omega) \to E^{\Df}(\Omega)\,.
    \end{align}
    Moreover, 
    if $u \in W^{1, A}(\Omega)$ and $\{u_k\}\subset W^{1, A}(\Omega)$ are such that $u_k \to u$  modularly in $W^{1, A}(\Omega)$, then $$\|u - u_k\|_{L^\Df(\Omega)} \xrightarrow{k \to \infty} 0\,,$$ and 
    \begin{equation}\label{feb30}
   \int_\Omega F\Big(\frac{|u_k|}\lambda\Big) \d x \xrightarrow{k \to \infty} \int_\Omega F\Big(\frac{|u|}\lambda\Big) \d x 
    \end{equation}
    for every $\lambda >0$.
\end{lem}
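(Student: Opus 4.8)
The plan is to transplant everything to the whole space $\rn$ via the extension operator. If $u\in W^{1,A}(\Omega)$, then $\widehat u:=\mathcal E u\in W^{1,A}(\rn)$ by \eqref{feb7}; since $A$ is non-degenerate, $\widehat u\in L^A(\rn)$ forces $|\{|\widehat u|>t\}|<\infty$ for $t>0$, so in fact $\widehat u\in V^{1,A}_0(\rn)$. As $\widehat u=u$ on $\Omega$, the $L^{\Df}$-norm and the relevant modular integrals over $\Omega$ are dominated by those over $\rn$, so it suffices to argue on $\rn$. Two features of $\Df$ will be used, both read off from \eqref{eq:ass-D2}: first, $\Df(t)\le A_n(t)$ for $t\ge t_1$, where $A_n$ is finite-valued by \eqref{A-inf}; second, for every $\lambda'>0$ there is $c_{\lambda'}$ with $\Df(s)\le c_{\lambda'}A(\lambda' s)$ for all small $s$, so that $\Df$ near the origin is dominated by $A$ at an arbitrarily contracted argument.

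For the embedding \eqref{mar10}, take $u\in W^{1,A}(\Omega)$ and $\widehat u\in V^{1,A}_0(\rn)$ as above. By \eqref{embE}, $\widehat u\in E^{A_n}(\rn)$, and $\widehat u\in L^A(\rn)$, say $\int_{\rn}A(|\widehat u|/\mu_0)\,\d x<\infty$. Fix $\lambda>0$, pick $\lambda'\le\lambda/\mu_0$, and choose $\delta\in(0,t_1]$ with $\Df(s)\le c_{\lambda'}A(\lambda' s)$ for $0\le s\le\delta$. Decomposing $\rn$ according to whether $|\widehat u|/\lambda$ lies in $[0,\delta)$, $[\delta,t_1)$ or $[t_1,\infty)$, one estimates $\Df(|\widehat u|/\lambda)$ by $c_{\lambda'}A(|\widehat u|/\mu_0)$ on the first set, by $\max_{[\delta,t_1]}\Df$ on the second --- a set of finite measure since $\widehat u\in L^A(\rn)$ and $A$ is non-degenerate --- and by $A_n(|\widehat u|/\lambda)$ on the third; each of the three integrals is finite, whence $\int_\Omega\Df(|u|/\lambda)\,\d x\le\int_{\rn}\Df(|\widehat u|/\lambda)\,\d x<\infty$. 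Since $\lambda$ was arbitrary, $u\in E^{\Df}(\Omega)$; continuity of the inclusion then follows from the closed graph theorem, or from \eqref{feb4} together with the pointwise bound $\Df(t)\le\widehat A_n(ct)$ for $t\ge0$ and a suitable $c$, itself a consequence of \eqref{eq:ass-D2} and \eqref{Anhat}.

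For the convergence statements, let $u_k\to u$ modularly in $W^{1,A}(\Omega)$ with constant $\lambda_c$, set $v_k=u_k-u$ and $w_k=\mathcal E v_k\in V^{1,A}_0(\rn)$. By \eqref{mod-ext}, $\int_{\rn}A(|w_k|/\lambda_c)\,\d x\to0$ and $\varepsilon_k:=\int_{\rn}A(|\nabla w_k|/\lambda_c)\,\d x\to0$. Applying the Poincar\'e--Sobolev inequality \eqref{sonV1o} to $w_k/\lambda_c$ gives $\int_{\rn}A_n\bigl(|w_k|/(c\lambda_c\varepsilon_k^{1/n})\bigr)\,\d x\le\varepsilon_k$, hence $\|w_k\|_{L^{A_n}(\rn)}\le c\lambda_c\varepsilon_k^{1/n}\to0$ (the degenerate case $\varepsilon_k=0$ forces $w_k\equiv0$ and is trivial), and therefore $\int_{\rn}A_n(|w_k|/\lambda)\,\d x\to0$ for every $\lambda>0$ by \eqref{eq:normconv-def}. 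Now fix $\lambda>0$, pick $\lambda'\le\lambda/\lambda_c$ and $\delta\in(0,t_1]$ with $\Df(s)\le c_{\lambda'}A(\lambda' s)$ for $0\le s\le\delta$, and decompose $\Omega$ as before, using $|v_k|=|w_k|$ a.e. on $\Omega$: on $\{|v_k|/\lambda<\delta\}$ one has $\Df(|v_k|/\lambda)\le c_{\lambda'}A(|v_k|/\lambda_c)$, while $\int_\Omega A(|v_k|/\lambda_c)\,\d x\to0$; on $\{\delta\le|v_k|/\lambda<t_1\}$, $\Df$ is bounded and $|\{|v_k|\ge\lambda\delta\}|\le A_n(\lambda\delta/\lambda_1)^{-1}\int_\Omega A_n(|v_k|/\lambda_1)\,\d x\to0$; on $\{|v_k|/\lambda\ge t_1\}$, $\Df(|v_k|/\lambda)\le A_n(|v_k|/\lambda)$ and $\int_\Omega A_n(|v_k|/\lambda)\,\d x\to0$. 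Adding up, $\int_\Omega\Df(|v_k|/\lambda)\,\d x\to0$ for every $\lambda>0$, that is, $\|u-u_k\|_{L^{\Df}(\Omega)}\to0$. Finally, since $u,u_k\in E^{\Df}(\Omega)$ and $\|u_k-u\|_{L^{\Df}(\Omega)}\to0$, the limit \eqref{feb30} for every $\lambda>0$ is exactly the ``in particular'' assertion of Lemma \ref{int-conv}, applied with $\Df$ in place of $A$.

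The main obstacle is this last convergence when $A\notin\Delta_2$: the modular hypothesis supplies only one constant $\lambda_c$, generally different from the target constant $\lambda$, so one cannot simply read off $L^A$-convergence at scale $\lambda$. This is precisely what the contraction $\lambda'\le\lambda/\lambda_c$ afforded by the near-origin half of \eqref{eq:ass-D2} overcomes, at the cost of the inessential factor $c_{\lambda'}$, the large-scale part of the estimate being handled by the quantitative Orlicz--Sobolev inequality \eqref{sonV1o} after the passage to $\rn$. A further point requiring care is the bookkeeping between $A$, $A_n$ and $\widehat A_n$, which coincide only up to equivalence and only in the appropriate ranges of their arguments.
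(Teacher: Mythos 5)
Your overall strategy --- extend to $\rn$, split into ``small'', ``middle'' and ``large'' regimes of $|u-u_k|/\lambda$, and feed each piece separately into Lemma~\ref{int-conv} at the end --- is the same one the paper uses. However, there is a genuine gap in the way you invoke the Sobolev inequality, and it occurs at the heart of the argument.

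You apply \eqref{sonV1o} to $w_k=\mathcal Ev_k\in V^{1,A}_0(\rn)$, and likewise apply \eqref{embE} to $\widehat u$, \emph{with the original function $A$}. Both of these are only stated under the assumption \eqref{A-0}, and the lemma does not assume \eqref{A-0}; it only assumes \eqref{A-inf}. Outside \eqref{A-0} the integral $\int_0(t/A(t))^{1/(n-1)}\d t$ diverges, $H_n\equiv\infty$ on $(0,\infty)$, and the $A_n$ literally associated with $A$ collapses, so \eqref{sonV1o} becomes vacuous. The $A_n$ in hypothesis \eqref{eq:ass-D2}, as the paper makes clear, is the one built from a modified $\widehat A$ that does satisfy \eqref{A-0}; but then to run \eqref{sonV1o} you must also modularize $\nabla w_k$ against $\widehat A$, not $A$, and the modular hypothesis only controls $\int_{\rn}A(|\nabla w_k|/\lambda_c)\,\d x$. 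Since $\widehat A$ dominates $A$ near zero (this is precisely the modification), one can have $\int_{\rn}\widehat A(|\nabla w_k|/\lambda_c)\,\d x=\infty$ while $\int_{\rn}A(|\nabla w_k|/\lambda_c)\,\d x<\infty$: the best pointwise bound $\widehat A(t)\le A(t)+A(t_0)$ picks up the additive constant $A(t_0)$, which, integrated over the possibly infinite-measure set $\{\nabla w_k\neq0\}$, blows up. This is why the paper's proof does \emph{not} apply the Sobolev inequality to $u-u_k$ directly: it first replaces $u-u_k$ by the truncation $g_s(u-u_k)$ with $g_s$ as in \eqref{gs} and $s=t_1\lambda/2$. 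Then $\nabla g_s(u-u_k)$ is supported on $\{|u-u_k|\ge s\}$, a set of finite measure (tending to zero), so the estimate $\int_{\rn}\widehat A(|\nabla g_s(u-u_k)|/\lambda_c)\,\d x\le |\{|u-u_k|\ge s\}|\,A(t_0)+\int_{\rn}A(|\nabla(u-u_k)|/\lambda_c)\,\d x$ survives, and \eqref{sonV1o} can be applied to $\widehat A$, $\widehat A_n$, and $g_s(u-u_k)$. Your middle-range/large-range split does not substitute for this truncation, because the object you feed into the Sobolev inequality is still the untruncated $w_k$. The same issue undermines your use of \eqref{embE} for the embedding part. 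You flag the ``bookkeeping between $A$, $A_n$ and $\widehat A_n$'' at the end, but this is not mere bookkeeping: without the truncation, the passage from the $A$-modular of $\nabla w_k$ to the $\widehat A$-modular of $\nabla w_k$ fails, and the chain breaks.

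A secondary, minor difference: the paper proves the convergence assertions first and then derives the embedding \eqref{mar10} by applying that convergence to the truncations $u_k=\min\{k,\max\{-k,u\}\}$, rather than proving the embedding directly; your order is reversed, which would be fine if the ingredient you invoke (\eqref{embE} on $\rn$) were available.
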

\begin{proof} Since $\Omega$ is an extension domain, there exists a linear extension operator $\mathcal{E}$ satisfying the property \eqref{mod-ext}. Let us still denote by $u$ and $u_k$
the functions $\mathcal{E}u$ and $\mathcal{E} u_k$. Thanks to \eqref{mod-ext}, we have that $u_k\to u$ modularly in $W^{1, A}(\rn)$. Pick any $\lambda_c > 0$ such that 
\begin{align}
    \label{mar1}
    \int_{\rn} A(|u_k - u|/\lambda_c) \xrightarrow{k \to \infty} 0 \quad \text{and} \quad \int_{\rn} A(|\nabla u_k - \nabla u|/\lambda_c) \xrightarrow{k \to \infty} 0.
\end{align}
 Let us fix $\lambda > 0$. Note that the limsup in ~\eqref{eq:ass-D2} implies the existence of a constant $c_d$ such that  $\Df(t/\lambda) \leq c_dA(t/\lambda_c)$ if $t/\lambda \leq t_1$. Therefore,
    \begin{equation}\label{eq:help2}
        \int_{\{|u - u_k| \leq t_1\lambda\}} \Df\left( \frac{|u - u_k|}{\lambda}\right)\d x \leq c_d\int_{\{|u - u_k| \leq t_1\lambda\}} A\left( \frac{|u - u_k|}{\lambda_c}\right)\d x \xrightarrow{k \to \infty} 0\,.
    \end{equation}
Now, let $\widehat A$ and $\widehat A_n$ be as in \eqref{Anhat}. {By replacing, if necessary, $F$ with an equivalent Young function, we may assume that
 $F(t) \leq \widehat A_n(t)$ for $t \geq t_1$.} Set $s =t_1\lambda/2$ and let $g_s$ be the function defined as in   \eqref{gs}.
    Observe that
    \begin{align}\label{eq:help23-10-3}
        &\int_{\{|u - u_k| \geq t_1\lambda\}} \Df\left( \frac{|u - u_k|}{\lambda}\right)\d x \leq \int_{\{|u - u_k| \geq t_1\lambda\}} \widehat A_n\left( \frac{|u - u_k|}{\lambda}\right)\d x \\ \nonumber &\leq \int_{\{|u - u_k| \geq t_1\lambda\}} \widehat A_n\left( \frac{2(|u - u_k|-s)}{\lambda}\right)\d x \leq \int_{\rn} \widehat A_n\left( \frac{2|g_s(u - u_k)|}{\lambda}\right)\d x.
    \end{align}
    Choose $t_0 > 0$ such that $A(t) = \widehat A(t)$ for all $t \geq t_0$. Thereby, $\widehat A(t) \leq A(t) + A(t_0)$ for  $t \geq 0$. Hence,
    \begin{align}\label{eq:help23-10}
        &\int_{\rn} \widehat A \left( \frac{|\nabla g_s(u - u_k)|}{\lambda_c} \right)\d x = \int_{\{|u - u_k| \geq s\}} \widehat A \left( \frac{|\nabla g_s(u - u_k)|}{\lambda_c} \right)\d x \\ \nonumber 
        &\leq \int_{\{|u - u_k| \geq s\}} A(t_0)\d x + \int_{\{|u - u_k| \geq s\}} A \left( \frac{|\nabla g_s(u - u_k)|}{\lambda_c} \right)\d x \\ \nonumber 
        &\leq |\{|u - u_k| \geq s\}|A(t_0) + \int_{\rn} A \left(\frac{|\nabla u - \nabla u_k|}{\lambda_c}\right)\d x \xrightarrow{k \to \infty} 0\,,
    \end{align}
    where the last limit follows from convergence of $u_k$ to $u$ in measure and \eqref{mar1}. Owing to~\eqref{eq:help23-10},  one has  that
    \begin{equation}\label{eq:help23-10-2}
        \int_{\rn} \widehat A \left( \frac{|\nabla g_s(u - u_k)|}{\lambda_c} \right)\d x \leq \left( \frac{\lambda}{2C\lambda_c} \right)^n
    \end{equation}
    for sufficiently large $k$,
    where  $c$ is the constant appearing in~\eqref{sonV1o}. Coupling the inequality~\eqref{sonV1o} with~\eqref{eq:help23-10-2} yields
    \begin{align*}
    \int_{\rn} \widehat A_n\left( \frac{2|g_s(u - u_k)|}{\lambda}\right)\d x &\leq \int_{\rn} \widehat A_n\left( \frac{|g_s(u - u_k)|}{C\lambda_c \left( \int_{\rn} \widehat A(|\nabla g_s(u - u_k)|/\lambda_c) \right)^{1/n}}\right)\d x\\
    &\leq  \int_{\rn} \widehat A \left( \frac{|\nabla g_s(u - u_k)|}{\lambda_c} \right)\d x \xrightarrow{k \to \infty} 0,
    \end{align*}
    whence, by~\eqref{eq:help23-10-3}, we deduce that
    \begin{equation*}
        \int_{\{|u - u_k| \geq t_1\lambda\}} \Df\left( \frac{|u - u_k|}{\lambda}\right)\d x \xrightarrow{k \to \infty} 0\,.
    \end{equation*}
    The latter limit combined  with~\eqref{eq:help2} tells us that $\int_{\rn} \Df\left( \frac{|u - u_k|}{\lambda}\right)\d x \xrightarrow{k \to \infty} 0$.    Hence, 
    \begin{align}
        \label{mar2}
        \int_{\Omega} \Df\left( \frac{|u - u_k|}{\lambda}\right)\d x \xrightarrow{k \to \infty} 0 \quad \text{for every $\lambda >0$.}
    \end{align}
   By the arbitrariness of $\lambda$, the property \eqref{eq:normconv-def} ensures that
    $\|u - u_k\|_{L^\Df(\Omega)}\to 0$.
    \\ The limit \eqref{feb30} now follows from \eqref{mar2}, via
    Lemma \ref{int-conv}.
    \\  Finally, consider any $u \in W^{1, A}(\Omega)$ and $\lambda > 0$, and set $u_k = \min\{k, \max\{-k, u\}\}$ for $k \in \N$. By equation ~\eqref{mar2}, there exists $k_0$ such that $\int_{\Omega} F(2|u-u_{k_0}|/\lambda) < \infty$. Moreover, the assumption~\eqref{eq:ass-D2} ensures that there exists a constant $c = c(k_0, \lambda, u) > 0$ such that  $F(2t/\lambda) \leq cA(t/\|u\|_{L^A(\Omega)})$ for $t \leq k_0$. Therefore
    \begin{align*}
        \int_{\Omega} F \left( \frac{|u|}{\lambda} \right)\d x &\leq \int_{\Omega} F \left(\frac{2|u-u_{k_0}|}{\lambda} \right)\d x + \int_{\Omega} F \left( \frac{2\min(|u|, k_0)}{\lambda} \right)\\
        &\leq \int_{\Omega} F \left(\frac{2|u-u_{k_0}|}{\lambda} \right)\d x + c\int_{\Omega} A \left( \frac{|u|}{\|u\|_{L^A(\Omega)}} \right) < \infty\,.
    \end{align*}
    By the arbitrariness of $\lambda$, the embedding ~\eqref{mar10} follows.
\end{proof}
We conclude this subsection with a proof of the inequalities \eqref{1demb0}, \eqref{1demb0inf}, and \eqref{1demb}.

{
\begin{lem}
    \label{lemma-1d}  Let $\Omega$ be an open set in $\R$ and let $A$ be a Young function. 
    \begin{enumerate}[i)]
        \item 
        Assume that $|\Omega|<\infty$. 
       Then, the inequality \eqref{1demb0} holds.
        \item
          Assume that $A(t) \simeq t$ near $0$. Then, the inequality \eqref{1demb0inf} holds.
        \item Assume that $\Omega$ is an interval. Then, the inequality~\eqref{1demb} holds.
        \end{enumerate}
\end{lem}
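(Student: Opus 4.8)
All three inequalities rest on the one-dimensional fundamental theorem of calculus, and differ only in how the boundary (or decay) information carried by the relevant space is exploited. \emph{Part (i).} The inequality is trivial when its right-hand side is infinite, so I would assume $\int_\Omega A(|u'|)\,\d x<\infty$. Jensen's inequality for the convex function $A$ on the probability space $(\Omega,\d x/|\Omega|)$ gives $A\big(\tfrac1{|\Omega|}\int_\Omega|u'|\,\d x\big)\le\tfrac1{|\Omega|}\int_\Omega A(|u'|)\,\d x$; applying the nondecreasing map $A^{-1}$ and using $A^{-1}(A(s))\ge s$ then yields
\[
\int_\Omega|u'|\,\d x\le|\Omega|\,A^{-1}\Big(\tfrac1{|\Omega|}\int_\Omega A(|u'|)\,\d x\Big).
\]
In particular $\widehat u'\in L^1(\R)$, since $\widehat u'=0$ a.e.\ outside $\Omega$. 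Now $\widehat u\in W^{1,1}_{loc}(\R)$ has a locally absolutely continuous representative, so $\widehat u(x)-\widehat u(y)=\int_y^x\widehat u'\,\d t$ for $y<x$; letting $y\to-\infty$ the right-hand side converges absolutely, so $\widehat u(y)$ has a limit $\ell$, and the condition $|\{|\widehat u|>t\}|<\infty$ for all $t>0$ forces $\ell=0$. Hence $\widehat u(x)=\int_{-\infty}^x\widehat u'\,\d t$ and $|\widehat u(x)|\le\int_\R|\widehat u'|\,\d t=\int_\Omega|u'|\,\d x$ for every $x$; combining this with the displayed estimate proves \eqref{1demb0}.

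\emph{Part (ii).} From $A(t)\simeq t$ near $0$ there is $c_1\ge1$ with $A(t)\ge t/c_1$ for small $t$; since $A(t)/t$ is nondecreasing by \eqref{incr}, this self-improves to a global lower bound $A(t)\ge t/c_1$ for all $t\ge0$, i.e.\ $|s|\le c_1A(|s|)$. Assuming, as we may, that $\int_\Omega A(|u'|)\,\d x<\infty$, we obtain $\|\widehat u'\|_{L^1(\R)}\le c_1\int_\Omega A(|u'|)\,\d x$, and the decay argument of Part (i) (which used only $\widehat u'\in L^1(\R)$ and the finite-measure superlevel sets) gives $\|u\|_{L^\infty(\Omega)}=\sup_x|\widehat u(x)|\le\|\widehat u'\|_{L^1(\R)}$. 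This yields \eqref{1demb0inf} with $c=c_1=c(A)$.

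\emph{Part (iii).} Here $u$ has a locally absolutely continuous representative, since $W^{1,A}(\Omega)\subset W^{1,1}_{loc}(\Omega)$. I would use the elementary pointwise bound $|v|\le\frac{t_1}{A(t_1)}A(|v|)+t_1$, valid for any $t_1$ with $A(t_1)>0$, which after the usual homogeneity normalization gives $\|v\|_{L^1(I)}\le c(A,|I|)\|v\|_{L^A(I)}$ for every interval $I$ of finite measure, with $c(A,1)=:c(A)$ depending on $A$ alone. If $\Omega=(a,b)$ is bounded, averaging $|u(x)|\le|u(y)|+\int_\Omega|u'|\,\d t$ over $y\in\Omega$ gives $|u(x)|\le\frac1{|\Omega|}\|u\|_{L^1(\Omega)}+\|u'\|_{L^1(\Omega)}\le c(A,\Omega)\|u\|_{W^{1,A}(\Omega)}$. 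If $\Omega$ is unbounded, for each $x\in\Omega$ there is a closed unit interval $J_x\subset\overline\Omega$ with $x\in J_x$; averaging $|u(x)|\le|u(y)|+\int_{J_x}|u'|\,\d t$ over $y\in J_x$ and using the measure-one embedding estimate for $u$ and $u'$ on $J_x$ gives $|u(x)|\le c(A)\big(\|u\|_{L^A(\Omega)}+\|u'\|_{L^A(\Omega)}\big)$ with $c(A)$ independent of $\Omega$. Taking the supremum over $x\in\Omega$ yields \eqref{1demb}.

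The only points requiring a little care are the self-improvement of $A(t)\simeq t$ near zero to the global lower bound $A(t)\ge t/c_1$, and, in Part (iii), the localization to a subinterval of \emph{fixed} length so that the embedding constant depends on $A$ only (and not on $\Omega$) in the unbounded case; this is precisely why the finite-measure and unbounded cases must be treated by the two slightly different arguments reflected in the statement.
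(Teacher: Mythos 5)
Your proof is correct, and for Parts (i), (ii), and the bounded case of Part (iii) it follows essentially the same route as the paper: Jensen's inequality combined with the one-dimensional fundamental theorem of calculus and the pointwise bound $|u(x)| \le \int_\Omega |u'|\,\d y$. You are slightly more explicit than the paper in two places where it is silent: the justification that $\widehat u$ tends to $0$ at $\pm\infty$ (the paper simply writes equation \eqref{apr15}), and the upgrade of the lower bound $A(t)\gtrsim t$ near zero to a global one via the monotonicity of $A(t)/t$. In the bounded case of Part (iii) you substitute the pointwise estimate $|v|\le\frac{t_1}{A(t_1)}A(|v|)+t_1$ for the paper's Jensen argument to get the $L^A\hookrightarrow L^1$ estimate; this is only a cosmetic difference.

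The genuinely different step is Part (iii) for unbounded $\Omega$. The paper first normalizes so that $A(1)=1$ (producing an equivalent norm), truncates $u$ by the Lipschitz map $g_1$, shows that $S=\{|u|>1\}$ has measure at most $1$, applies Part (i) to $g_1(u)\in V^{1,A}_0(S)$ to get $\|g_1(u)\|_\infty\le 1$, concludes $\|u\|_\infty\le 2$, and then handles half-lines separately via even reflection to $\R$. You instead localize each point $x$ to a closed unit interval $J_x\subset\overline\Omega$, average the fundamental-theorem estimate over $J_x$, and invoke the fixed-measure embedding $L^A(J_x)\hookrightarrow L^1(J_x)$ with a constant depending on $A$ alone. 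Your route avoids the normalization, the truncation, and the separate reflection argument for half-lines, and treats the whole unbounded case uniformly; the paper's route showcases a reuse of Part (i) and yields an explicit constant after normalization. Both are valid and give a constant $c=c(A)$ independent of the unbounded interval.
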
}
\begin{proof} 
{ Part $(i)$. Given $u\in V^{1, A}_0(\Omega)$, we have that
    \begin{align}
        \label{apr15}
        |u(x)| \leq \int_\Omega |u'|\d y \quad \text{for $x\in \Omega\,$.}
    \end{align}
   Thus, by Jensen's inequality,
    \begin{align}
        \label{apr16}
        A\bigg(\frac{|u(x)|}{|\Omega|}\bigg) \leq A\bigg(\frac{1}{|\Omega|}\int_\Omega |u'|\d y \bigg) \leq \frac{1}{|\Omega|}\int_\Omega A(|u'|)\d y 
        \quad \text{for $x\in \Omega$,}
    \end{align}
    whence \eqref{1demb0} follows.
\\ Part $(ii)$. Our assumption on $A$ ensures that there exists a constant $c$ such that 
 $c A(t) \geq t$ for  $t \geq 0$. Therefore,
    \begin{equation*}
         |u(x)| \leq \int_\Omega |u'|\, \d y \leq
         c\int_\Omega A(|u'|)\, \d y
         \quad  \text{for $x\in \Omega$,}
    \end{equation*}
    namely \eqref{1demb0inf}.}
\\  Part $(iii)$. First, assume that $\Omega$  is bounded.  Let $u$ be such that $\|u\|_{W^{1, A}(\Omega)} \leq 1$. A counterpart of~\eqref{apr15} and Jensen's inequality again imply that
     \begin{align}
        \label{apr17}
        |u(x)| & \leq \inf |u| + \int_\Omega |u'|\d y \leq \frac{1}{|\Omega|}\int_\Omega |u|\d y + \int_\Omega |u'|\d y 
        \\
        & \nonumber \leq A^{-1}\bigg(\frac{1}{|\Omega|}\int_\Omega A(|u|)\d y\bigg) + |\Omega|A^{-1}\bigg(\frac{1}{|\Omega|}\int_\Omega A(|u'|)\d y\bigg)\\
        &\leq A^{-1}(1/|\Omega|) + |\Omega|A^{-1}(1/|\Omega|)   \quad  \text{for $x\in \Omega\,$.}
    \end{align}
   An application  of the inequality ~\eqref{apr17} to the function $\frac{u}{\|u\|_{W^{1, A}(\Omega)}}$ yields \eqref{1demb} for an arbitrary function $u \in W^{1, A}(\Omega)$. 
    \\ Next, if $\Omega$ is unbounded, then it is either $\R$ or a half-line. Assume first that $\Omega =\mathbb R$.
     Suppose, for the time being, that 
    \begin{align}
        \label{apr23}
        A(1)=1\,.
    \end{align} Hence, $A(t)\geq t$ for $t\geq 1$.
    Let $u\in W^{1,A}(\mathbb R)$ be such that $\|u\|_{W^{1,A}(\mathbb R)} = 1$. Denote by $g$ the function defined as in \eqref{gs}, with $s=1$. Set $S = \{|u|>1\}$. Then, $S$ is an open set with $|S|< \infty$ and 
$g(u) \in V^{1, A}_0(S)$. From the inequality \eqref{1demb0} we deduce that
\begin{align}
    \label{apr19}
\|g(u)\|_{L^\infty(\Omega)} \leq |S|A^{-1}\bigg(\frac 1{|S|}\int_S A(|g(u)'|)\d x\bigg)  \leq |S|A^{-1}\bigg(\frac 1{|S|}\int_\Omega A(|u'|)\d x\bigg)\,. 
\end{align}
On the other hand,
\begin{align}
    \label{apr20}
    |S| \leq \int_S |u|\d y \leq \int_S A(|u|)\d y  \leq 1.
\end{align}
Owing to the property \eqref{incr}, we deduce from \eqref{apr19} and \eqref{apr20} that
\begin{align}
    \label{apr21}
\|g(u)\|_{L^\infty(\Omega)} \leq A^{-1}\bigg( \int_\Omega A(|u'|)\d x\bigg) \leq A^{-1}(1)=1.
\end{align}
Hence,
\begin{align}
    \label{apr22}
    \|u\|_{L^\infty(\mathbb R)}\leq \|(|u|-1)\chi_S\|_{L^\infty(\mathbb R)}+ \|1\|_{L^\infty(\mathbb R)}= 
    \|g(u)\|_{L^\infty(\mathbb R)}+ \|1\|_{L^\infty(\mathbb R)} \leq 2.
\end{align}
The inequality \eqref{1demb} thus holds with $c=2$. 
\\ When $A$ is an arbitrary Young function, then by scaling and normalization, one can produce an equivalent Young function satisfying \eqref{apr23}. This results in an equivalent norm in $W^{1,A}(\mathbb R)$, whence \eqref{1demb}
holds also in this case, with a constant depending on $A$.
\\ Finally, if $\Omega$ is a half-line, then the extension operator to $\mathbb R$ of any function  $u \in W^{1,A}(\Omega)$ by even reflection about the endpoint of $\Omega$ maps $u$ into a function $\overline u \in W^{1,A}(\mathbb R)$. Moreover, $\|\overline u\|_{W^{1,A}(\mathbb R)}\leq 2 \|u\|_{W^{1,A}(\Omega)}$. Hence, the inequality \eqref{1demb}
 follows from its version in $\mathbb R$.   
\end{proof}

\subsection{Anisotropic spaces}

Recall that, as mentioned in the previous sections,
if the condition ~\eqref{A-0-aniso} is not  assumed, then the function  $\Phi_n$, and hence $\vartheta$ given by \eqref{theta}, are defined with $\Phi$ modified near zero, if necessary, so that ~\eqref{A-0-aniso} is fulfilled. The specific choice of this replacement does not affect the assumptions of the statements below.

\begin{lem}\label{lem:Phi-Psi}
   Let $n\geq 2$, let $\Psi, \Phi : \Rn \to [0, \infty)$ be $n$-dimensional Young functions, and let 
   $\Cf : [0, \infty) \to [0, \infty)$ be a non-decreasing function.
   Assume that $\Phi$ satisfies the condition ~\eqref{Phi-inf} and
    \begin{equation}\label{eq:jan11}
        \Psi(\xi) \leq c + \Phi\left(\frac{\xi}{\Cf(\vartheta(\xi))}\right) \quad \text{for $\xi \in \Rn$, }
    \end{equation}
    for some constant $c$, where $\vartheta$ is the function defined by 
    ~\eqref{theta}. Then, 
    \begin{equation}\label{eq:goal-3}
        \Psi(\Cf(s)\xi) \leq c + \Phi_n(s) + \Phi (\xi)\quad \text{for $s\geq 0$ and $\xi \in \Rn$.}
    \end{equation}
\end{lem}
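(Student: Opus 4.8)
The plan is to exploit the implicit definition \eqref{theta} of $\vartheta$, which renders \eqref{eq:goal-3} almost immediate through a two-case argument, unlike the isotropic Lemma~\ref{lem:lem1}, where an O'Neil-type manipulation of generalized inverses was required. First I would record that $c\geq 0$: evaluating \eqref{eq:jan11} at $\xi=0$ and using $\vartheta(0)=0$ together with $\Phi(0)=\Psi(0)=0$ gives $0\leq c$. In particular, if $\Cf(s)=0$ then $\Psi(\Cf(s)\xi)=\Psi(0)=0\leq c+\Phi_n(s)+\Phi(\xi)$, so \eqref{eq:goal-3} holds; hence we may assume $\Cf(s)>0$ from now on.

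Fix such an $s$ and $\xi\in\Rn$, and put $\eta=\Cf(s)\xi$, so that $\xi=\eta/\Cf(s)$ and \eqref{eq:goal-3} becomes $\Psi(\eta)\leq c+\Phi_n(s)+\Phi(\eta/\Cf(s))$. Combining the hypothesis \eqref{eq:jan11} with the defining equation \eqref{theta} of $\vartheta$ yields
\begin{equation}
    \Psi(\eta)\leq c+\Phi\Big(\frac{\eta}{\Cf(\vartheta(\eta))}\Big)=c+\Phi_n(\vartheta(\eta)),
\end{equation}
so it is enough to show that $\Phi_n(\vartheta(\eta))\leq \Phi_n(s)+\Phi(\eta/\Cf(s))$. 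I would prove this by distinguishing the two cases $\vartheta(\eta)\leq s$ and $\vartheta(\eta)>s$.

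In the first case, monotonicity of $\Phi_n$ gives $\Phi_n(\vartheta(\eta))\leq \Phi_n(s)$, and we conclude upon discarding the nonnegative term $\Phi(\eta/\Cf(s))$. In the second case, monotonicity of $\Cf$ yields $\Cf(\vartheta(\eta))\geq \Cf(s)>0$, so $\lambda:=\Cf(s)/\Cf(\vartheta(\eta))\in(0,1]$ and $\eta/\Cf(\vartheta(\eta))=\lambda\,(\eta/\Cf(s))$; since $\Phi$ is convex with $\Phi(0)=0$, one has $\Phi(\lambda\zeta)\leq \lambda\Phi(\zeta)\leq \Phi(\zeta)$ for all $\zeta\in\Rn$, whence $\Phi_n(\vartheta(\eta))=\Phi(\eta/\Cf(\vartheta(\eta)))\leq \Phi(\eta/\Cf(s))$, and we conclude upon discarding the nonnegative term $\Phi_n(s)$. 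Putting the two cases together gives $\Psi(\eta)\leq c+\Phi_n(s)+\Phi(\eta/\Cf(s))$, i.e.\ \eqref{eq:goal-3}. I do not anticipate a genuine obstacle here; the only point requiring a little care is the bookkeeping connected with $\Cf$ possibly vanishing near $0$ and the conventions this forces on $\vartheta$, but since $\Phi_n$ is finite-valued under \eqref{Phi-inf} one checks that $\Cf(\vartheta(\eta))>0$ at every nonzero $\eta$, so the argument above applies verbatim.
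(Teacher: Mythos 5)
Your proof is correct and takes essentially the same approach as the paper's: a two-case split on whether $\vartheta$ exceeds $s$ or not, combined with the monotonicity of $\Phi_n$, the monotonicity of $\Cf$, and the scaling property $\Phi(\lambda\zeta)\leq\Phi(\zeta)$ for $\lambda\in[0,1]$. The only difference is organizational -- you change variables $\eta=\Cf(s)\xi$ at the outset and apply \eqref{theta} once to reduce the claim to $\Phi_n(\vartheta(\eta))\leq\Phi_n(s)+\Phi(\eta/\Cf(s))$, whereas the paper first proves $\Psi(\xi)\leq c+\Phi_n(s)+\Phi(\xi/\Cf(s))$ and then substitutes; the added bookkeeping on $\Cf(s)=0$ and $\Cf(\vartheta(\eta))>0$ is a nice touch the paper elides.
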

\begin{proof}
    Since the function $\Phi_n$ is increasing, and for each $\xi$, the function $s \mapsto \Phi\Big(\frac{\xi}{\Cf(s)}\Big)$ is non-increasing, by the assumption~\eqref{eq:jan11}, we have that
\begin{align}
    \label{jan16}
    \Psi (\xi) \leq c + \Phi\Big(\frac{\xi}{\Cf(\vartheta(\xi))}\Big) \leq c + \Phi\Big(\frac{\xi}{\Cf(s)}\Big) \leq  c + \Phi_n(s)+\Phi\Big(\frac{\xi}{\Cf(s)}\Big) \quad \text{if $s\leq \vartheta(\xi)$,}
\end{align}
and
\begin{align}
\label{jan17}
    \Psi (\xi) \leq c + \Phi_n(\vartheta(\xi)) \leq c + \Phi_n(s) \leq c + \Phi_n(s) +\Phi\Big(\frac{\xi}{\Cf(s)}\Big) \quad \text{if $s\geq \vartheta(\xi)$.}
\end{align}
From~\eqref{jan16} and~\eqref{jan17} we deduce that
\begin{equation*}
    \Psi(\xi) \leq c + \Phi_n(s) + \Phi\left(\frac{\xi}{\Cf(s)}\right)\quad \text{for $s\geq 0$ and $\xi \in \Rn$.}
\end{equation*}
whence ~\eqref{eq:goal-3} follows.
\end{proof}
{
\begin{lem}
    \label{recover}
Let $n\geq 2$. Assume that $\Phi(\xi)=A(|\xi|)$ and $\Psi (\xi)= B(|\xi|)$ for some Young functions $A$ and $B$, with $A$ satisfying~\eqref{A-0} and~\eqref{A-inf}. {Let $\Cf$ be as in \eqref{jan50}.}
Then the condition \eqref{eq:assumpt1} for $\Phi$ and $\Psi$ implies the condition \eqref{eq:inq-ass2} {for $B$ and a Young function equivalent to $A$.}
 \end{lem}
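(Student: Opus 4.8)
The plan is to reduce the statement to the scalar isotropic setting, where the auxiliary function $\vartheta$ of \eqref{theta} can be evaluated explicitly along a convenient curve, and then to absorb the additive constant of \eqref{eq:assumpt1} into an equivalent Young function.

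First I would record the basic identifications. When $\Phi(\xi)=A(|\xi|)$ one has $\Phi_\circ=A$, hence $H_\Phi=H_n$ and $\Phi_n=A_n$, the Sobolev conjugate of $A$ defined as in \eqref{sobconj}--\eqref{Hn}. Under \eqref{A-0} the function $A$ is non-degenerate and the integrand in \eqref{Hn} is locally integrable; together with \eqref{A-inf} and the finiteness of $A$ this makes $H_n\colon[0,\infty)\to[0,\infty)$ a continuous strictly increasing bijection, so that $H_n^{-1}$ is a genuine inverse and $A_n(H_n(t))=A(t)$ for every $t\ge 0$. Since also $\Psi(\xi)=B(|\xi|)$, the defining relation \eqref{theta} becomes $A_n(\vartheta(\xi))=A\big(|\xi|/\Cf(\vartheta(\xi))\big)$ and depends on $\xi$ only through $|\xi|$; writing $\vartheta(\xi)=\theta(|\xi|)$, assumption \eqref{eq:assumpt1} takes the scalar form $B(r)\le c+A\big(r/\Cf(\theta(r))\big)=c+A_n(\theta(r))$ for every $r\ge 0$.

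The core step is the identity $\theta\big(t\,\Cf(H_n(t))\big)=H_n(t)$ for every $t>0$. If $\Cf(H_n(t))=0$ this is immaterial, since the corresponding instance of \eqref{eq:inq-ass2} is just $B(0)=0\le A(t)$; otherwise $s=H_n(t)$ solves $A_n(s)=A(t)=A\big(t\,\Cf(H_n(t))/\Cf(s)\big)$, and the uniqueness of the solution of \eqref{theta}, recalled immediately after that equation, forces $\theta\big(t\,\Cf(H_n(t))\big)=H_n(t)$. Substituting $r=t\,\Cf(H_n(t))$ into the scalar form of \eqref{eq:assumpt1} and using $A_n(H_n(t))=A(t)$ yields $B\big(t\,\Cf(H_n(t))\big)\le c+A(t)$ for all $t>0$.

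It remains to dispose of the constant $c$. Since $A$ is finite-valued by \eqref{A-inf} and non-degenerate, $A(t)\to\infty$, so there is $t_0>0$ with $A(t)\ge 1$, hence $c+A(t)\le(1+c)A(t)$, for $t\ge t_0$. Set $\widetilde A:=(1+c)A$. Then $\widetilde A$ is a Young function with $\widetilde A\simeq A$ (indeed $A(t/(1+c))\le A(t)\le(1+c)A(t)\le A((1+c)t)$, the last step by \eqref{incr}), and, since $\widetilde A\ge A$, its associated function $\widetilde H_n$ obeys $\widetilde H_n\le H_n$; as $\Cf$ is non-decreasing, $B\big(t\,\Cf(\widetilde H_n(t))\big)\le B\big(t\,\Cf(H_n(t))\big)\le c+A(t)\le\widetilde A(t)$ for $t\ge t_0$, which is exactly \eqref{eq:inq-ass2} for $B$ and the Young function $\widetilde A$ equivalent to $A$. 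The only point requiring care is the bijectivity of $H_n$, which gives the exact equality $A_n(H_n(t))=A(t)$ rather than merely the one-sided bounds valid for generalized inverses; this is precisely where both \eqref{A-0} and \eqref{A-inf} enter, and everything else is routine.
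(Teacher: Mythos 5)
Your proof is correct and takes essentially the same route as the paper's: you identify $\Phi_\circ=A$, $\Phi_n=A_n$, use the injectivity of $A$ (equivalently the bijectivity of $H_n$, available under \eqref{A-0} and \eqref{A-inf}) to pin down $\vartheta$, and substitute $r=t\,\Cf(H_n(t))$ to obtain $B(t\,\Cf(H_n(t)))\le c+A(t)$ — your pointwise evaluation of $\vartheta$ via uniqueness of the solution to \eqref{theta} is the same computation as the paper's global inversion of $\widehat\vartheta$. The one place where you go beyond the paper is the final absorption of the constant $c$: the paper stops at ``there exist $c'$, $t_0$ such that $B(t\,\Cf(H_n(t)))\le A(c't)$ for $t\ge t_0$'' and declares the conclusion to follow, whereas you explicitly produce $\widetilde A=(1+c)A\simeq A$, verify $\widetilde H_n\le H_n$, and hence derive \eqref{eq:inq-ass2} with $\widetilde H_n$ rather than $H_n$ in the argument of $\Cf$ — a detail the paper glosses over and which your treatment closes cleanly.
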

\begin{proof}
 {For functions $\Phi$ and $\Psi$ as in the statement, the function $\vartheta (\xi)$ only depends on $|\xi|$. Let $\widehat \vartheta : [0, \infty) \to [0, \infty)$ be the function such that $\vartheta (\xi)= \widehat \vartheta (|\xi|)$ for $\xi \in \rn$.}
 Equation \eqref{theta} then takes the form
\begin{equation}  \label{thetaA} 
      A_n(\widehat\vartheta(t)) = A\left(\frac{t}{\Cf(\widehat\vartheta(t))}\right) \quad \text{ for $t \geq 0$.}
\end{equation}
Namely, 
$$A(H_n^{-1}(\widehat\vartheta (t)))=A\left(\frac{t}{\Cf(\widehat\vartheta(t))}\right),$$
and hence
\begin{align}
    \label{mar40}
    H_n^{-1}(\widehat\vartheta (t))\Cf(\widehat\vartheta(t))=t  \quad \text{ for $t \geq 0$.}
\end{align}
The latter identity ensures that $\widehat\vartheta$  is strictly monotone and invertible, and 
\begin{align} \label{mar41}
    \widehat\vartheta ^{-1}(s) =  H_n^{-1}(s)E(s) \quad \text{ for $s \geq 0$.}
\end{align}
{Now, the condition \eqref{eq:assumpt1} is equivalent to requiring that
 \begin{align*}
     B(t) \leq c + A\left(\frac{t}{\Cf(\widehat\vartheta(t))}\right) \quad \text{ for $t \geq 0$,}
 \end{align*}
 and the latter is in turn equivalent to
  \begin{align}
     \label{mar42}
      B(\widehat\vartheta ^{-1}(s)) \leq c+A\left(\frac{\widehat\vartheta ^{-1}(s)}{\Cf(s)}\right) \quad \text{ for $s \geq 0$,}
 \end{align}
 Coupling \eqref{mar41} with \eqref{mar42} yields:
  \begin{align}
     \label{mar43}
    B(H_n^{-1}(s)E(s)) \leq c + A(H_n^{-1}(s)) \quad \text{ for $s \geq 0$.}
 \end{align}
 Thus, there exist  constants $c'$ and $t_0$ such that 
\begin{align}\label{apr55}
    B(t\Cf(H_n(t)))\leq A(c't)
  \quad \text{ for $t \geq t_0.$}
\end{align}
Hence, the conclusion follows.}
\end{proof}}
The following lemma is a counterpart of Lemma~\ref{int-conv} for Orlicz spaces of vector-valued functions. Its proof follows along the same lines as that of 
Lemma~\ref{int-conv}, and will be omitted.
{\begin{lem}\label{int-conv-aniso}
   Let $\Omega$ be a measurable subset of $\rn$, with $n\geq 2$, and 
    let $\Phi$ be a finite-valued  non-degenerate $n$-dimensional Young function. Assume that the sequence $\{U_k\} \subset L^\Phi(\Omega, \rn)$ and the function $U\in L^\Phi(\Omega, \rn)$ satisfy
    $$\int_\Omega \Phi\Big(\frac{1}{\lambda_0}(U_k-U)\Big)\d x \xrightarrow{k \to \infty} 0$$
    for some $\lambda_0>0$. {Let $\lambda_1>0$ be such that
    $$\int_\Omega \Phi\Big(\frac{1}{\lambda_1}U\Big)\d x<\infty.$$}
 Then,  
    \begin{align}
        \label{int-conv1-aniso}
        \int_\Omega \Phi\Big(\frac{1}\lambda U_k\Big)\d x \xrightarrow{k \to \infty}  \int_\Omega \Phi\Big(\frac{1}\lambda U\Big)\d x
    \end{align}
    for every $\lambda \geq 2\max(\lambda_0, \lambda_1\}$. \\ In particular, if $\{U_k\} \subset E^{\Phi}(\Omega)$, $U \in E^{\Phi}(\Omega)$ and $\|U_k - U\|_{L^{\Phi}(\Omega, \rn)}\to 0$, then~\eqref{int-conv1-aniso} holds for every $\lambda > 0$.
\end{lem}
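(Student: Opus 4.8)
The plan is to reproduce, almost word for word, the proof of Lemma~\ref{int-conv}, with the scalar Young function $A$ replaced by the $n$-dimensional Young function $\Phi$, scalar functions by vector-valued ones, and $L^A(\Omega)$ by $L^\Phi(\Omega,\rn)$. The only steps where the vectorial setting is genuinely felt are the convexity estimates and the implication \lq\lq modular-type convergence $\Rightarrow$ convergence in measure\rq\rq, and these carry over unchanged once the appropriate properties of $\Phi$ are recorded.

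First I would treat the case $|\Omega|<\infty$. Fix $\lambda\geq 2\max\{\lambda_0,\lambda_1\}$. Writing $U_k=(U_k-U)+U$ and using convexity of $\Phi$ together with the inequality $\Phi(s\,\xi)\leq s\,\Phi(\xi)$ for $s\in[0,1]$ (a consequence of $\Phi(0)=0$ and convexity), one obtains the pointwise bound
\begin{equation*}
\Phi\Big(\tfrac1\lambda U_k\Big)\leq\Phi\Big(\tfrac1{\lambda_0}(U_k-U)\Big)+\Phi\Big(\tfrac1{\lambda_1}U\Big)\qquad\text{a.e.\ in }\Omega\,.
\end{equation*}
Since $\int_\Omega\Phi\big(\tfrac1{\lambda_0}(U_k-U)\big)\d x\to0$, that sequence is uniformly integrable by the Vitali Convergence Theorem, and $\Phi\big(\tfrac1{\lambda_1}U\big)\in L^1(\Omega)$; hence $\{\Phi(\tfrac1\lambda U_k)\}$ is uniformly integrable. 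On the other hand, because $\Phi$ is finite-valued, even, convex, and non-degenerate in the sense of \eqref{phinondeg}, it is bounded below by a positive constant on each set $\{\xi:|\xi|\geq\varepsilon\}$, exactly as for a non-degenerate one-dimensional Young function; therefore the hypothesis forces $U_k\to U$ in measure, and, $\Phi$ being continuous, $\Phi(\tfrac1\lambda U_k)\to\Phi(\tfrac1\lambda U)$ in measure as well. A second application of the Vitali Convergence Theorem then yields \eqref{int-conv1-aniso} when $|\Omega|<\infty$.

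Next I would pass to an arbitrary measurable $\Omega$ by exhausting it with balls $B_R$ centered at the origin. Applying the finite-measure case to $\Omega\cap B_R$ and the same convexity bound on the complement, I would estimate
\begin{equation*}
\limsup_{k\to\infty}\Big|\int_\Omega\Phi\big(\tfrac1\lambda U_k\big)\d x-\int_\Omega\Phi\big(\tfrac1\lambda U\big)\d x\Big|\leq c\int_{\Omega\setminus B_R}\Phi\big(\tfrac1{\lambda_1}U\big)\d x
\end{equation*}
for a suitable constant $c$, using $\Phi\big(\tfrac2\lambda(U_k-U)\big)\leq\Phi\big(\tfrac1{\lambda_0}(U_k-U)\big)$ (whose integral over $\Omega$ tends to $0$) and $\Phi\big(\tfrac2\lambda U\big)\leq\Phi\big(\tfrac1{\lambda_1}U\big)\in L^1(\Omega)$. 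Letting $R\to\infty$ gives \eqref{int-conv1-aniso} in general. Finally, the assertion about sequences in $E^\Phi(\Omega)$ follows from \eqref{int-conv1-aniso} together with the vector-valued analogue of \eqref{eq:normconv-def}: norm convergence in $L^\Phi(\Omega,\rn)$ amounts to modular convergence with every constant, while $U,U_k\in E^\Phi(\Omega)$ supply the required finiteness of the modulars, so one applies the first part with $\lambda_0=\lambda_1=\lambda/2$ for arbitrary $\lambda>0$.

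I do not anticipate a real obstacle, since the argument is a transcription of the proof of Lemma~\ref{int-conv}. The one point deserving a separate line of justification -- and the only genuinely vectorial ingredient -- is the lower bound $\inf_{|\xi|\geq\varepsilon}\Phi(\xi)>0$ for a finite-valued non-degenerate $n$-dimensional Young function, which is exactly what turns the hypothesis into convergence in measure. This holds because such a $\Phi$ is continuous and strictly positive on the sphere $\{|\xi|=\varepsilon\}$, hence bounded there below by some $m_\varepsilon>0$, and $\Phi(s\,\zeta)\geq s\,\Phi(\zeta)\geq m_\varepsilon$ whenever $|\zeta|=\varepsilon$ and $s\geq1$, again by convexity and $\Phi(0)=0$.
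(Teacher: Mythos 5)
Your proposal is correct and takes exactly the route the paper intends: the paper explicitly states that the proof of Lemma~\ref{int-conv-aniso} follows the same lines as that of Lemma~\ref{int-conv} and omits it, and your argument is a faithful transcription of that proof, replacing $A$ and scalar $u_k$ by $\Phi$ and vector-valued $U_k$. You also correctly identify, and soundly justify, the single step that does not transfer purely formally -- the lower bound $\inf_{|\xi|\geq\varepsilon}\Phi(\xi)>0$ for a finite-valued, non-degenerate $n$-dimensional Young function, needed to pass from modular convergence to convergence in measure.
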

}

The last lemma of this section is a version of Lemma~\ref{lem:convEAn} for Orlicz spaces of vector-valued functions.
\begin{lem}\label{lem:Phin-conv}
    Let $\Omega$ be an open set in $\rn$, with $n\geq 2$. Let $\Phi : \Rn \to [0, \infty)$ be a non-degenerate $n$-dimensional Young function  satisfying the condition ~\eqref{Phi-inf}. 
    If  $|\Omega| = \infty$, assume, in addition,   that $\Phi$ also satisfies the condition \eqref{A-0-aniso}. 
    If 
    the sequence  $\{U_k\} \subset \sobzP$ and the function $U\in \sobzP$ are such that $U_k \to U$   modularly in $\sobzP$, then $\|U - U_k\|_{L^{\Phi_n}(\Omega)} \xrightarrow{k \to \infty} 0$.
\end{lem}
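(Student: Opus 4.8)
The plan is to mimic the proof of Lemma~\ref{lem:convEAn}, but with the Poincar\'e--Sobolev inequality \eqref{eq:sobemb} for $W^{1,A}(\Omega)$ replaced by its anisotropic homogeneous counterpart \eqref{eq:sobemb-2} on $\rn$, since now the spaces are homogeneous and $|\Omega|$ may be infinite. Pick $\lambda_c>0$ such that $\int_\Omega \Phi\big(\tfrac1{\lambda_c}(\nabla U_k-\nabla U)\big)\d x\to 0$. As in the definition \eqref{V1Phi0}, extend $U$ and $U_k$ by zero outside $\Omega$ (writing $\widehat U$, $\widehat U_k$), so that $\widehat U_k\to\widehat U$ modularly in $V^{1,\Phi}_0(\rn)$, and recall that modular convergence of the gradients, together with $\Phi$ being non-degenerate, gives convergence of $U_k$ to $U$ in measure; moreover, since $|\{|\widehat U|>t\}|<\infty$ and $|\{|\widehat U_k|>t\}|<\infty$ for $t>0$, the functions $g_s(U-U_k)$ below lie in $V^{1,\Phi}_0$ of a set of finite measure. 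Because of the property~\eqref{eq:normconv-def}, it suffices to prove that $\int_\Omega \Phi_n\big(\tfrac{|U-U_k|}{\lambda}\big)\d x\to 0$ for every fixed $\lambda>0$.

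Fix $\lambda>0$. As in Lemma~\ref{lem:convEAn}, split $\Omega$ according to $\{|U-U_k|<2s\}$ and $\{|U-U_k|\geq 2s\}$ for a small parameter $s>0$. On the first set, $\Phi_n\big(\tfrac{\min(2s,|U-U_k|)}{\lambda}\big)$ is dominated by the constant $\Phi_n(2s/\lambda)$ (finite, since $\Phi$, hence $\Phi_n$, is finite-valued under~\eqref{Phi-inf}) and tends to $0$ pointwise, because $U_k\to U$ in measure; here one must be slightly careful when $|\Omega|=\infty$ — but $\{|U-U_k|<2s\}$ need not have finite measure, so instead bound the integrand by $\Phi_n\big(\tfrac{\min(2s,|U-U_k|)}{\lambda}\big)\le \Phi_n\big(\tfrac{|U-U_k|}{\lambda}\big)\wedge \Phi_n(2s/\lambda)$ and use that $|\{|U-U_k|>t\}|\le |\{|\widehat U|>t/2\}|+|\{|\widehat U_k|>t/2\}|$ is finite and, by Fatou/equi-integrability controlled by the modular convergence, gives the required decay via Lemma~\ref{int-conv-aniso} applied with $U\equiv 0$ truncated; the cleanest route is simply to invoke that $U-U_k\to 0$ modularly in $L^\Phi$ hence, by the non-degeneracy and $|\{|U-U_k|>t\}|<\infty$, $\Phi_n\big(\tfrac{\min(2s,|U-U_k|)}{\lambda}\big)\to 0$ in $L^1(\Omega)$ by dominated convergence on the finite-measure set $\{|U-U_k|>0\}\cap\{\Phi_n(2s/\lambda)\ne 0\}$ — here $\Phi_n(0)=0$ handles the rest. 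On the second set, use $g_s$ from~\eqref{gs} with $2s=$ the threshold: $|U-U_k|\ge 2s$ implies $|U-U_k|\le 2(|U-U_k|-s)=2|g_s(U-U_k)|$, so the integral is bounded by $\int_{\rn}\Phi_n\big(\tfrac{2|g_s(\widehat U-\widehat U_k)|}{\lambda}\big)\d x$. Now apply the homogeneous anisotropic Poincar\'e--Sobolev inequality~\eqref{eq:sobemb-2} to $g_s(\widehat U-\widehat U_k)\in V^{1,\Phi}_0(\{|U-U_k|>s\})$: since $\nabla g_s(\widehat U-\widehat U_k)=\chi_{\{|U-U_k|\ge s\}}(\nabla U-\nabla U_k)$, and, by Vitali applied to $\Phi\big(\tfrac1{\lambda_c}(\nabla U-\nabla U_k)\big)$ together with $|\{|U-U_k|\ge s\}|\to 0$, we have $\int_{\rn}\Phi\big(\tfrac{|\nabla g_s(\widehat U-\widehat U_k)|}{\lambda_c}\big)\d x\le \big(\tfrac{\lambda}{2c\lambda_c}\big)^n$ for large $k$ — where $c$ is the constant in~\eqref{eq:sobemb-2} — choosing $\lambda_c$ so that $2c\lambda_c\big(\int\Phi(\cdots)\big)^{1/n}\le \lambda$, i.e.\ rescaling the argument, yields $\int_{\rn}\Phi_n\big(\tfrac{2|g_s(\widehat U-\widehat U_k)|}{\lambda}\big)\d x\le \int_{\rn}\Phi\big(\tfrac{|\nabla g_s(\widehat U-\widehat U_k)|}{\lambda_c}\big)\d x\le \int_{\rn}\Phi\big(\tfrac{|\nabla U-\nabla U_k|}{\lambda_c}\big)\d x\to 0$. (When $|\Omega|=\infty$, the extra hypothesis~\eqref{A-0-aniso} is exactly what makes $\Phi_n$ well-defined without the near-zero replacement, so that~\eqref{eq:sobemb-2} applies directly on the infinite-measure set $\rn$.)

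Combining the two pieces gives $\int_\Omega \Phi_n\big(\tfrac{|U-U_k|}{\lambda}\big)\d x\to 0$ for every $\lambda>0$, and hence $\|U-U_k\|_{L^{\Phi_n}(\Omega)}\to 0$ by~\eqref{eq:normconv-def}. The main obstacle I anticipate is the treatment of the ``small'' set $\{|U-U_k|<2s\}$ when $|\Omega|=\infty$: unlike in Lemma~\ref{lem:convEAn}, this set may have infinite measure, so dominated convergence cannot be applied blindly; the resolution is that $U-U_k$ vanishes identically outside a set of finite measure (a consequence of the finite-superlevel-set condition in~\eqref{V1Phi0} for both $U$ and $U_k$), so $\Phi_n\big(\tfrac{\min(2s,|U-U_k|)}{\lambda}\big)$ is supported in a set of finite measure and dominated there by the constant $\Phi_n(2s/\lambda)$, restoring the dominated convergence argument. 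A secondary technical point is ensuring $g_s(\widehat U-\widehat U_k)$ genuinely belongs to $V^{1,\Phi}_0$ of the finite-measure set $\{|U-U_k|>s\}$, which again follows from the superlevel-set condition and the Lipschitz chain rule quoted in the proof of Lemma~\ref{lem:convEAn}.
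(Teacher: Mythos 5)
Your structural outline is correct and matches the paper's intent: replace $A,A_n$ by $\Phi,\Phi_n$, drop the subtraction of the average (since \eqref{eq:sobemb-2} has no $u_\Omega$ term), and apply \eqref{eq:sobemb-2} to $g_s(\widehat U - \widehat U_k)/\lambda_c$. However, the resolution you propose for the ``small set'' $\{|U-U_k|<2s\}$ when $|\Omega|=\infty$ does not work. The finite-superlevel-set condition in \eqref{V1Phi0} says $|\{|\widehat U|>t\}|<\infty$ for each $t>0$; it does \emph{not} say that $|\{\widehat U\neq 0\}|<\infty$. The set $\{|U-U_k|>0\}=\bigcup_m\{|U-U_k|>1/m\}$ is a countable union of finite-measure sets and may well have infinite measure (think of $U(x)=e^{-|x|}$ on $\rn$). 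So $\Phi_n\big(\min(2s,|U-U_k|)/\lambda\big)$ is not supported on a set of finite measure, the constant $\Phi_n(2s/\lambda)$ is not an integrable dominator, and the dominated-convergence step is not rescued by your argument. You also assert without justification that modular convergence of the gradients implies $U_k\to U$ in measure, which you need for the Vitali step; in the homogeneous spaces this is not immediate from the definition of modular convergence in $\sobzP$.

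Both gaps are repaired at once by noting that the $g_s$-decomposition is actually unnecessary here: since \eqref{eq:sobemb-2} involves no average, you may apply it directly to $(U-U_k)/\lambda_c\in\sobzP$. Setting $\mu_k=\int_\Omega\Phi\big(\frac{\nabla U-\nabla U_k}{\lambda_c}\big)\d x\to 0$, the inequality gives $\int_\Omega\Phi_n\big(\frac{|U-U_k|}{c\lambda_c\mu_k^{1/n}}\big)\d x\le\mu_k$; for fixed $\lambda>0$ and $k$ large enough that $c\lambda_c\mu_k^{1/n}\le\lambda$, monotonicity of $\Phi_n$ yields $\int_\Omega\Phi_n\big(\frac{|U-U_k|}{\lambda}\big)\d x\le\mu_k\to 0$, for every $\lambda>0$, which is the claim. (In the case $|\Omega|<\infty$ without \eqref{A-0-aniso}, one first replaces $\Phi$ by an equivalent-near-infinity $\widetilde\Phi$ and uses \eqref{feb2}, exactly as the paper's convention for $\Phi_n$ prescribes.) This direct route also furnishes the convergence in measure of $U_k$ to $U$ as a by-product, so even if you insist on keeping the $g_s$-based estimate for the large set, you should establish the measure convergence this way rather than by the appeal to non-degeneracy alone.
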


\begin{proof}
    The proof parallels that of Lemma~\ref{lem:convEAn}, by replacing $A$ with $\Phi$ and $A_n$ with $\Phi_n$. We limit ourselves to pointing out a few variations. The first difference arises in equation ~\eqref{eq:goal-part2-1},  which can be simplified as
    \begin{equation*}
        \int_{\{|u - u_k| \geq 2s\}} \Phi_n\left(\frac{|u - u_k|}{\lambda}\right)\d x \leq \int_{\{|u - u_k| \geq 2s\}} \Phi_n\left(\frac{2(|u - u_k| - s)}{\lambda}\right)\d x \leq \int_{\Omega} \Phi_n\left(\frac{2|g_s(u - u_k)|}{\lambda}\right)\d x.
    \end{equation*}
    Equation ~\eqref{eq:help-dec1}, can be skipped and, instead of the inequality~\eqref{eq:sobemb}, one can use the inequality~\eqref{eq:sobemb-2}, which holds for arbitrary $\Omega$. Similarly to~\eqref{eq:mar28-6}, we obtain
    \begin{align*}
        \int_{\Omega} \Phi_n\left(\frac{2|g_s(u - u_k)|}{\lambda}\right)\d x &\leq \int_{\Omega} \Phi_n\left(\frac{|g_s(u - u_k)|}{c\lambda_c\left(\int_{\Omega} \Phi\left( \frac{\nabla(g_s (u - u_k))}{\lambda_c}\right)\d y\right)^{1/n}}\right)\d x\\ &\leq \int_{\Omega} \Phi\left(\frac{\nabla(g_s (u-u_k))}{\lambda_c}\right)\d x
        \leq \int_{\Omega} \Phi\left(\frac{|\nabla u - \nabla u_k|}{\lambda_c}\right)\d x \xrightarrow{k \to \infty} 0\,.
    \end{align*}
    The rest of the arguments is completely analogous.
\end{proof}

\section{Proofs of the main results}\label{sec:proofs}
\subsection{Isotropic spaces}

\begin{proof}[Proof of Theorem~\ref{theo-conA}]

We begin by proving that, if $\nabla u \in L^{\A}(\Omega)$, then $\nabla(f(u))$ belongs to $L^{\A}(\Omega)$. Since $u$ is weakly differentiable and $f$ is Lipschitz continuous, $f(u)$ is also weakly differentiable, and $\nabla (f(u)) = f'(u)\nabla u$ a.e. in $\Omega$.
Let $C_f > 0$ be the Lipschitz constant of the function $f$. Hence, $\|f'\|_{L^\infty(\mathbb R)} \leq C_f$ and therefore
\begin{equation}\label{eq:apr1}
    \int_{\Omega} \A \left(\frac{|f'(u)\nabla u|}{\lambda} \right)\d x \leq \int_{\Omega} \A \left(\frac{C_f|\nabla u|}{\lambda} \right)\d x < \infty\,,
\end{equation}
for sufficiently large $\lambda$. Furthermore, if $u \in L^A(\Omega)$, then $f(u) \in L^A(\Omega)$. Indeed, 
\begin{equation}\label{eq:apr2}
    \int_{\Omega} \A\left(\frac{|f(u)|}\lambda\right)\d x \leq \int_{\Omega} \A\left(\frac{C_f|u| + |f(0)|}{\lambda}\right)\d x \leq \int_{\Omega} \A\left(\frac{2C_f|u|}{\lambda}\right)\d x + \int_{\Omega} \A\left(\frac{2|f(0)|}{\lambda}\right)\d x\,.
\end{equation}
As $u \in L^A(\Omega)$, we have that $\int_{\Omega} \A\left(2C_f|u|/\lambda\right)\d x<\infty$ for sufficiently large $\lambda$. Also, if $|\Omega| < \infty$, then $\int_{\Omega} \A\left(2|f(0)|/\lambda\right)\d x = |\Omega|\A\left(2|f(0)|/\lambda\right) < \infty$. If $|\Omega| = \infty$ and $f(0) = 0$, then $\int_{\Omega} \A\left(2|f(0)|/\lambda\right)\d x\, = 0$. Thereby, the rightmost side of~\eqref{eq:apr2} is finite. Altogether, we have shown that $f(u) \in \sob$ for any $u \in \sob$.
\\
We next focus
on the continuity of the operator $T_f$. Consider any sequence $\{u_k\}$, converging  modularly to some function $u$ in $\sob$. Namely, there exists $\lambda_c  \geq  \|u\|_{W^{1, A}(\Omega)}$ such that
\begin{equation}\label{eq:lambdac}
     \int_{\Omega} \A\left(\frac{|\nabla u_k - \nabla u|}{\lambda_c}\right)\d x \xrightarrow{k \to \infty} 0\, \quad \text{ and } \quad  \int_{\Omega} \A\left(\frac{|u_k - u|}{\lambda_c}\right)\d x \xrightarrow{k \to \infty} 0\,.
\end{equation}
Plainly, $f(u_k) \to f(u)$ modularly in $L^A(\Omega)$. Indeed, if $\lambda > C_f\lambda_c$, then
\begin{equation}\label{eq:mar28}
    \int_{\Omega} \A\left(\frac{|f(u_k) - f(u)|}{\lambda}\right)\d x \leq \int_{\Omega} \A\left(\frac{C_f|u_k - u|}{\lambda}\right)\d x \xrightarrow{k \to \infty} 0\,.
\end{equation}
It remains to prove that $\nabla(f(u_k)) \to \nabla(f(u))$ modularly in $L^\A(\Omega)$, namely that
\begin{equation}\label{apr40}
    \int_{\Omega} \A\left(\frac{|\nabla(f(u_k)) - \nabla(f(u))|}{\lambda}\right)\d x  \xrightarrow{k \to \infty} 0
\end{equation}
for sufficiently large $\lambda >0$.
\\
Assume, for the time being, that $\Omega$ is bounded. For any $\varepsilon > 0$ let us set
\begin{align*}
    G_0 &= \{x \in \Omega: \nabla u(x) = 0\}\,,\\
    G_{\varepsilon} &= \{x \in \Omega: \varepsilon > |\nabla u(x)| > 0\}\,,\\
    \Omega_\varepsilon &= \{x \in \Omega: |\nabla u(x)| \geq \varepsilon\}\,.
\end{align*}
The first limit in \eqref{eq:lambdac} implies that $\int_{G_0} \A( |\nabla u_k|/
\lambda_c) \xrightarrow{k \to \infty} 0$. Hence, 
\begin{equation}\label{eq:apr3}
    \int_{G_0} \A\left(\frac{|\nabla(f(u_k)) - \nabla(f(u))|}{\lambda}\right)\d x = \int_{G_0} \A\left(\frac{|f'(u_k)\nabla u_k|}{\lambda}\right)\d x \leq \int_{G_0} \A\left(\frac{C_f|\nabla u_k|}{\lambda}\right)\d x \xrightarrow{k \to \infty} 0
\end{equation}
for $\lambda > C_f\lambda_c$.
\\
Moreover, since $|G_\varepsilon| \xrightarrow{\varepsilon \to 0} 0$, from ~\eqref{eq:lambdac} we deduce that 
\begin{align}\label{eq:apr4}
    &\int_{G_{\varepsilon}} \A\left(\frac{|\nabla(f(u_k)) - \nabla(f(u))|}{\lambda}\right)\d x\\ &\leq \int_{G_{\varepsilon}} \A\left(\frac{2C_f|\nabla u_k|}{\lambda}\right)\d x + \int_{G_{\varepsilon}} \A\left(\frac{2C_f|\nabla u|}{\lambda}\right)\d x \nonumber
    \xrightarrow{k \to \infty} 2\int_{G_{\varepsilon}} \A\left(\frac{2C_f|\nabla u|}{\lambda}\right)\d x\xrightarrow{\varepsilon \to 0} 0
\end{align}
for $\lambda > 4C_f\lambda_c$. Note that the convergence in $k$  rests upon Lemma~\ref{int-conv}.
\\
Thanks to~\eqref{eq:apr3} and~\eqref{eq:apr4}, it only remains  to show that  for every $\lambda_0 > 12C_f\lambda_c$ and  $\varepsilon > 0$ one has
\begin{equation}\label{eq:goal1}
    \int_{\Omega_\varepsilon} \A\left(\frac{|f'(u_k)\nabla u_k - f'(u)\nabla u|}{\lambda_0}\right)\d x \xrightarrow{k \to \infty} 0\,.
\end{equation}
Owing to Lusin's Theorem, for any $\delta > 0$, there exists an open set
$\Udel$, and a continuous function $g_{\delta} : \R \to \R$ such that $|\Udel| < \delta$, $g_\delta = f'$ in $\mathbb R\setminus S_\delta$, and  $|g_{\delta}| \leq C_f$.
\\ Clearly,
\begin{align}\label{eq:apr5}
     &\int_{\Omega_\varepsilon} \A\left(\frac{|f'(u_k)\nabla u_k - f'(u)\nabla u|}{\lambda}\right)\d x \leq  \int_{\Omega_\varepsilon} \A\left( \frac{3|f'(u_k)\nabla u_k - g_{\delta}(u_k)\nabla u_k|}{\lambda}\right)\d x\\ &+ \int_{\Omega_\varepsilon} \A\left(\frac{3|g_{\delta}(u_k)\nabla u_k - g_{\delta}(u)\nabla u|}{\lambda}\right)\d x + \int_{\Omega_\varepsilon} \A\left( \frac{3|g_{\delta}(u)\nabla u - f'(u)\nabla u|}{\lambda}\right)\d x \nonumber
\end{align}
for $\lambda >0$.
  Owing to equation \eqref{eq:lambdac} and  the continuity of $g_\delta$,
every subsequence of $\left\{\A\left( \frac{3(g_{\delta}(u_k)\nabla u_k - g_{\delta}(u)\nabla u)}{\lambda}\right)\right\}$ has a subsequence converging to $0$  a.e.. Thus, $ \A\left(\frac{3(g_{\delta}(u_k)\nabla u_k - g_{\delta}(u)\nabla u)}{\lambda}\right) \to 0$  in measure. Moreover,  
\begin{equation*}
    \A\left(  \frac{3|g_{\delta}(u_k)\nabla u_k - g_{\delta}(u)\nabla u|}{\lambda}\right) \leq \A\left(  \frac{6C_f|\nabla u_k|}{\lambda}\right) + \A\left(  \frac{6C_f|\nabla u|}{\lambda}\right)\,, 
\end{equation*}
and the right-hand side is equiintegrable by Lemma~\ref{int-conv}, provided that $\lambda > 12C_f\lambda_c$. This piece of information, combined with Vitali Convergence Theorem, implies that 
\begin{equation}\label{eq:dec1}
    \int_{\Omega_\varepsilon} \A\left(  \frac{3|g_{\delta}(u_k)\nabla u_k - g_{\delta}(u)\nabla u|}{\lambda}\right)\d x \xrightarrow{k \to \infty} 0
\end{equation}
for $\lambda > 12C_f\lambda_c$.
\\
Next, we  have that
\begin{equation}\label{eq:apr6}
    \int_{\Omega_\varepsilon} \A\left(  \frac{3|f'(u_k)\nabla u_k - g_{\delta}(u_k)\nabla u_k|}{\lambda}\right)\d x \leq \int_{\Omega_{\varepsilon}} \chi_{\Udel}(u_k) \A\left(  \frac{6C_f|\nabla u_k|}{\lambda}\right)\d x\,.
\end{equation}
Let us set $V_{k,\varepsilon} = \Omega_{\varepsilon} \cap \{x \in \Omega: |\nabla u_k(x)| \geq \tfrac{\varepsilon}{2}\}$. As $\nabla u_k \to \nabla u$ in measure,   $$|\Omega_\varepsilon \setminus V_{k,\varepsilon}| \xrightarrow{k \to \infty} 0\,.$$ Therefore, by Lemma~\ref{int-conv} and Vitali Convergence Theorem, 
\begin{equation}\label{eq:dec3}
    \int_{\Omega_{\varepsilon} \setminus V_{k, \varepsilon}} \chi_{\Udel}(u_k) \A\left(  \frac{6C_f|\nabla u_k|}{\lambda}\right)\d x \leq \int_{\Omega_{\varepsilon} \setminus V_{k, \varepsilon}} \A\left(  \frac{6C_f|\nabla u_k|}{\lambda}\right)\d x \xrightarrow{k \to \infty} 0
\end{equation}
for $\lambda > 12C_f\lambda_c$.
Also,
\begin{equation}\label{eq:may9}
    |V_{k, \varepsilon} \cap u_k^{-1}(\Udel)| = \frac{2}{\varepsilon}\int_{V_{k, \varepsilon}}\chi_{\Udel}(u_k) \cdot \frac{\varepsilon}{2}\d x   \leq \frac{2}{\varepsilon} \int_{\Omega} \chi_{\Udel}(u_k)|\nabla u_k|\d x\,.
\end{equation}
\\ {As  a consequence of~\cite[Theorem 1]{GBM} and~\cite[Lemma 3.1]{BL}, one has that, if  
 $Z$ is any measurable subset of $\R$, then the functional \begin{equation}\label{continuity}
    W^{1, 1}(\Omega)   \ni  v \mapsto \int_{\Omega} \chi_{Z}(u)|\nabla v|\d x \quad \text{ is continuous on $W^{1, 1}(\Omega)\,$.}
    \end{equation}}
Note that as $\Omega$ is bounded, $W^{1, A}(\Omega) \to W^{1, 1}(\Omega)$, and modular convergence in $W^{1, A}(\Omega)$ implies convergence in $W^{1, 1}(\Omega)$. Hence, thanks to  the property \eqref{continuity},  
\begin{equation}\label{eq:may10-2}
    \int_{\Omega} \chi_{\Udel}(u_k)|\nabla u_k|\d x \xrightarrow{k \to \infty} \int_{\Omega} \chi_{\Udel}(u)|\nabla u|\d x = \int_{\Udel}  \mathcal{H}^{n-1}(\{u=t\}) \d t \xrightarrow{\delta \to 0} 0,
\end{equation}
where the last equality holds thanks to the coarea formula~\cite[Theorem 1.1]{Coarea}. Consequently, by~\eqref{eq:may9} and~\eqref{eq:may10-2}, 
$$\lim _{\delta \to 0}\Big(\limsup_{k} |V_{k, \varepsilon} \cap u_k^{-1}(\Udel)|\Big)=0,$$ whence
\begin{equation*}
   \lim _{\delta \to 0} \bigg(\limsup_{k\to \infty} \int_{V_{k, \varepsilon}} \chi_{\Udel}(u_k) \A\left(  \frac{6C_f|\nabla u_k|}{\lambda}\right)\d x\bigg) = 0.
\end{equation*}
Combining the latter limit
with equations ~\eqref{eq:apr6} and~\eqref{eq:dec3} yields
\begin{equation}\label{eq:dec4}
\int_{\Omega_\varepsilon} \A\left(  \frac{3|f'(u_k)\nabla u_k - g_{\delta}(u_k)\nabla u_k|}{\lambda}\right)\d x \xrightarrow{k \to \infty} 0\,.
\end{equation}
To  estimate the last integral on the right-hand side of the inequality \eqref{eq:apr5}, notice that
\begin{equation}\label{eq:apr7}
     \int_{\Omega_\varepsilon} \A\left(  \frac{3|g_{\delta}(u)\nabla u - f'(u)\nabla u|}{\lambda}\right)\d x \leq \int_{\Omega_{\varepsilon}} \chi_{\Udel}(u) \A\left(  \frac{6C_f|\nabla u|}{\lambda}\right)\d x\,.
\end{equation}
A chain analogous to ~\eqref{eq:may9} and an application of the coarea formula as in \eqref{eq:may10-2} enables showing that
 $|\Omega_{\varepsilon} \cap u^{-1}(\Udel)| \xrightarrow{\delta \to 0} 0$. Thus,
\begin{equation*}
    \int_{\Omega_\varepsilon} \A\left(  \frac{3|g_{\delta}(u)\nabla u - f'(u)\nabla u|}{\lambda}\right)\d x \xrightarrow{\delta \to 0} 0\,,
\end{equation*}
whence, via ~\eqref{eq:apr5},~\eqref{eq:dec1} and~\eqref{eq:dec4}, {we infer that
the limit \eqref{eq:goal1} holds 
for every $\lambda_0 > 12C_f\lambda_c$.}
This concludes the proof of \eqref{apr40} for bounded $\Omega$.
\\ 
It remains to remove this additional assumption. Let $\Omega$ be any open set in $\rn$. Given any bounded open set  $G \subset \Omega$, we already know that
\begin{equation*}
     \int_{G} \A\left(  \frac{|f'(u_k)\nabla u_k - f'(u)\nabla u|}{\lambda_0}\right)\d x \xrightarrow{k \to \infty} 0\,,   
\end{equation*}
for $\lambda_0 > 12C_f\lambda_c$.
Fix  any such $\lambda_0$ and  any $\varepsilon > 0$,  and choose $G$ such that $\int_{\Omega \setminus G} \A\left(  \frac{|\nabla u|}{\lambda_c}\right) < \varepsilon$. We have that
\begin{align*}
    \int_{\Omega \setminus U} \A\left(  \frac{|f'(u_k)\nabla u_k - f'(u)\nabla u|}{\lambda_0}\right)\d x &\leq \int_{\Omega \setminus U} \A\left(  \frac{|\nabla u_k|}{2\lambda_c}\right)\d x + \int_{\Omega \setminus U} \A\left(  \frac{|\nabla u|}{2\lambda_c}\right)\d x\\
    & \quad \xrightarrow{k \to \infty} 2\int_{\Omega \setminus U}\A\left(  \frac{|\nabla u|}{2\lambda_c}\right)\d x \leq 2\varepsilon\,,
\end{align*}
where the limit holds thanks to Lemma~\ref{int-conv}. Thereby,  
\begin{equation*}
    \limsup_{k \to \infty} \int_{\Omega} \A\left(  \frac{|f'(u_k)\nabla u_k - f'(u)\nabla u|}{\lambda_0}\right)\d x \leq 2\varepsilon\,.
\end{equation*}
Thanks to the arbitrariness of $\varepsilon$, we conclude that
\begin{equation*}
    \int_{\Omega} \A\left(  \frac{|f'(u_k)\nabla u_k - f'(u)\nabla u|}{\lambda_0}\right)\d x \xrightarrow{k \to \infty} 0\,,
\end{equation*}
whence \eqref{apr40} holds for $\lambda >  12C_f\lambda_c$.
\end{proof}
\begin{proof}[Proof of Theorem~\ref{1d}]
 Part (i).  Given $M > 0$, let $f_M : \mathbb R \to \mathbb R$ be the Lipschitz continuous function defined as 
\begin{align}
    \label{fM}
    f_M (t) = \begin{cases}
        f(-M) & \quad \text{if $t<-M;$}
        \\ f(t) & \quad \text{if $-M \leq t \leq M;$}
        \\ f(M) & \quad \text{if $t>M$.}
    \end{cases}
\end{align}
By~\eqref{embWR}, any function $u \in W^{1, A}(\Omega)$ also belongs to $L^{\infty}(\Omega)$. Choose $M = \|u\|_{L^{\infty}(\Omega)}$ in \eqref{fM}.  Since $f(u) = f_M(u)$,  by Theorem~\ref{theo-conA}, which also holds with the same proof for $n=1$,
applied to $f_M$, we obtain that $f(u) \in W^{1, A}(\Omega)$. Next, consider any sequence $u_k \to u$ modularly in $\sob$. In particular, sequence $\{\|u_k\|_{W^{1, A}(\Omega)}\}$ is bounded. From~\eqref{1demb}, we infer that there exists $M > 0$ such that $\|u\|_{L^{\infty}(\Omega)} \leq M$ and $\|u_k\|_{L^{\infty}(\Omega)} \leq M$ for $k \in \mathbb N$. Since $f(u) = f_M(u)$, and  $f(u_k) = f_M(u_k)$ for $k\in \mathbb N$,  an application of Theorem~\ref{theo-conA} again, with $f$ replaced with $f_M$, tells us that $f(u_k) \to f(u)$ modularly in $W^{1, A}(\Omega)$. Hence, ~\eqref{jan23bis} is established.
\\
Part (ii).  
If $f$ is Lipschitz continuous, then \eqref{jan23_0} follows from
Theorem \ref{coro-conA}, which also holds for $n=1$. 
The case when $f$ is
 just locally Lipschitz continuous can be reduced to the previous one
 via the same argument as in the proof of Part $(i)$. The choice of $M$ in \eqref{fM} now depends on the inequality $\|u\|_{L^{\infty}(\Omega)} \leq c\|u'\|_{L^A(\Omega)}$ for $c = c(A, \Omega)$ and all $u \in V_0^{1, A}(\Omega)$, which is a consequence of inequalities~\eqref{1demb0} and~\eqref{1demb0inf}.
\end{proof}
\begin{proof}[Proof of Theorem~\ref{theo:conB1}]
{Here, and in what follows, we shall repeatedly  make use  of the fact that, if 
 $u: \Omega \to \mathbb R$ is any weakly differentiable function, and $f: \mathbb R \to \mathbb R$ is a locally Lipschitz continuous function, then $f(u)$ is weakly differentiable if and only if the function $f'(u)\nabla u$ is locally integrable in $\Omega$. Moreover, if this is the case, then $\nabla (f(u))= f'(u)\nabla u$
 a.e. in $\Omega$. 
 We refer to \cite{MarcusMizel72} for this and related results.}
\\ Part (i).
Observe that as $A$ satisfies~\eqref{A-convinf}, and $\Omega \in \mathcal{G}_{1/n'}$, by \eqref{w1Ainfinity} the space $W^{1, A}(\Omega)$ is continuously embedded into $L^{\infty}(\Omega)$. Therefore,  if $u_k \to u$   modularly in $ W^{1, A}(\Omega)$, there exists $M > 0$ such that $\|u\|_{L^{\infty}(\Omega)} \leq M$ and $\|u_k\|_{L^{\infty}(\Omega)} \leq M$ for every $k$. Therefore, the same argument as in the proof of Theorem \ref{1d} yields \eqref{jan23bis}.

Part (ii). As a first step, we show that, if  $u \in \sob$, then $f(u) \in \sobB$. Our assumptions on $f$ ensure that
\begin{align}
    \label{apr41}
    |f(t)| \leq \vk|t|\Cf(\vk|t|) + |f(0)| \quad \text{for $t \in \mathbb R$.}
\end{align}
  Let $c$ be the constant appearing in Lemma~\ref{lem:lem1}. Thus,
    \begin{align}\label{eq:mar28-2}
        \int_{\Omega} B\left(\frac{|f(u)|}{4\lambda \vk} \right)\d x &\leq \int_{\Omega} B\left(\frac{|f(0)|}{2\lambda \vk}\right)\d x + \int_{\Omega} B\left(\frac{|u|\Cf(\vk|u|)}{2\lambda}\right)\d x \\
        &\leq |\Omega|B\left(\frac{|f(0)|}{2\lambda \vk}\right) + c|\Omega| + \int_{\Omega} A_n(\vk|u|)\d x + \int_{\Omega} A\left(\frac{|u|}{\lambda}\right)\d x\,.\nonumber
    \end{align}
    The rightmost side of the chain \eqref{eq:mar28-2} is finite 
    for sufficiently large $\lambda > 0$, inasmuch as $|\Omega|<\infty$, $u \in L^A(\Omega)$, and the embedding \eqref{embEsigma} holds with $\sigma =n$. This tells us that $f(u) \in L^B(\Omega)$. 
    \\
  Furthermore, by~\eqref{eq:lem1}, 
\begin{equation}\label{eq:jann1}
            \int_{\Omega} B \left( \frac{|f'(u)| |\nabla u|}{\lambda} \right)\d x \leq  \int_{\Omega} B \left( \frac{\vk \Cf(\vk|u|)|\nabla u|}{\lambda} \right)\d x \leq  \int_{\Omega} \left( c + A\left(\frac{2\vk|\nabla u|}{\lambda}\right) + A_n(\vk|u|)\right)\d x < \infty\,,
    \end{equation}
     for  $\lambda \geq 2\vk  \|u\|_{W^{1, A}(\Omega)}$, {whence $|f'(u)| |\nabla u| \in L^B$. Altogether, we have that $f(u) \in \sobB$.}

As far as the continuity of the operator $T_f$ is concerned, consider any sequence $u_k \to u$  modularly in $\sob$. Thus, we may pick $\lambda_c  \geq  \|u\|_{W^{1, A}(\Omega)}$ such that
    \begin{equation}\label{eq:mar28-lambdac}
        \int_{\Omega} A\left( \frac{|u_k - u|}{\lambda_c}\right)\d x \xrightarrow{k \to \infty} 0 \quad \text{and} \quad \int_{\Omega} A\left( \frac{|\nabla u_k - \nabla u|}{\lambda_c}\right)\d x \xrightarrow{k \to \infty} 0\,. 
    \end{equation}
    To prove that $f(u_k) \to f(u)$ modularly in $L^B$,   note that 
    \begin{align}\label{eq:apr8}
        B\left(\frac{|f(u_k(x)) - f(u(x))|}{\lambda}\right) &\leq B\left(\frac{2|f(u(x))|}{\lambda}\right) + B\left(\frac{2|f(u_k(x))|}{\lambda}\right) \\ \nonumber
        &\leq B\left(\frac{2|f(u(x))|}{\lambda}\right) + B\left(\frac{4|f(0)|}{\lambda}\right) + B\left(\frac{4\vk|u_k(x)|\Cf(\vk|u_k(x)|)}{\lambda}\right)
    \end{align}
    for  $\lambda > 0$ and $x \in \Omega$.
    By Lemma~\ref{lem:lem1}, 
    \begin{equation}\label{eq:apr9}
        B\left(\frac{4\vk|u_k(x)|\Cf(\vk|u_k(x)|)}{\lambda}\right) \leq  c + A\left(\frac{8\vk|u_k(x)|}{\lambda}\right) + A_n(\vk|u_k(x)|)\quad \text{for  $\lambda > 0$ and $x \in \Omega$.}
    \end{equation}
    From Lemmas~\ref{lem:convEAn} and \ref{int-conv} we deduce that $\int_{\Omega} A_n(\vk|u_k|)\d x \xrightarrow{k \to \infty} \int_{\Omega} A_n(\vk|u|)\d x$.  Thanks to Lemma~\ref{int-conv} again, this piece of information ensures that the right-hand side of~\eqref{eq:apr9} is equiintegrable for $\lambda > 16\vk \lambda_c$. As every subsequence of $\{f(u_k(x))\}$ has a subsequence that converges pointwise a.e. to $f(u(x))$, the sequence $\{f(u_k)\}$ converges  to $f(u)$ in measure. Thereby, the estimates~\eqref{eq:apr8} and~\eqref{eq:apr9}, via Vitali Convergence Theorem, imply that
    \begin{equation}\label{eq:mar28-3}
        B\left(\frac{|f(u_k(x)) - f(u(x))|}{\lambda}\right) \xrightarrow{k \to \infty} 0\,
    \end{equation}
    for $\lambda > 16\vk\lambda_c$. Hence, $f(u_k) \to f(u)$ modularly in $L^B(\Omega)$. 
    \\
    It remains to prove the modular convergence of $\nabla(f(u_k))$ to $\nabla (f(u))$ in $L^B(\Omega)$. Given $\varepsilon > 0$, let us set:
\begin{align*}
    G_0 &= \{x \in \Omega: \nabla u(x) = 0\}\,,\\
    G_{\varepsilon} &= \{x \in \Omega: \varepsilon > |\nabla u(x)| > 0\}\,,\\
    \Omega_\varepsilon &= \{x \in \Omega: |\nabla u(x)| \geq \varepsilon\}\,.
\end{align*}
By \eqref{eq:mar28-lambdac},  $\int_{G_0} A(|\nabla u_k|/\lambda_c) \xrightarrow{k \to \infty} 0$.  Moreover, since any subsequence of $\{u_k\}$ admits a subsequence which converges a.e. to $u$, the sequence $f'(u_k)$ is bounded a.e. in $\Omega$, and $f'(u_k)|\nabla u_k| \to 0$  {in measure} in $G_0$. By Lemma~\ref{lem:lem1},  
\begin{equation*}
    B\left(\frac{|f'(u_k)\nabla u_k|}{2\lambda_c}\right) \leq c + A\left(\frac{|\nabla u_k|}{\lambda_c}\right) + A_n(\vk|u_k|)\,.
\end{equation*}
Lemma~\ref{lem:convEAn} ensures that the right-hand side of the last inequality converges in $L^1(G_0)$. Hence, owing to Vitali Convergence Theorem,
\begin{equation}\label{eq:may6}
    \int_{G_0} B\left(\frac{|f'(u_k)\nabla u_k|}{2\lambda_c}\right)\d x \xrightarrow{k \to \infty} 0\,.
\end{equation}
 Next, note that $|G_{\varepsilon}| \xrightarrow{\varepsilon \to 0} 0$. Moreover, 
\begin{align}\label{eq:may7}
    &\int_{G_{\varepsilon}} B\left(\frac{|\nabla(f(u_k)) - \nabla(f(u))|}{\lambda}\right)\d x \\ \nonumber 
    &\leq \int_{G_{\varepsilon}} \left(c + A\left(\frac{|\nabla u_k|}{2\lambda_c}\right) + A_n(\vk|u_k|)\right)\d x + \int_{G_{\varepsilon}} B\left(\frac{|\nabla (f(u))|}{2\vk \lambda_c}\right)\d x \\ \nonumber 
    & \quad \xrightarrow{k \to \infty} \int_{G_{\varepsilon}} \left(c + A\left(\frac{|\nabla u|}{2\lambda_c}\right) + A_n(\vk|u|)\right)\d x + \int_{G_{\varepsilon}} B\left(\frac{|\nabla (f(u))|}{2\vk \lambda_c}\right)\d x \xrightarrow{\varepsilon \to 0} 0\,,
\end{align}
 for $\lambda \geq 8\lambda_c\vk$, where the limits hold thanks to
Lemma~\ref{int-conv} and equation ~\eqref{eq:jann1}.
\\ Our last task is to show that
\begin{equation}\label{eq:goal1-2}
    \int_{\Omega_\varepsilon} B\left(\frac{|f'(u_k)\nabla u_k - f'(u)\nabla u|}{\lambda}\right)\d x \xrightarrow{k \to \infty} 0
\end{equation}
for  $\lambda \geq 24\vk \lambda_c$ and  $\varepsilon > 0$.
By Lusin's Theorem, for any $\delta > 0$, there exists an open set $\Udel$ and a continuous function $g_{\delta} : \R \to \R$ such that $|\Udel| < \delta$ and $g_{\delta} = f'$ in $\R \setminus \Udel$. We may also assume that $|g_{\delta}(t)| \leq \vk \Cf(t)$, since the function 
$$\mathbb R \ni t \mapsto \min\{\vk \Cf(\vk|t|), \max\{(-\vk \Cf(\vk|t|), g_{\delta}(t))\}\}$$
 is also continuous and agree with $f'$ in $\R \setminus \Udel$, as $|f'| \leq \vk \Cf(\vk|t|)$ for $t \in \mathbb R$.
\\ Consequently,
\begin{align}\label{eq:apr5-2}
     &\int_{\Omega_\varepsilon} B\left(\frac{|f'(u_k)\nabla u_k - f'(u)\nabla u|}{\lambda}\right)\d x \leq  \int_{\Omega_\varepsilon} B\left(\frac{3|f'(u_k)\nabla u_k - g_{\delta}(u_k)\nabla u_k|}{\lambda}\right)\d x\\ \nonumber &+ \int_{\Omega_\varepsilon} B\left(\frac{3|g_{\delta}(u_k)\nabla u_k - g_{\delta}(u)\nabla u|}{\lambda}\right)\d x + \int_{\Omega_\varepsilon} B\left(\frac{3|g_{\delta}(u)\nabla u - f'(u)\nabla u|}{\lambda}\right)\d x
\end{align}
for  $\lambda > 0$.  
Every subsequence of $\Big\{B\left(\frac{3|g_{\delta}(u_k)\nabla u_k - g_{\delta}(u)\nabla u|}{\lambda}\right)\Big\}$ has a subsequence that converges to $0$  a.e. in $\Omega$. This ensures  that $ B\left(\frac{3|g_{\delta}(u_k)\nabla u_k - g_{\delta}(u)\nabla u|}{\lambda}\right) \to 0$   in measure. Moreover, 
\begin{align*}
    &B\left(\frac{3|g_{\delta}(u_k)\nabla u_k - g_{\delta}(u)\nabla u|}{\lambda}\right) \leq c + A\left(\frac{|\nabla u_k|}{2\lambda_c}\right) + A_n(\vk|u_k|) + B\left(\frac{\Cf(\vk|u|)|\nabla u|}{4\lambda_c}\right)
\end{align*}
for $\lambda \geq 24\lambda_c\vk$. Hence, via Lemma~\ref{int-conv}, equation ~\eqref{eq:jann1}, and Vitali Convergence Theorem one infers that
\begin{equation}\label{eq:apr8-2}
    \int_{\Omega_\varepsilon} B\left(\frac{3|g_{\delta}(u_k)\nabla u_k - g_{\delta}(u)\nabla u|}{\lambda}\right)\d x \xrightarrow{k \to \infty} 0\,.
\end{equation}
Next, observe that  $|\Omega_\varepsilon \cap u^{-1}(\Udel)| \xrightarrow{\delta \to 0} 0$. Indeed, the coarea formula~\cite[Theorem 1.1]{Coarea} and the fact that $|S_{\delta}| < \delta$ imply
\begin{equation*}
    |\Omega_{\varepsilon} \cap u^{-1}(S_{\delta})| \leq \frac{1}{\varepsilon} \int_{\Omega_{\varepsilon}} \chi_{\Udel}(u)|\nabla u|\d x = {\frac{1}{\varepsilon}}\int_{\Udel} \mathcal{H}^{n-1}(\{u=t\})\d t \xrightarrow{\delta \to 0} 0\,.
\end{equation*}
Thereby, 
\begin{align}\label{apr45}
    &\int_{\Omega_\varepsilon} B\left(\frac{3|g_{\delta}(u)\nabla u - f'(u)\nabla u|}{\lambda}\right)\d x = \int_{\Omega_\varepsilon}\chi_{\Udel}(u) B\left(\frac{3|g_{\delta}(u)\nabla u - f'(u)\nabla u|}{\lambda}\right)\d x  \\ \nonumber
    & \quad \leq \int_{\Omega_{\varepsilon}} \chi_{\Udel}(u) \left(c + A\left(\frac{|\nabla u|}{\lambda_c}\right) + A_n(\vk|u|)\right)\d x \xrightarrow{\delta \to 0} 0
\end{align}
for $\lambda \geq 24\lambda_c\vk$.
\\
Now, set $V_{k, \varepsilon} = \{x \in \Omega_{\varepsilon} : |\nabla u_k| \geq \frac{\varepsilon}{2}\}$. We have
\begin{equation}\label{eq:may9-2}
    |V_{k, \varepsilon} \cap u_k^{-1}(\Udel)| = \frac{2}{\varepsilon} \int_{V_{k, \varepsilon}} \chi_{\Udel}(u_k)\frac{\varepsilon}{2}\d x  \leq \frac{2}{\varepsilon} \int_{\Omega_{}} \chi_{\Udel}(u_k)|\nabla u_k|\d x\,.
\end{equation}
By the property ~\eqref{continuity}, the operator $v \mapsto \int_{\Omega} \chi_{\Udel}(v) |\nabla v|\d x$ is continuous on $W^{1, 1}(\Omega)$. Therefore,  
\begin{equation}\label{eq:may10}
    \int_{\Omega} \chi_{\Udel}(u_k)|\nabla u_k|\d x \xrightarrow{k \to \infty} \int_{\Omega} \chi_{\Udel}(u)|\nabla u|\d x = \int_{\Udel}H^{n-1}(\{u=t\})\, \d t\xrightarrow{\delta \to 0} 0\,.
\end{equation}
Coupling ~\eqref{eq:may9-2} with ~\eqref{eq:may10} tells us that  ${\lim_{k}} |V_{k, \varepsilon} \cap u_k^{-1}(\Udel)| \xrightarrow{\delta \to 0} 0$. Thus, 
\begin{align}\label{eq:may11}
    &\limsup_{k} \int_{V_{k , \varepsilon}} B\left(\frac{3|f'(u_k)\nabla u_k - g_{\delta}(u_k)\nabla u_k|}{\lambda}\right)\d x \nonumber\\
    &\leq \limsup_{k} \int_{V_{k, \varepsilon}} \chi_{\Udel}(u_k)\left(c + A\left(\frac{|\nabla u_k|}{2\lambda_c}\right) + A_n(\vk|u_k|)\right)\d x \xrightarrow{\delta \to 0} 0
\end{align}
for $\lambda \geq 24\lambda_c\vk$, where
the limit in $\delta$ relies upon the uniform integrability of $A\left(\frac{|\nabla u_k|}{2\lambda_c}\right)$ and $ A_n(\vk|u_k|)$.

Since $\nabla u_k \to \nabla u$ in measure, we have that $|\Omega_{\varepsilon} \setminus V_{k, \varepsilon}| \xrightarrow{k \to \infty} 0$.  By  uniform integrability, this implies
\begin{align}\label{eq:may12}
    \int_{\Omega_{\varepsilon} \setminus V_{k , \varepsilon}} & B\left(\frac{3|f'(u_k)\nabla u_k - g_{\delta}(u_k)\nabla u_k|}{\lambda}\right)\d x \\ \nonumber
    &\leq \int_{\Omega_{\varepsilon} \setminus V_{k, \varepsilon}} 2\left(c + A\left(\frac{|\nabla u_k|}{2\lambda_c}\right) + A_n(\vk|u_k|)\right)\d x \xrightarrow{k \to \infty} 0
\end{align}
for $\lambda \geq 24\lambda_c\vk$.
\\
Equation \eqref{eq:goal1-2} follows  from \eqref{eq:apr5-2}, \eqref{eq:apr8-2}, \eqref{apr45}, and \eqref{eq:may12}.
\\ Thanks to the arbitrariness of $\varepsilon$, from equations \eqref{eq:may6}
, \eqref{eq:may7}, and \eqref{eq:goal1-2} we deduce that
\begin{align}
    \label{apr46}
    \int_{\Omega} B\left(\frac{|f'(u_k)\nabla u_k - f'(u)\nabla u|}{\lambda}\right)\d x \xrightarrow{k \to \infty} 0
\end{align}
for $\lambda \geq 24\lambda_c\vk$. The modular convergence  $\nabla (f(u_k))\to \nabla (f(u))$ in $L^B(\Omega)$ is thus   established. The proof of the modular convergence   $ f(u_k)\to  f(u) $ in $\sobB$ is complete.
\end{proof}

{\begin{proof}[Proof Theorem~\ref{theo:conB2}]
 Part (i).
 Thanks to the  inequality \eqref{feb4inf}, this case can be dealt with via the same argument as in the proof of Part (i) of Theorem~\ref{theo:conB1}.
 \\ Part (ii). Since the assumptions imposed on the function $\Df$ only concern its behavior near zero, given $\overline t >0$ we may assume, by modifying, if necessary, $\Df$ far from zero, that 
$\Df(t) \leq A_n(t)$ for  $t\geq \overline t$. Therefore,  by coupling the assumptions ~\eqref{eq:inq-assD}  and ~\eqref{eq:inq-ass2}, we may suppose that
\begin{equation*}
    A^{-1}(t)\Cf(\Df^{-1}(t)) \leq B^{-1}(t) \quad \text{ for  $t \geq 0$.}
\end{equation*}
The assumptions of Lemma~\ref{lem:inqD} are thus satisfied.
\\ We begin by 
 proving that $f(u) \in W^{1, B}(\Omega)$ for any $u \in W^{1, A}(\Omega)$. Lemma~\ref{lem:inqD} entails that
\begin{equation}\label{eq:feb1}
    \int_{\Omega} B\left(\frac{|f(u)|}{\lambda}\right)\d x \leq \int_{\Omega} B\left(\frac{\vk|u|\Cf(\vk|u|)}{\lambda}\right)\d x \leq \int_{\Omega} \left(\Df(\vk|u|) + A\left(\frac{\vk|u|}{\lambda}\right) \right)\d x
\end{equation}
for $\lambda >0$. The last integral is finite for $\lambda \geq \vk \|u\|_{W^{1, A}(\Omega)}$, since  $u \in W^{1, A}(\Omega)$, and, as a consequence of  Lemma~\ref{lem:convED}, $u \in E^\Df(\Omega)$.
This shows  that $f(u) \in L^{B}(\Omega)$. 
\\ Similarly,  
\begin{equation*}
    \int_{\Omega} B\left(\frac{|f'(u)\nabla u|}{\lambda}\right)\d x \leq \int_{\Omega} B\left(\frac{\vk|\nabla u|\Cf(\vk|u|)}{\lambda}\right)\d x \leq \int_{\Omega} \left( \Df(\vk|u|) + A\left(\frac{\vk|\nabla u|}{\lambda}\right) \right)\d x
\end{equation*}
for $\lambda >0$. As  the last integral is   finite 
for $\lambda \geq \vk \|u\|_{W^{1, A}(\Omega)}$, we have that $\nabla(f(u)) \in L^{B}(\Omega)$. In conclusion,  $f(u) \in W^{1, B}(\Omega)$.  
\\ 
Let us now establish the continuity of the operator $T_f$ from $W^{1, A}(\Omega)$ into $W^{1, B}(\Omega)$. Assume that  $\{u_k\} \subset \sob$ and $u \in W^{1, A}{(\Omega)}$ are such that $u_k \to u$  modularly in $W^{1, A}(\Omega)$. Choose $\lambda_c  \geq \|u\|_{W^{1, A}(\Omega)}$ such that
\begin{equation*}
        \int_{\Omega} A\left( \frac{|u_k - u|}{\lambda_c}\right)\d x \xrightarrow{k \to \infty} 0 \quad \text{and} \quad \int_{\Omega} A\left( \frac{|\nabla u_k - \nabla u|}{\lambda_c}\right)\d x \xrightarrow{k \to \infty} 0\,. 
\end{equation*}
 Consider any open set $G \subset \Omega$ which is a finite union of domains from the class $\mathcal{G}_{1/n'}$, bounded connected Lipschitz domains, for instance. Such a set $G$ can be chosen in such a way that $|\Omega \setminus G|$ is arbitrarily small. As~\eqref{eq:inq-ass2} is in force,
 Theorem~\ref{theo:conB1} and Remark \ref{rem:domains} ensure that
\begin{equation}\label{eq:help3}
    \int_{G} B\left(\frac{|f(u_k) - f(u)|}{\lambda}\right) \xrightarrow{k \to \infty} 0
\end{equation}
for $\lambda \geq 24\lambda_c\kappa$. Next, notice that
\begin{align}\label{mar15}
    \int_{\Omega \setminus G} B\left(\frac{|f(u_k) - f(u)|}{\lambda}\right) \d x&\leq \int_{\Omega \setminus G} \left(\Df(\vk|u_k|) + A\left(\frac{|u_k|}{2\lambda_c}\right) + B\left(\frac{|f(u)|}{\vk \lambda_c}\right)\right)\d x\\ \nonumber \quad 
    &\xrightarrow{k \to \infty} \int_{\Omega \setminus G} \left(\Df(\vk|u|) + A\left(\frac{|u|}{2\lambda_c}\right) + B\left(\frac{|f(u)|}{\vk \lambda_c}\right)\right)\d x\,,
\end{align}
where the limit holds owing to Lemmas~\ref{int-conv} and~\ref{lem:convED} for convergence. The integrand in the last integral in \eqref{mar15} belongs to $L^1(\Omega)$. As a consequence, for any $\varepsilon > 0$ there exists $G$ as above such that this integral does not exceed $\varepsilon$. By the arbitrariness of $\varepsilon$ and~\eqref{eq:help3}, we conclude that $\int_{\Omega} B(|f(u_k) - f(u)|/\lambda)\d x \xrightarrow{k \to \infty} 0$, whence $f(u_k) \to f(u)$ modularly in $\sobB$. 
\\
It remains to show the modular convergence of $\nabla(f(u_k))$ to $\nabla(f(u))$ in $L^B(\Omega)$.  Let $G$  be a set as above. Thanks to the assumption ~\eqref{eq:inq-ass2},
 Theorem~\ref{theo:conB1} and Remark \ref{rem:domains} imply that
\begin{equation}\label{eq:help4}
    \int_{G} B\left(\frac{|f'(u_k)\nabla u_k - f'(u)\nabla u|}{\lambda}\right) \xrightarrow{k \to \infty} 0
\end{equation}
 for every $\lambda \geq 24\lambda_c\vk$.
Furthermore,
\begin{align*}
    \int_{\Omega \setminus G} B\left(\frac{|f'(u_k)\nabla u_k - f'(u)\nabla u|}{\lambda}\right) &\leq \int_{\Omega \setminus G} \left(\Df(\vk|u_k|) + A\left(\frac{|\nabla u_k|}{2\lambda_c}\right) + B\left(\frac{|f'(u)\nabla u|}{\vk \lambda_c}\right)\right)\d x\\ \quad
    &\xrightarrow{k \to \infty} \int_{\Omega \setminus G} \left(\Df(\vk|u|) + A\left(\frac{|\nabla u|}{2\lambda_c}\right) + B\left(\frac{|f'(u)\nabla u|}{\vk \lambda_c}\right)\right)\d x\,,
\end{align*}
where the convergence follows from Lemmas~\ref{int-conv} and~\ref{lem:convED}. By the same arguments as in the above proof of the modular convergence 
of $f(u_k)$ to $f(u)$ in $L^B(\Omega)$, one can establish the modular convergence of $\nabla(f(u_k))$ to $\nabla(f(u))$ in $L^B(\Omega)$, and hence of $f(u_k)$ to $f(u)$ in  $\sobB$.
\end{proof}
}
\begin{proof}[Proof of Proposition \ref{rem:norm}, sketched]
 The strengthening of the conclusions of Parts (ii) of Theorems \ref{theo:conB1} and \ref{theo:conB2} provided by this proposition amounts to the fact that if $u_k \to u$ in the modular topology of $W^{1,A}(\Omega)$, then 
$$ \int_{\Omega} B\left( \frac{|f(u_k) - f(u)|}{\lambda}\right)\d x \xrightarrow{k \to \infty} 0 \quad \text{and} \quad \int_{\Omega} B\left( \frac{|\nabla (f(u_k)) - \nabla (f(u))|}{\lambda}\right)\d x \xrightarrow{k \to \infty} 0$$
for every $\lambda >0$, and not just for sufficiently large $\lambda$. The latter restriction on $\lambda$ arises in the proof of 
Theorem~\ref{theo:conB1} from an inequality of the form
    \begin{equation}\label{eq:mar31-1}
        B\left(\frac{C\Cf(\vk|v|)|\nabla v|}{\lambda}\right) \leq c + A_n(\vk |v|) +  A\left(\frac{C|\nabla v|}{\lambda}\right),
    \end{equation}
    which is exploited several times. {The proof of Theorem~\ref{theo:conB1} uses inequality~\eqref{eq:mar31-1} with several constants $C > 0$, but never exceeding $6\vk$.
    \\ As now $E$ is Young function, given constants $C$ and $\lambda$, thanks 
   to the property~\eqref{Alambda} and Lemma~\ref{lem:lem1}, we can  make use of the inequality
    \begin{equation}\label{eq:mar31-2}
        B\left(\frac{C\Cf(\vk|v|)|\nabla v|}{\lambda}\right) \leq B\left(\frac{C\Cf(\vk\tfrac{\Lambda}{\lambda}|v|)|\nabla v|}{\Lambda}\right) \leq c + A_n \left(\vk\tfrac{\Lambda}{\lambda}|v| \right) + A\left(\frac{2C|\nabla v|}{\Lambda}\right)\,,
    \end{equation}
    where $0 < \lambda \leq \Lambda$.
     One can argue as in the proof of Theorem~\ref{theo:conB1}, starting with any $\lambda > 0$ instead of choosing a particular $\lambda$. Then, one can  take $\Lambda = 24\vk\max(\lambda, \lambda_c, \|u\|_{W^{1, A}(\Omega)})$ and replace every use of the inequality~\eqref{eq:mar31-1} with the inequality~\eqref{eq:mar31-2}. This   affects the constants inside the functions $A$ and $A_n$ in the estimates. Nonetheless, the choice of $\Lambda$ shall guarantee the equiintegrability of the families $\left\{ A_n \left(\kappa \frac{\Lambda|u_k|}{\lambda}\right) \right\}$ and $\left\{A \left( \frac{2C|\nabla u_k|}{\Lambda}\right) \right\}$ for any $C \leq 6\kappa$, via Lemmas~\ref{lem:convEAn} and~\ref{int-conv}, respectively. The rest of the proof follows along the same lines.
    \\
    Similarly, if in the proof of Theorem~\ref{theo:conB2}, one replaces the inequality
    \begin{equation*}
        B\left(\frac{C\Cf(\vk|v|)|\nabla v|}{\lambda}\right) \leq \Df(\vk|v|) + A\left(\frac{C|\nabla v|}{\lambda}\right)
    \end{equation*}
    with
    \begin{equation*}
        B\left(\frac{C\Cf(\vk|v|)|\nabla v|}{\lambda}\right) \leq \Df\left(\frac{\vk\Lambda|v|}{\lambda}\right) + A\left(\frac{C|\nabla v|}{\Lambda}\right)\,,
    \end{equation*}
    then one obtains the desired conclusion.}
\end{proof}
{
\begin{proof}[Proof of Corollary \ref{coro:An}]
  The conclusion is a consequence of Theorem \ref{theo:conB2}  and Lemma \ref{LemmaF=An}.   
\end{proof}}

\subsection{Anisotropic spaces}\label{sec:proofs-aniso}
{\color{black}

The outline of the proofs of the results of Section~\ref{sec:aniso} is essentially the same as that of the proofs accomplished in the previous section. We thus limit ourselves to providing sketches of the relevant proofs and outlining the necessary modifications.

\begin{proof}[Proof of Theorem~\ref{theo-conPhi}] 
Part $(i)$.
The proof  follows along the same lines as the proof of Theorem~\ref{theo-conA}, with $A$ replaced by $\Phi$.   The inequalities involved in the proof hold thanks to the convexity of $\Phi$ and the monotonicity of a function $t \mapsto \Phi(t\xi)$ for any $\xi \in \Rn$. Since membership in the space $V_0^{1, \Phi}(\Omega)$ only depends on the gradient of  functions, the lines~\eqref{eq:apr2} and~\eqref{eq:mar28} have to be skipped, whereas $\lambda_c$ in~\eqref{eq:lambdac} must be chosen so that $\lambda_c \geq \|u\|_{V^{1, \Phi}_0(\Omega)}$ and
\begin{equation*}
    \int_{\Omega} \Phi \left( \frac{\nabla u_k - \nabla u}{\lambda_c} \right) \d x \xrightarrow{k \to \infty} 0\,.
\end{equation*}
The embedding
 $V_0^{1, \Phi}(\Omega)\to W^{1, 1}(\Omega)$ if $|\Omega|<\infty$ also plays a role in the proof.
\\
Part $(ii)$. The embedding \eqref{apr60}
ensures that any sequence $\{u_k\}$ converging modularly in $V^{1, \Phi}_0(\Omega)$ satisfies the bound $\|u_k\|_{L^{\infty}(\Omega)} \leq M$ for some $M > 0$ and every $k$. This fact allows one to replace the function $f$ with a Lipschitz continuous function $f_M$, defined as in~\eqref{fM}, and make use of Part $(i)$.
    
\end{proof}}
The proof of Theorem~\ref{theo:conPhi0} is similar to those of Theorems~\ref{theo:conB1} and~\ref{theo:conB2}. It relies upon Lemmas~\ref{lem:Phi-Psi} and~\ref{lem:Phin-conv}, instead of their isotropic versions Lemmas~\ref{lem:lem1} and~\ref{lem:convEAn}. 
\begin{proof}[Proof of Theorem~\ref{theo:conPhi0}]
Part  $(i)$. The argument is the same  as in the proof of Part $(ii)$ of Theorem~\ref{theo:conB1}. One has simply to replace $A$ with $\Phi$,  $A_n$ with $\Phi_n$, $B$ with $\Psi$, Lemma~\ref{lem:lem1} with Lemma~\ref{lem:Phi-Psi}, and Lemma~\ref{lem:convEAn} with Lemma~\ref{lem:Phin-conv}. As the spaces $V_0^{1, \Phi}(\Omega)$ and $V_0^{1, \Psi}(\Omega)$ are only defined via the gradient of  functions, equations ~\eqref{eq:mar28-2} and~\eqref{eq:apr8}-\eqref{eq:mar28-3} have to be skipped. Moreover, $\lambda_c$ in~\eqref{eq:mar28-lambdac} has to chosen so that $\lambda_c \geq \|u\|_{V_0^{1, \Phi}(\Omega)}$ and
    \begin{equation*}
        \int_{\Omega} \Phi \left( \frac{\nabla u_k - \nabla u}{\lambda_c} \right) \d x \xrightarrow{k \to \infty} 0\,.
    \end{equation*}
    \\ Part  $(ii)$. By Part  $(i)$, given any bounded open set $G \subseteq \Omega$ and any $\lambda \geq 24\lambda_c \vk$ we have that
    \begin{equation}\label{eq:mar28-4}
    \int_{U} \Psi\left(\frac{f'(u_k)\nabla u_k - f'(u)\nabla u}{\lambda}\right) \xrightarrow{k \to \infty} 0\,.
\end{equation}
On the other hand, by Lemma~\ref{lem:Phi-Psi} with $c = 0$, 
 \begin{align}\label{mar28-5}
    \int_{\Omega \setminus G} \Psi\left(\frac{f'(u_k)\nabla u_k - f'(u)\nabla u}{\lambda}\right) \leq \int_{\Omega \setminus G} \left(\Phi_n(\vk|u_k|) + \Phi\left(\frac{\nabla u_k}{2\lambda_c}\right) + \Phi_n(\vk|u|) + \Phi\left(\frac{\nabla u}{2\lambda_c}\right)\right)&\d x \nonumber\\
    \xrightarrow{k \to \infty} 2\int_{\Omega \setminus G} \left(\Phi_n(\vk|u|) + \Phi\left(\frac{\nabla u}{2\lambda_c}\right)\right)&\d x\,,
\end{align}
where the limit follows from Lemmas~\ref{int-conv} and~\ref{lem:Phin-conv}. The set $G$ can be chosen so that the right-hand side of~\ref{mar28-5} is arbitrarily small. Coupling this fact with~\eqref{eq:mar28-4} yields the conclusion.
\end{proof}
\begin{proof}[Proof of Corollary \ref{coro:ortho}]
   Part $(i)$. Using the assumption ~\eqref{eq:ass-ortho} and  arguing as in Lemma~\ref{lem:lem1} enable us to deduce that there exists a constant $c$ such that 
    \begin{equation*}
        B_i(\Cf(s)t/2) \leq c + \Phi_n(s) + A_i(t) \quad \text{for  $s, t \geq 0$.}
    \end{equation*}
    Hence,
    \begin{equation*}
        \Psi(\Cf(s)\xi/2) = \sum_{i=1}^{n} B_i(\Cf(s)|\xi_i|/2) \leq \sum_{i=1}^{n} \left(c + \Phi_n(s) + A_i(|\xi_i|)\right) = nc + n\Phi_n(s) + \Phi(\xi)
    \end{equation*}
    for  $\xi \in \Rn$ and  $s \geq 0$.
    Therefore,
    \begin{equation*}
        \Psi(\xi/2) \leq nc + n\Phi_n(s) + \Phi\Big(\frac{\xi}{\Cf(s)}\Big)\,,
    \end{equation*}
    whence, in particular,
        \begin{equation*}
        \Psi(\xi/2) \leq nc + n\Phi_n(\vartheta(\xi)) + \Phi\left(\frac{\xi}{\Cf(\vartheta(\xi))}\right) = nc + (n+1)\Phi\left(\frac{\xi}{\Cf(\vartheta(\xi))}\right).
    \end{equation*}
   In conclusion, we have shown that
    the condition ~\eqref{eq:assumpt1} holds with $\Psi (\xi)$ replaced with the equivalent $n$-dimensional Young function
    $\frac{1}{n+1}\Psi(\xi/2)$.
The conclusion follows via an application of Theorem~\ref{theo:conPhi0}.
\\ Part $(ii)$. The proof is analogous to that of Part $(i)$. One has just to choose  $c = 0$ in the equations above.
\\ The assertion concerning the case when ~\eqref{apr51} is in force is a straightforward consequence of Theorem~\ref{theo-conPhi}, Part $(ii)$.
\end{proof}

\bigskip{}{}
{

 \par\noindent {\bf Data availability statement.} Data sharing not applicable to this article as no datasets were generated or analysed during the current study.

\section*{Compliance with Ethical Standards}\label{conflicts}

\smallskip
\par\noindent
{\bf Funding}. This research was partly funded by:
\\
(i) Polish National Science Center, grant number 2019/34/E/ST1/00120 (M. Borowski);
\\
(ii)
 Initiative of Excellence at the University of Warsaw, grant number not available (M. Borowski);
\\ (iii) GNAMPA   of the Italian INdAM - National Institute of High Mathematics (grant number not available)  (A. Cianchi);
\\ (iv) Research Project   of the Italian Ministry of Education, University and
Research (MIUR) Prin 2017 ``Direct and inverse problems for partial differential equations: theoretical aspects and applications'',
grant number 201758MTR2 (A. Cianchi);
\\ (v) Research Project   of the Italian Ministry of Education, University and
Research (MIUR) Prin 2022 ``Partial differential equations and related geometric-functional inequalities'',
grant number 20229M52AS, cofunded by PNRR (A. Cianchi).

\bigskip
\par\noindent
{\bf Conflict of Interest}. The authors declare that they have no conflict of interest.}

\printbibliography
\end{document}